\documentclass[smallextended,referee,envcountsect,]{svjour3}
\smartqed

\usepackage{graphicx,psfrag,amsfonts,verbatim,  mathtools}
\usepackage{xurl}

\usepackage{lipsum}
\usepackage{amsfonts}
\usepackage{graphicx}
\usepackage{epstopdf}
\ifpdf
  \DeclareGraphicsExtensions{.eps,.pdf,.png,.jpg}
\else
  \DeclareGraphicsExtensions{.eps}
\fi

\usepackage{bbm}
\usepackage{amsthm}
\usepackage{amssymb}
\usepackage{enumitem}

\usepackage{dsfont}
\usepackage[caption=false]{subfig}

\usepackage{ upgreek } %

\usepackage[bbgreekl]{mathbbol}
\DeclareSymbolFontAlphabet{\mathbbm}{bbold}
\DeclareSymbolFontAlphabet{\mathbb}{AMSb}%
\usepackage[bb=px]{mathalfa}
\usepackage{bm}

\usepackage{diagbox}
\usepackage{booktabs}           %
\usepackage{adjustbox} %

\usepackage{tikz}
\usetikzlibrary{fit, arrows, shapes, positioning, shadows, matrix, calc}

\usepackage{algorithm2e} 

\RequirePackage{luatex85}
\usepackage{pgfplots}
\pgfplotsset{compat=newest}
\usepgfplotslibrary{groupplots}
\usepgfplotslibrary{polar}
\usepgfplotslibrary{smithchart}
\usepgfplotslibrary{statistics}
\usepgfplotslibrary{dateplot}
\interfootnotelinepenalty=10000

\usetikzlibrary{spy,backgrounds}

\global\long\def\rl{\mbox{{\bf R}}}%

\global\long\def\eu{\mathbf{E}}%

\global\long\def\ig{\mbox{{\bf Z}}}%

\global\long\def\nt{\mbox{{\bf N}}}%

\global\long\def\tr{\mathop{{\bf tr}}}%

\global\long\def\zer{\mathop{{\bf zer}}}%

\global\long\def\prox{\mathbf{prox}}%

\global\long\def\argmin{\mathop{{\rm argmin}}}%

\global\long\def\gra{\mathop{{\bf gra}}}%

\global\long\def\fix{\mathop{\mathop{{\bf fix}}}}%

\global\long\def\rank{\mathop{{\bf rank}}}%

\global\long\def\card{\mathop{{\bf card}}}%

\global\long\def\proj{\mathop{{\bf \Pi}_\mathcal{X}}}%

\global\long\def\ones{\mathbf{1}}%

\global\long\def\regf{f}%

\global\long\def\ind{\iota_\mathcal{X}}%

\global\long\def\morind{\prescript{\mu}{}{\iota}}%

\global\long\def\regind{\prescript{\mu}{}{\mathcal{I}}}%

\global\long\def\opA{\mathbb{A}}%

\global\long\def\opT{\mathbb{T}}%

\global\long\def\opJ{\mathbb{J}}%

\global\long\def\opR{\mathbb{R}}%

\global\long\def\id{\mathbb{I}}%

\theoremstyle{plain}
\newtheorem{lem}{\protect\lemmaname}
\theoremstyle{plain}

\theoremstyle{definition}
\newtheorem{defn}{\protect\definitionname}
\theoremstyle{plain}
\newtheorem{prop}{\protect\propositionname}
\theoremstyle{remark}
\newtheorem{rem}{\protect\remarkname}
\theoremstyle{plain}
\newtheorem{thm}{\protect\theoremname}
\theoremstyle{plain}

\newtheorem{assumption}{Assumption}

\journalname{}

\begin{document}

\title{Exterior-point Optimization for Sparse and Low-rank Optimization}

\author{Shuvomoy Das Gupta, Bartolomeo Stellato, and Bart P.G. Van Parys}

\institute{Shuvomoy Das Gupta \at
             Operations Research Center \\
             Massachusetts Institute of Technology\\
             sdgupta@mit.edu
           \and
              Bartolomeo Stellato  \at
              Department of Operations Research and Financial Engineering \\
              Princeton University \\
              bstellato@princeton.edu
            \and
            Bart P.G. Van Parys \\
            Sloan School of Management \\
            Massachusetts Institute of Technology \\
            vanparys@mit.edu
}

\date{}

\maketitle

\begin{abstract}
{Many problems of substantial current interest in machine learning, statistics, and data science can be formulated as sparse and low-rank optimization problems. In this paper, we present the nonconvex exterior-point optimization solver \textsf{(NExOS)}---a first-order algorithm tailored to sparse and low-rank optimization problems. We consider the problem of minimizing a convex function over a nonconvex constraint set, where the set can be decomposed as the intersection of a compact convex set and a nonconvex set involving sparse or low-rank constraints. Unlike the convex relaxation approaches, \textsf{NExOS} finds a locally optimal point of the original problem by solving a sequence of penalized problems with strictly decreasing penalty parameters by exploiting the nonconvex geometry. \textsf{NExOS} solves each penalized problem by applying a first-order algorithm, which converges linearly to a local minimum of the corresponding penalized formulation under regularity conditions. Furthermore, the local minima of the penalized problems converge to a local minimum of the original problem as the penalty parameter goes to zero. We then implement and test \textsf{NExOS} on many instances from a wide variety of sparse and low-rank optimization problems, empirically demonstrating that our algorithm outperforms specialized methods.}
\end{abstract}

\vspace{5mm} %
\noindent
\textit{Communicated by Cl\'{e}ment W. Royer}

\keywords{nonconvex optimization, sparse optimization, low-rank optimization, first-order algorithms}
\subclass{65K05 \and  90C30}

\section{Introduction\label{sec:Introduction}}

{
This paper studies optimization problems involving a strongly convex and smooth cost
function over a closed nonconvex constraint set $\mathcal{X}$ involving sparse or low-rank constraints. We propose
a first-order algorithm \emph{nonconvex exterior-point optimization
solver} (\textsf{NExOS}) to solve such problems numerically. We can
write such problems as: 
\begin{equation}
\begin{array}{ll}
\underset{}{\mbox{minimize}} & f(x)+(\beta/2)\|x\|^{2}\\
\textrm{subject to} & x\in\mathcal{X},
\end{array}\tag{\ensuremath{\mathcal{P}}}\label{eq:original_problem-1}
\end{equation}
where $x$ takes value in a finite-dimensional
vector space $\eu$ over the reals, $f$ is a strongly convex and  smooth function. In Appendix \ref{subsec:nonsmooth_NExOS}, we generalize our framework to the case when $f$ is non-smooth convex.

The regularization
parameter $\beta>0$ is commonly introduced in statistics and machine learning problems to reduce the generalization error without increasing the
training error \cite[\S 5.2.2]{Goodfellow-et-al-2016}. In this paper, there is also a theoretical consideration behind including the term $\frac{\beta}{2} \|x\|^2$ in problem \eqref{eq:original_problem-1}. \textsf{NExOS} finds a locally optimal point of problem \eqref{eq:original_problem-1} by solving a sequence of penalized subproblems with strictly decreasing penalty parameters, where each penalized subproblem is solved by a first-order algorithm. Under the presence of $\frac{\beta}{2} \|x\|^2$ with $\beta > 0$, we can prove that each penalized subproblem is locally strongly convex and smooth admitting a unique local minimum (see Proposition \ref{Attainment-of-local-min}), which in turn ensure linear convergence of the first-order method to that local minimum (see Theorem \ref{Theorem_Main-convergence-result}). In the Numerical Experiments section, we demonstrate that $\beta$ can be set to a value as small as $10^{-8}$. This empirical evidence suggests that, in practice, the impact of $\beta$ on the objective value can be made negligible, yet one can still reap the theoretical benefits. Therefore, while $\beta$ plays a crucial role in the theoretical aspects of our algorithm, its influence on the problems considered in the Numerical Experiments section is minimal and can be adjusted as per the problem's requirements. 

Furthermore, $\eu$ is equipped
with inner product $\left\langle \cdot\mid\cdot\right\rangle $ and
norm $\|\cdot\|=\sqrt{\left\langle x\mid x\right\rangle }$. For  $\eu = \mathbf{R}^d$, we have $\left\langle x \mid y \right\rangle = x^\top y$ for  $x,y \in \mathbf{R}^d$, and for $\eu = \mathbf{R}^{m \times n}$, we have $\left\langle X \mid Y \right\rangle = \tr (X^\top Y)$,  for $X, Y \in \mathbf{R}^{m \times n}$.  The constraint set $\mathcal{X}$ is closed and nonconvex and can be decomposed as the intersection of a compact convex set and a nonconvex set involving sparse or low-rank constraints. Sparse and low-rank constraint sets are very important in modeling many machine learning problems,  because they allow for high interpretability, speed-ups in computation, and reduced memory requirements~\cite{jain2017non}.

\paragraph{Sparsity-constrained optimization}
Sparsity constraints have found applications in many
practical settings, \emph{e.g.}, gene expression analysis \cite[pp. 2--4]{Hastie2015},
sparse regression \cite[pp. 155--157]{jain2017non}, signal transmission
and recovery \cite{Candes2008,Tropp2006}, hierarchical sparse polynomial
regression \cite{Bertsimas2020}, and best subset selection \cite{bertsimas2016best},
just to name a few. In these problems, the constraint set $\mathcal{X}$
decomposes as $\mathcal{X}=\text{\ensuremath{\mathcal{C}}}\bigcap\mathcal{N}$,
where $\mathcal{C}$ is a compact convex set, and 
\begin{equation}
\mathcal{N}=\{x\in\rl^{d}\mid\card(x)\leq k\},\label{eq:sparse-set}
\end{equation}
where $\mathbf{card}(x)$ counts the number of nonzero elements in
$x$. In these optimization problems, $\mathcal{C}$ can be a polyhedron, infinity-norm ball, box constraint set, or probability simplex; these sets usually show up in applications involving econometrics, housing price prediction, air-quality prediction, signal processing, and meteorology \cite{bertsimas2021slowly,Diamond2018,bertsimas2020sparse,bertsimas2022scalable}. 

In this paper, we apply \textsf{NExOS
}to solve the sparse regression problem for both synthetic and real-world
datasets in \S\ref{subsec:Regressor-selection}, which is concerned
with approximating a vector $b\in\mathbf{R}^{m}$ with a linear combination
of at most $k$ columns of a matrix $A\in\mathbf{R}^{m\times d}$
with bounded coefficients. This problem has the form: 
\begin{equation}
\begin{array}{ll}
\textup{minimize} & \|Ax-b\|_{2}^{2}+(\beta/2)\|x\|_{2}^{2}\\
\textup{subject to} & \mathbf{card}(x)\leq k, \quad \|x\|_{\infty}\leq\Gamma,
\end{array}\tag{SR}\label{eq:reg-sel}
\end{equation}
where $x\in\mathbf{R}^{d}$ is the decision variable, and $A\in\mathbf{R}^{m\times d},\;b\in\mathbf{R}^{m},$
and $\Gamma>0$ are problem data.

\paragraph{Low-rank optimization}
We can write low-rank optimization problems in
the form of problem \eqref{eq:original_problem-1}, which are common in machine learning applications such as collaborative filtering
\cite[pp. 279-281]{jain2017non}, design of online recommendation
systems \cite{Mazumder2010,Candes2009}, bandit optimization \cite{jun2019bilinear},
data compression \cite{Gress2014,Mikolov2013,Srikumar2014}, and low
rank kernel learning~\cite{Bach2013}.
In these applications,
the constraint set $\mathcal{X}$ decomposes as $\mathcal{X}=\text{\ensuremath{\mathcal{C}}}\bigcap\mathcal{N}$,
where $\mathcal{C}$ is a compact convex set, and 
\begin{equation}
\mathcal{N}=\{X\in\rl^{m\times d}\mid\rank(X)\leq r\}. \label{eq:low-rank-set}
\end{equation}

In these optimization problems, $\mathcal{C}$ can be matrix-norm ball, Frobenius-norm ball, hyperplane/half-space induced by trace \cite{bertsimas2020mixed,bertsimas2023optimal}.
 In this paper,
we apply \textsf{NExOS }to solve the affine rank minimization problem:
\begin{equation}
\begin{array}{ll}
\textup{minimize} & \left\Vert \mathcal{A}(X)-b\right\Vert _{2}^{2}+(\beta/2)\|X\|_{F}^{2}\\
\textup{subject to} & \mathop{{\bf rank}}(X)\leq r, \quad \|X\|_{2}\leq\Gamma,
\end{array}\tag{RM}\label{eq:lrm-estimation}
\end{equation}
where $X\in\mathbf{R}^{m\times d}$ is the decision variable, $b\in\mathbf{R}^{k}$
is noisy measurement data, and $\mathcal{A}:\mathbf{R}^{m\times d}\to\mathbf{R}^{k}$
is a linear map. The parameter $\Gamma>0$ is the upper bound
for the spectral norm of $X$. The affine map $\mathcal{A}$ is
determined by $k$ matrices $A_{1},\ldots,A_{k}$ in $\mathbf{R}^{m\times d}$
where $\mathcal{A}(X)=(\mathbf{tr}(A_{1}^{T}X),\ldots,\mathbf{tr}(A_{k}^{T}X)).$
We present several numerical experiments to solve \eqref{eq:lrm-estimation}
using \textsf{NExOS }for both synthetic and real-world datasets
in \S\ref{subsec:Affine-rank-minimization}.

\subsection{Related work\label{subsec:Related-work}}

\paragraph{Convex relaxation approach} Due to the presence of the nonconvex set $\mathcal{X},$ the nonconvex problem \eqref{eq:original_problem-1} is $\mathcal{NP}$-hard \cite{Hardt2014}.
A common way to deal with this issue is to avoid this inherent nonconvexity
altogether by convexifying the original problem. The relaxation of
the sparsity constraint leads to the popular \textsf{LASSO} formulation
and its variants \cite{Hastie2015}, whereas relaxation of the
low-rank constraints produces the nuclear norm based convex models
\cite{Fazel2008}. 

The basic advantage of the convex relaxation technique
is that, in general, a globally optimal solution to a convex problem
can be computed reliably and efficiently \cite[\S 1.1]{boyd2004convex},
whereas for nonconvex problems a local optimal solution is often the
best one can hope for. Furthermore, if certain statistical assumptions
on the data generating process hold, then it is possible to recover
exact solutions to the original nonconvex problems with high probability
by solving the convex relaxations (see \cite{Hastie2015} and the
references therein). 

However, when stringent assumptions do not
hold, then solutions to the convex formulations can be of poor quality
and may not scale very well \cite[\S 6.3 and \S 7.8]{jain2017non}.
In this situation, the nonconvexity of the original problem must be
confronted directly, because such nonconvex formulations capture the
underlying problem structures more accurately than their convex counterparts. 

\paragraph{First-order methods}
To that goal, first-order algorithms such as hard thresholding algorithms, \emph{e.g.}, \textsf{IHT} \cite{blumensath2008iterative}, \textsf{NIHT} \cite{blumensath2010normalized}, \textsf{HTP} \cite{foucart2011hard}, \textsf{CGIHT} \cite{blanchard2015cgiht}, address nonconvexity
in sparse and low-rank optimization by implementing variants of projected
gradient descent with projection taken onto the sparse and/or low-rank
set.

While these first-order methods have been successful in recovering low-rank
and sparse solutions in underdetermined linear systems, they too require
assumptions on the data such as the \emph{restricted isometry property
}for recovering true solutions \cite[\S 7.5]{jain2017non}. Furthermore,
to converge to a local minimum, hard thresholding algorithms require
the spectral norm of the measurement matrix to be less than one, which
is a restrictive condition \cite{blumensath2008iterative}. 

Besides
hard thresholding algorithms, heuristics based on first-order algorithms
such as the alternating direction method of multipliers \textsf{(ADMM)}
have gained a lot of traction in the last few years. 
Though \textsf{ADMM}
was originally designed to solve convex optimization problems,
since
the idea of implementing this algorithm as a general purpose heuristic
to solve nonconvex optimization problems was introduced in \cite[\S 9.1-9.2]{Boyd2011}, \textsf{ADMM}-based heuristics have been applied successfully to approximately
solve nonconvex problems in many different application areas \cite{Takapoui2017,Diamond2018}.

However, the biggest drawback of these heuristics based on first-order methods comes from the fact that
they take an algorithm designed to solve convex problems and apply
it verbatim to a nonconvex setup. As a result, these algorithms often
fail to converge, and even when they do, it need not be a local minimum,
let alone a global one \cite[\S 2.2]{Takapoui2017a}. Also,
empirical evidence suggests that the iterates of these algorithms
may diverge even if they come arbitrarily close to a locally optimal
solution during some iteration. 
The main reason is that these heuristics do
not establish a clear relationship between the local minimum of problem \eqref{eq:original_problem-1}
and the fixed point set of the underlying operator that controls the
iteration scheme. An alternative approach that has been quite successful
empirically in finding low-rank solutions is to consider an unconstrained
problem with Frobenius norm penalty and then using alternating minimization
to compute a solution \cite{UdellGlrm}. However, the alternating
minimization approach may not converge to a solution and should be
considered a heuristic \cite[\S 2.4]{UdellGlrm}. 

\paragraph{Discrete optimization approach}
 For these reasons above, in the last few years, there has been significant
interest in addressing the nonconvexity present in many optimization problems
directly via a discrete optimization approach. In this way, a particular
nonconvex optimization problem is formulated exactly using discrete
optimization techniques and then specialized algorithms are developed
to find a certifiably optimal solution. This approach has found considerable
success in solving machine learning problems with sparse and low-rank
optimization \cite{BertsimasMLLens,tillmann2021cardinality}. A mixed
integer optimization approach to compute near-optimal solutions for
sparse regression problem, where problem dimension $d=1000$, is computed in \cite{bertsimas2016best}.
In \cite{bertsimas2020sparse}, the authors propose a cutting plane
method for a similar problem, which works well with mild sample correlations
and a sufficiently large dimension. In \cite{hazimeh2020fast}, the
authors design and implement fast algorithms based on coordinate descent
and local combinatorial optimization to solve sparse regression problem
with a three-fold speedup where $d\approx10^{6}$. In \cite{bertsimas2020mixed},
the authors propose a framework for modeling and solving low-rank
optimization problems to certifiable optimality via symmetric projection
matrices. 

However, the runtime of these discrete optimization based algorithms can often become
prohibitively long as the problem dimensions grow \cite{bertsimas2016best}.
Also, these discrete optimization algorithms have efficient implementations
only for a narrow class of loss functions and constraint sets; they
do not generalize well if a minor modification is made to the problem
structure, and in such a case they often fail to find a solution point
in a reasonable amount of time even for smaller dimensions \cite{BertsimasMLLens}.
Furthermore, one often relies on commercial softwares, such as \texttt{Gurobi},
\texttt{Mosek}, or \texttt{Cplex} to solve these discrete optimization
problems, thus making the solution process somewhat opaque \cite{bertsimas2016best,tillmann2021cardinality}.

\subsection{Contributions}
The main contribution of this work is to propose \textsf{NExOS}: a first-order algorithm
tailored for nonconvex optimization problems of the form \eqref{eq:original_problem-1}. The term \emph{exterior-point} originates from the fact that the iterates
approach a local minimum from outside of the feasible region; it is
inspired by the convex exterior-point method first proposed by Fiacco
and McCormick in the 1960s \cite[\S 4]{fiacco1990nonlinear}. By exploiting the underlying geometry of the constraint set, we construct an iterative
method that finds a locally optimal point of the original problem via an outer loop consisting of increasingly accurate penalized
formulations of the original problem by reducing only one penalty parameter. Each penalized problem is then solved by
applying an inner algorithm that implements a variant of the Douglas-Rachford splitting
algorithm. 

We prove that \textsf{NExOS}, besides avoiding the drawbacks of convex
relaxation and discrete optimization approach, has the following favorable features. First, the penalized problem has strong convexity
and smoothness around local minima, but can be made arbitrarily close
to the original nonconvex problem by reducing the penalty parameter.
Second, under mild regularity conditions, the inner algorithm finds local minima for the penalized problems
at a linear convergence rate, and as the penalty parameter goes to
zero, the local minima of the penalized problems converge to a local
minimum of the original problem. Furthermore, we show that, when those regularity conditions 
do not hold, the inner algorithm is still guaranteed to subsequentially converge to a first-order stationary point of the penalized problem at the 
rate $o(1/\sqrt{k})$.

We implement \textsf{NExOS} in the open-source \texttt{Julia} package
\texttt{NExOS.jl} and test it extensively on many synthetic and real-world instances of different
nonconvex optimization problems of substantial current interest. We demonstrate that \textsf{NExOS} very quickly computes
solutions that are competitive with or better than specialized algorithms on various performance measures. 
\texttt{NExOS.jl}
is available at \url{https://github.com/Shuvomoy/NExOS.jl}. %

}

\paragraph{Organization of the paper}

The rest of the paper is organized as follows. We describe our \textsf{NExOS} framework in \S\ref{subsec:our_approach}. We provide convergence
analysis of the algorithm in \S\ref{sec:Convergence-analysis}. Then
we demonstrate the performance of our algorithm on several nonconvex
optimization problems of significant current interest in \S\ref{sec:Numerical-experiments}.
The concluding remarks are presented in \S\ref{sec:Conclusion}.

\section{Our approach\label{subsec:our_approach}}

 The
backbone of our approach is to address the nonconvexity by working
with an asymptotically exact nonconvex penalization of problem \eqref{eq:original_problem-1},
which enjoys local convexity around local minima. We use the notation $\iota_\mathcal{X}(x)$ that denotes the indicator function of the set $\mathcal{X}$ at $x$, which is $0$
if $x\in\mathcal{X}$ and $\infty$ else. Using this, we can write problem \eqref{eq:original_problem-1} as an unconstrained optimization problem, where the objective is $f(x)+(\beta/2)\|x\|^{2}+\iota_\mathcal{X}(x)$. In our penalization,
we replace the indicator function $\ind$ with its \emph{Moreau envelope
}with positive parameter $\mu$:
\begin{align}
\morind(x) & =\min_{y}\{\ind(y)+(1/2\mu)\|y-x\|^{2}\}=(1/2\mu)d^{2}(x),\label{eq:samaja}
\end{align}
where $d(x)$ is the Euclidean distance of the point $x$ from the
set $\mathcal{X}$. 

{
\paragraph{Properties of Moreau envelope for a nonconvex set. }

The function $\morind$, though nonconvex, has many desirable attributes
that greatly simplify design and convergence analysis of our algorithm.
We summarize these properties below;  See \cite[Proposition 12.9]{Bauschke2017} for the first four properties,
and Proposition \ref{Attainment-of-local-min} in \S\ref{sec:Convergence-analysis}
for the last one.  
\begin{enumerate}
\item \textbf{Bounded.} The function $\morind$ is bounded on every compact
set. In contrast, $\ind$ is an extended valued function that takes
the value $+\infty$ outside the set $\mathcal{X}$. 
\item \textbf{Finite and jointly continuous.} For every $\mu>0$ and $x\in\eu$,
the function $\morind(x)$ is jointly continuous and finite. Therefore,
$\morind$ is continuous on $\eu$. In contrast, the indicator function
$\ind$ is not continuous. 
\item \textbf{Accuracy of approximation controlled by $\mu.$ }With decreasing
$\mu,$ the approximation $\morind$ monotonically increases to $\ind$,
\emph{i.e.}, for any positive $\mu_{1},\mu_{2}$ such that $0\leq\mu_{1}\leq\mu_{2}$,
we have 
\[
0\leq\prescript{\mu_{2}}{}{\ind}(x)\leq\prescript{\mu_{1}}{}{\ind}(x)\leq\ind(x)
\]
for any $x\in\mathbf{E}$. 
\item \textbf{Asymptotically equal to $\ind$.} The approximation $\morind$
is asymptotically equal to $\ind$ as $\mu$ goes to zero, \emph{i.e.},
we have the point-wise limit 
\[
\lim_{\mu\downarrow0}\morind(x)=\ind(x)
\]
for all $x\in E$. 
\item \textbf{Local convexity and differentiability around points of interest.}
Adding any quadratic regularizer to $\morind$ makes the sum locally
convex and differentiable around points of interest. To be precise,
if at $x$, the set $\mathcal{X}$ is prox-regular, then for any value
of $\beta>0,$ the function $\morind(x)+\frac{\beta}{2}\|x\|^{2}$
is convex and differentiable on a neighborhood of $x$. 
\end{enumerate}

The favorable features of $\morind$ motivate us
to consider the following penalization formulation of problem \eqref{eq:original_problem-1} denoted by problem \eqref{eq:smoothed-opt}, where the subscript $\mu$ indicates the penalty parameter:}
\begin{equation}
\begin{array}{ll}
\textup{minimize} & f(x)+\regind(x),\end{array}\tag{\ensuremath{\mathcal{P}_{\mu}}}\label{eq:smoothed-opt}
\end{equation}
where $\regind\equiv\morind+(\beta/2)\|\cdot\|^{2}$, $x\in\eu$ is the decision variable, and $\mu$ is a positive
\emph{penalty parameter}. We call the cost function in problem \eqref{eq:smoothed-opt}
an \emph{exterior-point minimization function}; the term is inspired
by \cite[\S 4.1]{fiacco1990nonlinear}. The notation $\regind\equiv\morind+(\beta/2)\|\cdot\|^{2}$
introduced in problem \eqref{eq:smoothed-opt} not only reduces notational
clutter, but also alludes to a specific way of splitting the objective
into two summands $f$ and $\regind$, which will ultimately allow
us to establish convergence of our algorithm in \S\ref{sec:Convergence-analysis}.
Because $\morind$ is an asymptotically exact approximation of $\iota_\mathcal{X}$
as $\mu\to0$, solving problem \eqref{eq:smoothed-opt} for a small enough
value of the penalty parameter $\mu$ suffices for all practical purposes.

\begin{figure}
\begin{centering}
\includegraphics[scale=0.9]{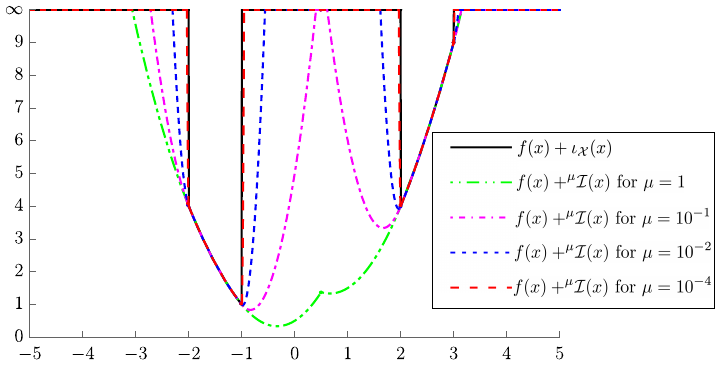}\caption{An illustration of how the penalized cost function in problem \eqref{eq:smoothed-opt} compares against the original cost function in problem \eqref{eq:original_problem-1}
for different values of $\mu$.  Note that the regularization parameter $\beta$ is kept fixed at its initial value $1$ throughout.  \label{fig:An-illustration-of-exterior-point-penalty-function} }
\par\end{centering}
\end{figure}

{
To provide intuition on how the exterior-point minimization function
in problem \eqref{eq:smoothed-opt} compares against the original minimization
function in problem \eqref{eq:original_problem-1}, we provide an illustrative
one-dimensional example in Figure \ref{fig:An-illustration-of-exterior-point-penalty-function}.
Figure \ref{fig:An-illustration-of-exterior-point-penalty-function}
captures all the key properties of our penalization scheme. In this figure, $f=(1/2)(\cdot)^{2}$,
$\beta=1$, $\mathcal{X}=[-2,-1]\bigcup[2,3]$. The problem has two local minima, one
at $-1$ and one at $-2$. We see that for larger values of $\mu$, problem \eqref{eq:smoothed-opt} is not a good approximation of problem \eqref{eq:original_problem-1},
but around each local minimum there is a relatively large region where
$f+\regind$ is strongly convex and smooth. As $\mu$ gets smaller,
problem \eqref{eq:smoothed-opt} becomes a more accurate approximation of 
problem \eqref{eq:original_problem-1}, though the regions of convexity and
smoothness around local minima shrink. For $\mu=10^{-4},$ problem \eqref{eq:smoothed-opt}
is identical to problem \eqref{eq:original_problem-1} for all practical purposes.  Note that the regularization parameter $\beta$ is kept fixed at its initial value $1$ throughout.

}

Now that we have intuitively justified intuition behind working with \eqref{eq:smoothed-opt},
we are in a position to present our algorithm.

\begin{algorithm} 
\hrule
\hrule
\medskip
 \DontPrintSemicolon    
\SetKwInput{KwIn}{given} 
\SetKwInput{KwOut}{output}    
\KwIn{regularization parameter $\beta>0$, an initial point $z_\textup{init}$, initial penalty parameter $\mu_\textup{init}$, minimum penalty parameter $\mu_\textup{min}$, tolerance for the fixed point gap $\epsilon$ for each inner iteration, tolerance for stopping criterion $\delta$ for the outer iteration, and multiplicative factor $\rho \in (0,1)$. }   
\hrule
\it{Initialization}. {$\mu \coloneqq \mu_\textup{init}$, and $z^0 \coloneqq z_\textup{init}$.} \;  
\it{Outer iteration}. \While{stopping criterion is not met} %
{    
\it{Inner iteration.} 
\rm{Using Algorithm \ref{algo:prs}, compute $x_\mu, y_\mu$, and $z_\mu$ that solve problem $(\mathcal{P}_\mu)$ with tolerance $\epsilon$, where $z_{\mu}^0 \coloneqq z^0$ is input as the initial point.}  \; 
\it{Stopping criterion.} 
\textbf{$\textup{quit}$} if \rm{$\lvert\left(f(\proj x_{\mu})+(\beta/2)\|\mathbf{\Pi}_\mathcal{X} x_{\mu}\|^{2}\right)-\left(f(x_{\mu})+\regind(x_{\mu})\right)\rvert\leq\delta$.} 

\it{Set initial point for next inner iteration.} 
\rm{$z^0 \coloneqq z_\mu$.} \; 
\it{Update $\mu$.} 
\rm{$\mu \coloneqq \rho \mu$.}
}   
\Return{$x_\mu, y_\mu,$ \textup{and} $z_\mu$}\;  
\caption{Nonconvex Exterior-point Optimization Solver (\textsf{NExOS}). Here $\proj(x)$ denotes the Euclidean projection of $x$ 
on the nonconvex set $\mathcal{X}$.}  
\label{alg:exterior-point-method}  
\medskip
\hrule
\hrule
\end{algorithm}

\begin{algorithm}  
\hrule
\hrule
\medskip
\DontPrintSemicolon   
\SetKwInput{KwIn}{given} 
\SetKwInput{KwOut}{output}   
\KwIn{starting point $z ^{0}$, tolerance for the fixed point gap $\epsilon$, and proximal parameter $\gamma>0$.}  
\it{Initialization}. {$n \coloneqq 0$, $\kappa  \coloneqq 1/(\beta\gamma+1)$, $\theta  \coloneqq \mu / (\gamma \kappa + \mu)$.}\\
\While{$\| x ^{n} - y ^{n} \| > \epsilon$ }
{    
{Compute $x ^{n+1}  \coloneqq\prox_{\gamma f}\left( z ^{n} \right)$.}\\
{Compute $\tilde{y} ^{n+1}  \coloneqq\kappa\left(2x ^{n+1} -z ^{n} \right)$.}\\
{Compute $y ^{n+1}   \coloneqq\theta \tilde{y} ^{n+1}  +\left(1-\theta\right)\proj\left(\tilde{y} ^{n+1} \right)$.}\\ 
{Compute $z ^{n+1}   \coloneqq z ^{n} +y ^{n+1} -x ^{n+1} $.}\\
{Update $n \coloneqq n+1$.}
}  
\Return{$x ^{n}, y ^{n},$ \textup{and} $z ^{n}$.}\; 
\caption{Inner Algorithm  for problem ($\mathcal{P}_\mu$). Here  $\proj(x)$ denotes the Euclidean projection of $x$ 
on the nonconvex set $\mathcal{X}$, and  $\prox_{\gamma f}$ denotes the proximal operator of $f$ with parameter $\gamma >0$ as defined in \eqref{eq:prox-def}.}  
\label{algo:prs}  
\medskip
\hrule
\hrule
\end{algorithm}

\paragraph{Algorithm description}

Algorithm \ref{alg:exterior-point-method} outlines \textsf{NExOS}.
The main part is an outer loop that solves a sequence of penalized
problems of the form problem \eqref{eq:smoothed-opt} with strictly decreasing
penalty parameter $\mu$, until the termination criterion is met,
at which point the exterior-point minimization function is a sufficiently
close approximation of the original cost function. For each $\mu$,
problem \eqref{eq:smoothed-opt} is solved by an inner algorithm, denoted
by Algorithm \ref{algo:prs}. 

{
One can derive Algorithm \ref{algo:prs}
by applying Douglas-Rachford splitting (\textsf{DRS}) \cite[page 401]{Bauschke2017}
to problem \eqref{eq:smoothed-opt} as follows. If we apply
Douglas-Rachford splitting \cite[page 401]{Bauschke2017} to problem (\ref{eq:smoothed-opt})
with penalty parameter $\mu$, we have the following variant with
three sub-iterations:
\begin{equation}
\begin{aligned}x^{n+1} & =\prox_{\gamma\regf}\left(z^{n}\right)\\
y^{n+1} & =\prox_{\gamma\regind}\left(2x^{n+1}-z^{n}\right)\\
z^{n+1} & =z^{n}+y^{n+1}-x^{n+1}.
\end{aligned}
\tag{DRS}\label{eq:ADMM_orig-1-1}
\end{equation}

The computational
cost for $\prox_{\gamma\regind}$ is the same as computing a projection
onto the constraint set $\mathcal{X}$, as stated in Lemma \ref{Lem:computing_complicated_projection}
below; this result follows from \cite[Theorem 6.13, Theorem 6.63]{beck2017first}.
It should be noted that \cite[Theorem 6.13, Theorem 6.63]{beck2017first}
assume convexity of the functions in the theorem statements, but its
proof does not require convexity and works for nonconvex functions
as well.

\begin{lem}[Computing $\prox_{\gamma\regind}(x)$ \label{Lem:computing_complicated_projection}]
Consider the nonconvex compact constraint set $\mathcal{X}$ in problem (\ref{eq:original_problem-1}).
Denote $\kappa=1/(\beta\gamma+1)\in[0,1]$ and $\theta=\mu/(\gamma\kappa+\mu)\in[0,1]$.
Then, for any $x\in\mathbf{E}$, and for any $\mu,\beta,\gamma>0$,
we have $\prox_{\gamma\regind}\left(x\right)  =\theta\kappa x+\left(1-\theta\right)\proj\left(\kappa x\right)$.
\end{lem}
Finally, combining (\ref{eq:ADMM_orig-1-1}), \cite[Theorem 6.13]{beck2017first},
and Lemma \ref{Lem:computing_complicated_projection}, we arrive at
Algorithm \ref{algo:prs}. }

\paragraph{Algorithm subroutines}
The inner algorithm requires two subroutines, evaluating (i) $\prox_{\gamma f}(x)$,
which is the proximal operator of the convex function $f$ at the
input point $x$, and (ii) $\proj(x)$, which is a projection of $x$ 
on the nonconvex set $\mathcal{X}$. We discuss now how we compute
them in our implementation. To that goal, we recall that, for a
function $g$ (not necessarily convex) its proximal operator $\prox_{\gamma g}$ and Moreau envelope  $\prescript{\gamma}{}{g}$, where $\gamma>0$, are defined
as: 
\begin{equation}
\begin{alignedat}{1} & \prox_{\gamma g}(x)=\mathop{\textrm{argmin}}_{y\in\mathbf{E}}\left(g(y)+(1/2\gamma)\|y-x\|^{2}\right),\\
 & \prescript{\gamma}{}{g}(x)=\textrm{min}_{y\in\mathbf{E}}\left(g(y)+(1/2\gamma)\|y-x\|^{2}\right).
\end{alignedat}
\label{eq:prox-def}
\end{equation}
\paragraph{Computing proximal operator of $f$} For the convex function $f$, $\mathbf{prox}_{\gamma f}$
is always single-valued and computing it is equivalent to solving
a convex optimization problem, which often can be done in closed form
for many relevant cost functions in machine learning \cite[pp. 449-450]{beck2017first}.
If the proximal operator of $f$ does not admit a closed form solution,
then we solve the corresponding convex optimization problem \eqref{eq:prox-def}
to a high precision solution. For this purpose, we can select any convex optimization solver
supported by \texttt{MathOptInterface,} which is the abstraction layer
for optimization solvers in \texttt{Julia}. 

\paragraph{Computing projection onto $\mathcal{X}$} The notation $\proj(x)$ denotes the\textbf{ }\emph{projection operator
}of $x$ onto the constraint set $\mathcal{X}$, defined as 

\[\mathbf{\Pi}_\mathcal{X}(x)=\prox_{\gamma\iota_\mathcal{X}}(x)=\textrm{argmin}_{y\in\mathcal{X}}(\|y-x\|^{2}).\]

A list of nonconvex sets that are easy to project onto
can be found in \cite[\S 4]{Diamond2018}, this includes nonconvex
sets such as boolean vectors with fixed cardinality, vectors with
bounded cardinality, quadratic sets, matrices with bounded singular
values, matrices with bounded rank etc. If $\mathcal{X}$ is in this
list, then we project onto $\mathcal{X}$ directly.

Now consider the
case where the constraint set $\mathcal{X}$ decomposes as $\mathcal{X}=\text{\ensuremath{\mathcal{C}}}\bigcap\mathcal{N}$,
where $\mathcal{N}$ is a nonconvex set with tractable projection
and $\mathcal{C}$ is any compact convex set. In this setup, let $\iota_{\mathcal{C}}$
and $\iota_{\mathcal{N}}$ be the indicator functions of $\mathcal{C}$
and $\mathcal{N}$, respectively. Defining $\phi=f+\mathcal{\iota}_{\mathcal{C}}$,
we write problem \eqref{eq:original_problem-1} as: $\min_{x\in\mathbf{E}}\phi(x)+(\beta/2)\|x\|^{2}+\iota_{\mathcal{N}}(x).$

For any convex function $\phi$, its Moreau envelope $\prescript{\nu}{}{\phi}$,
for any $\nu>0$, has the following three desirable features. 

\begin{enumerate}
    \item For every $x\in\eu$ we have $\prescript{\nu}{}{\phi}(x)\leq\phi(x)$
and $\prescript{\nu}{}{\phi}(x)\to\phi(x)$ as $\nu\to0$ \cite[Theorem 1.25]{Rockafellar2009}. 
\item  we have $x^{\star}\in\mathop{\textrm{argmin}}_{x\in\eu}\phi(x)$ if
and only if $x^{\star}\in\mathop{\textrm{argmin}}_{x\in\eu} \prescript{\nu}{}{\phi}(x)$
with the minimizer $x^{\star}$ satisfying $\phi(x^{\star})=\prescript{\nu}{}{\phi}(x^{\star})$
\cite[Corollary 17.5]{Bauschke2017}. 

\item the Moreau envelope $\prescript{\nu}{}{\phi}$ is convex, and smooth
(\emph{i.e.}, it is differentiable and its gradient is Lipschitz continuous)
everywhere irrespective of the differentiability or smoothness of
the original function $\phi$. The gradient is: $\prescript{\nu}{}{\phi}(x)=\left(x-\prox_{\nu\phi}(x)\right)/\nu,$
which is $(1/\nu)-$Lipschitz continuous \cite[Proposition 12.29]{Bauschke2017}. 
\end{enumerate}

These properties make $\prescript{\nu}{}{\phi}$ a smooth approximation
of $\phi$ for a small enough $\nu$. Hence, we work with the following approximation
of the original problem: $\min_{x}\prescript{\nu}{}{\phi}+(\beta/2)\|x\|^{2}+\iota_{\mathcal{N}}(x),$
where we replace $f$ with $\prescript{\nu}{}{\phi}$
and $\iota_\mathcal{X}$ with $\iota_{\mathcal{N}}$ in Algorithms \ref{alg:exterior-point-method}
and \ref{algo:prs}. The proximal operator of $\prescript{\nu}{}{\phi}$
can be computed using $\prox_{\gamma\prescript{\nu}{}{\phi}}(x)=x+(\gamma/(\gamma+\nu))(\prox_{(\gamma+\nu)\phi}(x)-x),$
where computing $\prox_{(\gamma+\nu)\phi}(x)$ corresponds to solving
the following convex optimization problem $\mathop{\textrm{argmin}}_{y\in C}\phi(y)+1/(2(\gamma+\nu))\|y-x\|^{2}$,
which follows from \cite[Proposition 24.8]{Bauschke2017}.

{

\begin{rem}[Reasons for choosing Douglas-Rachford splitting as the inner algorithm.]

Problem \eqref{eq:smoothed-opt} involves minimizing the sum of two
functions: a convex function $\regf$ and a nonconvex function $\regind$.
As the objective is split into two parts in problem \eqref{eq:smoothed-opt},
selecting any other two-operator splitting algorithm
(\emph{e.g.}, forward-backward splitting \cite[page 25]{ryu2016primer},
Chambolle-Pock algorithm \cite[page 32]{ryu2016primer}, ADMM \cite{boydProx} \emph{etc.})
 can work as the inner algorithm in principle. However, in
the context of our problem setup, Douglas-Rachford splitting might
be the most suitable choice for the following reasons. 

\begin{enumerate}
\item We have picked Douglas-Rachford splitting over ADMM, because Douglas-Rachford operates on the original nonconvex problem, whereas ADMM can be viewed as Douglas-Rachford splitting on the dual of the original nonconvex problem \cite{themelis2020douglas}. As strong duality usually does not hold when the primal problem is nonconvex, it seems more intuitive to work with the nonconvex problem directly over its dual.

\item We favored Douglas-Rachford splitting over proximal gradient method, because even when the problem is convex, Douglas-Rachford splitting converges under more general conditions, whereas proximal gradient method require more restrictive conditions to converge \cite[page 49]{ryu2022large}. Hence, we believe that Douglas-Rachford splitting represents the most natural choice for the inner algorithm over the proximal gradient method.

    \item Douglas-Rachford splitting is favorably unique in contrast
with other two-operator splitting methods, as Douglas-Rachford splitting is the only
two-operator splitting method that satisfies the following properties
simultaneously \cite{ryu2020uniqueness}: (i) it is constructed only with scalar multiplication, addition, and proximal
operators, (ii) it computes proximal operators only once every iteration, (iii) it converges unconditionally for maximally monotone operators, and (iv) it does not increase the problem size.

In \S\ref{sec:Convergence-analysis}, some of these desirable properties
of Douglas-Rachford splitting are exploited to establish convergence.
While other operator splitting algorithms may work to establish convergence
as well, some of the unique features of Douglas-Rachford splitting
will be lost \cite{ryu2020uniqueness}.

\end{enumerate}

\end{rem}
}

\section{Convergence analysis\label{sec:Convergence-analysis}}

This section is organized as follows. We start with the definition
of the key geometry property of sets involving sparse and low-rank optimization problems. Then we define the local minima of such problems, followed by the assumptions we use in our convergence
analysis. We next discuss the convergence roadmap, where the first
step involves showing that the exterior point minimization function
is locally strongly convex and smooth around local minima, and the
second step entails connecting the local minima with the underlying
operator controlling \textsf{NExOS}. Then, we present the main result, which shows that, under mild regularity conditions, the inner algorithm of \textsf{NExOS} finds local minima for the penalized problems at a linear convergence rate, and as the penalty parameter goes to zero, the local minima of the penalized problems converge to a local
minimum of the original problem. Furthermore, we show that, when those regularity conditions 
do not hold, the inner algorithm is still guaranteed to subsequentially converge to a first-order stationary point at the 
rate $o(1/\sqrt{k})$.

{
The key geometric property of sparse and low-rank constraint sets that we use in our convergence analysis is  {\it prox-regularity at local minima}, \emph{i.e.}, having single-valued
Euclidean projection around local minima \cite{PoliRocka2000}. Prox-regularity of a set at a point is defined as follows.

\begin{defn}[Prox-regular set \cite{PoliRocka2000}]
A nonempty closed set $\mathcal{S}\subseteq\mathbf{E}$ is prox-regular
at a point $x\in\mathcal{S}$ if projection onto $\mathcal{S}$ is
single-valued on a neighborhood of $x$. The set
$\mathcal{S}$ is prox-regular if it is prox-regular at every point
in the set. 
\end{defn}

If the constraint set $\mathcal{X}$ decomposes as $\mathcal{X}=\text{\ensuremath{\mathcal{C}}}\bigcap\mathcal{N}$,
where $\mathcal{C}$ is a compact convex set,
and $\mathcal{N}$ is prox-regular around
local minima, then the feasible set $\mathcal{X}$
inherits the prox-regularity property around local minima from the
set $\mathcal{N}$ (see Lemma \ref{Lem:Prox-regularity-of-X}
in \S\ref{sec:Convergence-analysis}). The set $\mathcal{N}$ in \eqref{eq:low-rank-set} is a prox-regular set at
any point $X\in\mathbf{R}^{m\times d}$ where $\mathop{{\bf rank}}(X)=r$
\cite[Proposition 3.8]{Luke2013}. One can show that $\mathcal{X}$
inherits the prox-regularity property at any $X$ with $\mathop{{\bf rank}}(X)=r$ from the set $\mathcal{N}$;
a formal proof is given in Lemma \ref{Lem:Prox-regularity-of-X}
in Appendix \ref{subsec:Proof-to-rank-sparse-reg}. Similarly, $\mathcal{N}$ in \eqref{eq:sparse-set} is prox-regular
at any point $x$ satisfying $\card(x)=k$ because we can write $\mathbf{card}(x)\leq k$
as a special case of the low-rank constraint by embedding the components
of $x$ in the diagonal entries of a matrix and then using the prox-regularity
of low-rank constraint set. 

In our convergence analysis, we use the prox-regularity property of sparse and low-rank optimization to establish our convergence results, hence \textsf{NExOS} can be applied to problems involving other constraint sets that are prox-regular at local minimal. Some other notable prox-regular sets are as follows. Closed convex
sets are prox-regular everywhere \cite[page 612]{Rockafellar2009}. 
Examples of well-known prox-regular sets that are not convex include sets involving bilinear constraints \cite{bauschke2022projections}, weakly convex sets \cite{vial1983strong}, proximally smooth sets
\cite{clarke1995proximal}, strongly amenable sets \cite[page 612]{Rockafellar2009},
and sets with Shapiro property \cite{shapiro1994existence}. Also,
a nonconvex set defined by a system of finitely many inequality and
equality constraints for which a basic constraint qualification holds
is prox-regular \cite[page 10]{rockafellar2020characterizing}.

}

 We next provide the definition of local minimum of problem (\ref{eq:original_problem-1}).
Recall that, according to our setup the set $\mathcal{X}$ is prox-regular
at local minimum. 
\begin{defn}[Local minimum of problem (\ref{eq:original_problem-1})\label{Def:Local-minima-of-P}]
A point $\bar{x}\in\mathcal{X}$ is a local minimum of problem (\ref{eq:original_problem-1})
if the set $\mathcal{X}$ is prox-regular at $\bar{x},$ and there
exists a closed ball with center $\bar{x}$ and radius $r$, denoted by $\overline{B}(\bar{x};r)$ such that for all
$y\in\mathcal{X}\cap\overline{B}(\bar{x};r)\setminus\{\bar{x}\},$
we have $f(\bar{x})+(\beta/2)\|\bar{x}\|^{2}<f(y)+(\beta/2)\|y\|^{2}$.
\end{defn}
In the definition above, the strict inequality is due to the strongly
convex nature of the objective $f+(\beta/2)\|\cdot\|^{2}$ and follows
from \cite[Proposition 2.1]{auslender1984stability} and \cite[Theorem 6.12]{Rockafellar2009}. We now state and justify the assumptions used in our convergence
analysis.

\begin{assumption}[Strong convexity and smoothness of $f$]\label{assum:strong_convex_smooth}

The function $f$ in problem (\ref{eq:smoothed-opt}) is $\alpha$-strongly
convex and $L$-smooth where $L>\alpha>0$, i.e., $f-(\alpha/2)\|\cdot\|^{2}$
is convex and $f-(L/2)\|\cdot\|^{2}$ is concave.

\end{assumption}

\begin{assumption}[Problem (\ref{eq:original_problem-1}) is not trivial\label{assum:unique_strongly_convex}]

The unique solution to the unconstrained strongly convex problem $\min_{x}f(x)+(\beta/2)\|x\|^{2}$
does not lie in $\mathcal{X}.$

\end{assumption}

Assumption \ref{assum:strong_convex_smooth} corresponds to the function $f+(\beta/2)\| \cdot \|^{2}$ being $(\alpha+\beta)$-strongly convex and $(L+\beta)$-smooth. In our convergence analysis, $\beta>0$ can be arbitrarily small, so it does not fall outside the setup described in \S\ref{sec:Introduction}.
The $L$-smoothness in $f$ is equivalent to its gradient $\nabla f$
being $L-$Lipschitz everywhere on $\eu$ \cite[Theorem 18.15]{Bauschke2017}.
In our convergence analysis, this assumption is required
in establishing linear convergence of the inner algorithms of \textsf{NExOS}.

Assumption \ref{assum:unique_strongly_convex} imposes that a local
minimum of problem (\ref{eq:original_problem-1}) is not the global minimum
of its unconstrained convex relaxation, which does not incur any loss
of generality. We can solve the unconstrained strongly convex optimization
problem $\min_{x}f(x)+(\beta/2)\|x\|^{2}$ and check if the corresponding
minimizer lies in $\mathcal{X}$; if that is the case, then that minimizer
is also the global minimizer of problem (\ref{eq:original_problem-1}), and
there is no point in solving the nonconvex problem. This can be
easily checked by solving an unconstrained convex optimization problem, so Assumption \ref{assum:unique_strongly_convex}
does not cause any loss of generality.

 {
 To discuss our convergence roadmap, we introduce some standard operator theoretic notions as follows. A set-valued operator\emph{ }$\opA:\eu\rightrightarrows\eu$ maps
an element $x$ in $\eu$ to a set $\opA(x)$ in $\eu$; its\emph{
}domain is defined as $\mathop{{\bf dom}}\opA=\{x\in\eu\mid\opA(x)\neq\emptyset\},$
its range is defined as $\mathop{{\bf ran}}\opA=\bigcup_{x\in\eu}\opA(x),$
and it is completely characterized by its graph: $\gra\opA=\{(u,x)\in\eu\times\eu\mid u\in\opA(x)\}.$
Furthermore, we define $\fix\opA=\{x\in\eu\mid x\in\opA(x)\},$ and
$\zer\opA=\{x\in\eu\mid0\in\opA(x)\}.$ For every $x,$ addition of
two operators $\opA_{1},\opA_{2}:\mathbf{E}\rightrightarrows\mathbf{E}$,
denoted by $\opA_{1}+\opA_{2}$, is defined as $(\opA_{1}+\opA_{2})(x)=\opA_{1}(x)+\opA_{2}(x),$
subtraction is defined analogously, and composition of these operators,
denoted by $\opA_{1}\opA_{2},$ is defined as $\opA_{1}\opA_{2}(x)=\opA_{1}(\opA_{2}(x))$;
note that order matters for composition. Also, if $\mathcal{S}\subseteq\mathbf{E}$
is a nonempty set, then $\opA(\mathcal{S})=\cup\{\opA(x)\mid x\in\mathcal{S}\}$.}

We next discuss our convergence roadmap. Convergence
of \textsf{NExOS} is controlled by the \textsf{DRS} operator of problem (\ref{eq:smoothed-opt}):
\begin{equation}
\opT_{\mu}=\prox_{\gamma\regind}\left(2\prox_{\gamma\regf}-\id\right)+\id-\prox_{\gamma\regf},\label{eq:NCDRS-operator}
\end{equation}
where $\mu>0,$ and $\id$ stands for the identity operator in $\eu$,
\emph{i.e.}, for any $x\in\eu$, we have $\id(x)=x$. Using $\opT_{\mu}$,
the inner algorithm---Algorithm \ref{algo:prs}---can be written as 
\begin{equation}
\begin{alignedat}{1}z^{n+1} & =\opT_{\mu}\left(z^{n}\right)\end{alignedat}
\tag{\ensuremath{\mathcal{A}_{\mu}}}\label{eq:convenient-form-1}
\end{equation}
where $\mu$ is the penalty parameter and $z^{n}$ is
initialized at the fixed point from the previous inner algorithm.

To show the convergence of \textsf{NExOS}, we first show that for
some $\mu_{\textrm{max}}>0$, for any $\mu\in(0,\mu_{\textrm{max}}]$,
the exterior point minimization function $\regf+\regind$ is strongly
convex and smooth on some open ball with center $x$ and radius $r_{\textup{max}}$, denoted by $B(\bar{x};r_{\textup{max}})$,
where it will attain a unique local minimum $x_{\mu}$. Then we show
that for $\mu\in(0,\mu_{\textrm{max}}],$ the operator $\opT_{\mu}(x)$
will be contractive in $x$ and Lipschitz continuous in $\mu$, and
connects its fixed point set $\fix\opT_{\mu}$ with the local minima
$x_{\mu}$, via the relationship $x_{\mu}=\prox_{\gamma\regf}(\fix\opT_{\mu}).$
In the main convergence result, we show that for a sequence
of penalty parameters $\mathfrak{M}=\{\mu_{1},\mu_{2},\mu_{3},\ldots,\mu_{N}\}$ and under proper initialization,
if we apply \textsf{NExOS }to $\mathfrak{M},$ then for all $\mu_{m}\in\mathfrak{M},$the
inner algorithm will linearly converge to $x_{\mu_{m}}$, and as $\mu_{N}\to0,$
we will have $x_{\mu_{N}}\to\bar{x}$. Finally, we show that, when the regularity conditions of the prior result
do not hold, the inner algorithm is still guaranteed to subsequentially converge to a first-order stationary point (not necessarily a local minimum) at the 
rate $o(1/\sqrt{k})$.

We next present a proposition that shows that the exterior point minimization
function in problem (\ref{eq:smoothed-opt}) will be locally strongly convex
and smooth around local minima for our selection of penalty parameters,
even though problem (\ref{eq:original_problem-1}) is nonconvex. Furthermore,
as the penalty parameter goes to zero, the local minimum of problem (\ref{eq:smoothed-opt})
converges to the local minimum of the original problem (\ref{eq:original_problem-1}).
So, under proper initialization\textsf{,
NExOS} can solve the sequence of penalized problems $\{\mathcal{P}_{\mu}\}_{\mu\in(0,\mu_{\textrm{init}}]}$
similar to convex optimization problems; we will prove this in our
main convergence result (Theorem \ref{Theorem_Main-convergence-result}). 

\begin{prop}[Attainment of local minimum by $\regf+\regind$\label{Attainment-of-local-min}]
Let Assumptions \ref{assum:strong_convex_smooth} and \ref{assum:unique_strongly_convex}
hold for problem (\ref{eq:original_problem-1}), and let $\bar{x}$ be a local minimum to problem (\ref{eq:original_problem-1}). Then the following hold. 
\begin{enumerate}[label=(\roman*)]
    \item There exist $\mu_{\textup{max}}>0$ and $r_{\textup{max}}>0$
such that for any $\mu\in(0,\mu_{\textup{max}}]$, the exterior point
minimization function $\regf+\regind$ in problem (\ref{eq:smoothed-opt})
is strongly convex and smooth in the open ball $B(\bar{x};r_{\textup{max}})$
and will attain a unique local minimum $x_{\mu}$ in this ball.
    \item As $\mu\to0,$ this local minimum $x_{\mu}$ will go to $\bar{x}$
in limit, i.e., $x_{\mu}\to\bar{x}$. 
\end{enumerate}
\end{prop}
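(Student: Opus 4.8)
The plan is to first pin down the local smoothness of the penalty term, then upgrade this to strong convexity on a small ball tied to the prox-regularity of $\mathcal{X}$, and finally read off existence and uniqueness of $x_{\mu}$ together with its convergence to $\bar{x}$.

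First I would exploit prox-regularity directly. Since $\mathcal{X}$ is prox-regular at $\bar{x}$, there is a neighborhood on which $\proj$ is single-valued and Lipschitz continuous \cite{PoliRocka2000}; because $\morind(x)=(1/2\mu)d^{2}(x)$ is the Moreau envelope of $\ind$ and $\prox_{\gamma\ind}=\proj$, standard Moreau-envelope calculus \cite[Proposition 12.9]{Bauschke2017} gives that $\morind$ is continuously differentiable there with $\nabla\morind(x)=(1/\mu)(x-\proj(x))$, and this gradient is Lipschitz. I would then fix $r_{\textup{max}}$ small enough that $\overline{B}(\bar{x};r_{\textup{max}})$ sits inside this prox-regularity neighborhood and inside the radius $r$ furnished by Definition~\ref{Def:Local-minima-of-P}. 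Smoothness of $\regf+\regind$ on the ball is immediate: $\nabla\regf$ is $L$-Lipschitz by Assumption~\ref{assum:strong_convex_smooth}, the term $\beta(\cdot)$ is linear, and $\nabla\morind$ is Lipschitz, so the sum has Lipschitz gradient.

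Second, and this is the crux, I would establish strong convexity of $\regf+\regind$ on $B(\bar{x};r_{\textup{max}})$. Since $\regf+(\beta/2)\|\cdot\|^{2}$ is $(\alpha+\beta)$-strongly convex everywhere, it suffices to control the failure of convexity of $\morind$. The geometric intuition is that the only negative curvature of $\morind$ comes from the tangential bending of $\mathcal{X}$ and is governed by the product of the distance $d(x)$ and the local prox-regularity modulus, divided by $\mu$; on $\mathcal{X}$ itself ($d(x)=0$) the envelope is flat. Concretely, prox-regularity yields a hypomonotonicity estimate for $\nabla\morind$, so that $\morind$ is weakly convex on the ball with modulus bounded by $c\,d(x)/\mu$ for a constant $c$ tied to the prox-regularity radius. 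The hard part will be to make this one-sided second-order estimate rigorous for a general prox-regular set and then to choose the pair $(\mu_{\textup{max}},r_{\textup{max}})$ so that $c\,d(x)/\mu\leq\alpha+\beta$ uniformly on the ball for all $\mu\in(0,\mu_{\textup{max}}]$; this couples the admissible radius to the penalty range and is exactly where the monotone dependence of the convexity region on $\mu$ (flagged in the introduction) enters. Granting this, $\nabla(\regf+\regind)$ is strongly monotone on $B(\bar{x};r_{\textup{max}})$, i.e.\ $\regf+\regind$ is strongly convex there with modulus close to $\alpha+\beta$.

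Third, I would deduce existence and uniqueness of $x_{\mu}$. Strong convexity on the convex ball admits at most one critical point there, while continuity on the compact closure $\overline{B}(\bar{x};r_{\textup{max}})$ provides a minimizer, and uniqueness of the stationary point makes it the sole candidate for a local minimum in the ball. To certify that this minimizer is interior, hence a genuine local minimum of the unconstrained penalized objective, I would invoke the convergence proved next, which places the minimizer strictly inside the ball for small $\mu$.

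Finally, for part (ii) I would argue from stationarity together with a value comparison. The optimality condition $\nabla\regf(x_{\mu})+\beta x_{\mu}+(1/\mu)(x_{\mu}-\proj(x_{\mu}))=0$ gives $d(x_{\mu})=\|x_{\mu}-\proj(x_{\mu})\|=\mu\|\nabla\regf(x_{\mu})+\beta x_{\mu}\|$, which is $O(\mu)$ because the right-hand norm is bounded on the compact ball; hence $d(x_{\mu})\to0$. Since $\bar{x}\in\mathcal{X}$ forces $\morind(\bar{x})=0$ and $x_{\mu}$ minimizes $\regf+\regind$ over the ball, dropping the nonnegative penalty term yields $\regf(x_{\mu})+(\beta/2)\|x_{\mu}\|^{2}\leq\regf(\bar{x})+(\beta/2)\|\bar{x}\|^{2}$. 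Any limit point $x^{\star}$ of $(x_{\mu})$, which exists by compactness, then lies in $\overline{B}(\bar{x};r_{\textup{max}})$, is feasible ($d(x^{\star})=0$), and satisfies $\regf(x^{\star})+(\beta/2)\|x^{\star}\|^{2}\leq\regf(\bar{x})+(\beta/2)\|\bar{x}\|^{2}$; the strict local minimality in Definition~\ref{Def:Local-minima-of-P} then forces $x^{\star}=\bar{x}$. As every limit point equals $\bar{x}$, I conclude $x_{\mu}\to\bar{x}$, which in particular places $x_{\mu}$ in the interior of the ball for small $\mu$ and closes the interiority gap left open in part (i).
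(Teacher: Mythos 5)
Your opening step (single-valuedness and Lipschitz continuity of $\proj$ near $\bar{x}$, hence differentiability of $\morind$ with $\nabla\morind=(1/\mu)(\id-\proj)$) matches the paper's first step, and your part (ii) argument (value comparison plus compactness of limit points, in place of the paper's appeal to \cite[Theorem 1.25]{Rockafellar2009}) is a fine, essentially equivalent alternative. The proof breaks at the step you yourself flag as ``the hard part,'' and the break is structural, not a deferred technicality. Your proposed weak-convexity modulus $c\,d(x)/\mu$ for $\morind$ is the correct pointwise curvature estimate (for a sphere of radius $R$, the tangential eigenvalue of the Hessian of $\morind$ at a point at depth $d(x)$ is about $-d(x)/(\mu R)$), and precisely because it is correct, the condition $c\,d(x)/\mu\leq\alpha+\beta$ cannot hold on a \emph{fixed} ball for all $\mu\in(0,\mu_{\textup{max}}]$: on $B(\bar{x};r_{\textup{max}})$ the distance $d(x)$ reaches values of order $r_{\textup{max}}$, so your inequality forces $r_{\textup{max}}\leq(\alpha+\beta)\mu/c\to0$. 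What your route can deliver is strong convexity on a $\mu$-dependent ball of radius $O(\mu)$, which is strictly weaker than the statement being proved: the proposition fixes one $r_{\textup{max}}$ for the entire range $(0,\mu_{\textup{max}}]$, and everything downstream (Propositions \ref{Operators-contractive} and \ref{thm:fixed_point_relationship}, Lemma \ref{thm:convergence_outer_iterations}, Theorem \ref{Theorem_Main-convergence-result}) leans on that uniformity.

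The paper's proof avoids any modulus proportional to $d(x)/\mu$, and its mechanism is special to indicator functions. It treats $\ind$ as a prox-regular \emph{function} (Definition \ref{def:prox-reg-def}) and exploits positive homogeneity, $\ind\equiv c\,\ind$ for every $c>0$: rescaling normal vectors shows that $\ind$ is prox-regular at $\bar{x}$ for $0$ with parameter as small as $\beta/2$, at the price of shrinking only the localization radius to $\sigma\min\{1,\beta/2\rho\}$. Then Poliquin--Rockafellar's envelope theorem (\cite[Theorem 5.2]{poliquin1996prox}, Lemma \ref{thm:Consider-a-function} in the paper) gives convexity of $\morind+\frac{\beta/2}{2(1-\mu\beta/2)}\|\cdot\|^{2}$; since $\frac{\beta/2}{1-\mu\beta/2}\leq\beta$ for $\mu\leq1/\beta$, the $(\beta/2)\|\cdot\|^{2}$ term already inside $\regind$ absorbs this defect uniformly in $\mu$, and strong convexity then comes from $f$ alone. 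If you want to salvage your route, this rescaling of the prox-regularity parameter---possible only because $\ind$ takes values in $\{0,\infty\}$---is the missing ingredient; be aware, though, that your curvature computation shows the $\mu$-uniformity of the resulting neighborhood is delicate, since the localization in Definition \ref{def:prox-reg-def} bounds the subgradients, and for $\nabla\morind=(\id-\proj)/\mu$ that bound implicitly constrains $d(x)/\mu$. Finally, a smaller but genuine flaw: you certify interiority of $x_{\mu}$ in part (i) by invoking part (ii), yet part (ii) opens with the stationarity condition, which presupposes interiority---a circle. It is repairable: the inequality $\morind(x_{\mu})\leq f(\bar{x})+(\beta/2)\|\bar{x}\|^{2}-f(x_{\mu})-(\beta/2)\|x_{\mu}\|^{2}$ holds for the minimizer over the closed ball with no first-order condition, giving $d^{2}(x_{\mu})\leq2\mu C$ and hence feasibility of limit points, after which strict local minimality yields $x_{\mu}\to\bar{x}$ and, for small $\mu$, interiority.
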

\begin{proof}
See Appendix \ref{subsec:Proof-to-Lemma-Nov-4}.
\end{proof}

Because the exterior point minimization function is locally strongly
convex and smooth, intuitively the \textsf{DRS}
operator of problem (\ref{eq:smoothed-opt}) would behave similar
to that of a \textsf{DRS} operator of a composite convex optimization
problem, but locally. When we minimize a sum of two convex functions
where one of them is strongly convex and smooth, the corresponding
\textsf{DRS} operator is contractive \cite[Theorem 1]{Giselsson17}.
So, we can expect that the \textsf{DRS} operator for problem (\ref{eq:smoothed-opt})
would be locally contractive around a local minimum, which indeed
turns out to be the case as proven in the next proposition. Furthermore,
the next proposition shows that $\opT_{\mu}(x)$ is locally Lipschitz
continuous in the penalty parameter $\mu$ around a local minimum
for fixed $x$. As $\opT_{\mu}(x)$ is locally contractive in $x$
and Lipschitz continuous in $\mu$, it ensures that as we reduce the
penalty parameter $\mu,$ the local minimum $x_{\mu}$ of problem (\ref{eq:smoothed-opt})
found by \textsf{NExOS }does not change abruptly.
\begin{prop}[Characterization of $\opT_{\mu}$\label{Operators-contractive}]
Let Assumptions \ref{assum:strong_convex_smooth} and \ref{assum:unique_strongly_convex}
hold for problem (\ref{eq:original_problem-1}),  and let $\bar{x}$ be a local minimum to problem (\ref{eq:original_problem-1}). Then the following hold. 
\begin{enumerate}[label=(\roman*)]
    \item There exists a contraction factor
$\kappa'\in(0,1)$ such that for any $x_{1},x_{2}\in B(\bar{x};r_{\textup{max}})$
and $\mu\in(0,\mu_{\textup{max}}]$, we have $\left\Vert \opT_{\mu}(x_{1})-\opT_{\mu}(x_{2})\right\Vert   \leq\kappa'\left\Vert x_{1}-x_{2}\right\Vert$. 
    \item For any $x\in B(\bar{x};r_{\textup{max}}),$ the operator $\opT_{\mu}(x)$
is Lipschitz continuous in $\mu$, i.e., there exists an $\ell>0$
such that for any $\mu_{1},\mu_{2}\in(0,\mu_{\textup{max}}]$ and
$x\in B(\bar{x};r_{\textup{max}})$, we have 
$\left\Vert \opT_{\mu_{1}}(x)-\opT_{\mu_{2}}(x)\right\Vert \leq\ell\|\mu_{1}-\mu_{2}\|.$
\end{enumerate}
\end{prop}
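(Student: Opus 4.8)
The plan is to exploit the splitting structure of the \textsf{DRS} operator and, on the ball $B(\bar{x};r_{\textup{max}})$ supplied by Proposition \ref{Attainment-of-local-min}, reduce the analysis to the well-understood convex setting. Writing $P_f = \prox_{\gamma\regf}$, $R_f = 2P_f - \id$, and $R_{\regind} = 2\prox_{\gamma\regind} - \id$, a short algebraic rearrangement puts the operator in (\ref{eq:NCDRS-operator}) into the symmetric form $\opT_\mu = \tfrac{1}{2}\id + \tfrac{1}{2}R_{\regind}R_f$. The second ingredient is a closed form for the second resolvent: completing the square in the $(\beta/2)\|\cdot\|^2$ summand and using that the proximal operator of $\iota$ is the metric projection $\proj$ (independent of the proximal parameter) gives $\prox_{\gamma\regind}(w) = \theta\kappa w + (1-\theta)\proj(\kappa w)$, with $\kappa = 1/(\beta\gamma+1)$ and $\theta = \mu/(\gamma\kappa+\mu)$ -- this is exactly the computation underlying Algorithm \ref{algo:prs} and is valid wherever $\proj$ is single-valued, which holds on a neighborhood of $\bar{x}$ by prox-regularity.

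For part (i), Assumption \ref{assum:strong_convex_smooth} makes $\regf$ globally $\alpha$-strongly convex and $L$-smooth, so by the standard reflected-resolvent estimate behind \cite[Theorem 1]{Giselsson17} the map $R_f$ is a \emph{global} contraction with some factor $\delta = \max\{(1-\gamma\alpha)/(1+\gamma\alpha),\,(\gamma L-1)/(\gamma L+1)\} \in [0,1)$. By Proposition \ref{Attainment-of-local-min}, $\regind$ is convex on $B(\bar{x};r_{\textup{max}})$, hence $\prox_{\gamma\regind}$ is firmly nonexpansive there and its reflection $R_{\regind}$ is nonexpansive. Combining these -- $R_{\regind}$ nonexpansive and $R_f$ a $\delta$-contraction -- in the symmetric form yields, for $x_1,x_2 \in B(\bar{x};r_{\textup{max}})$,
\[
\|\opT_\mu(x_1) - \opT_\mu(x_2)\| \le \tfrac{1}{2}\|x_1-x_2\| + \tfrac{1}{2}\|R_{\regind}R_f(x_1) - R_{\regind}R_f(x_2)\| \le \tfrac{1+\delta}{2}\|x_1-x_2\|,
\]
so $\kappa' = (1+\delta)/2 \in (0,1)$ serves uniformly over $\mu \in (0,\mu_{\textup{max}}]$.

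For part (ii), I would observe that $P_f$, $R_f$, $\kappa$, and $\proj$ are all independent of $\mu$, so in $\opT_\mu = \prox_{\gamma\regind}R_f + \id - P_f$ only $\theta = \theta(\mu)$ carries the penalty parameter. Subtracting the two closed forms at a fixed $x$ with $w = R_f(x)$ produces the cancellation
\[
\opT_{\mu_1}(x) - \opT_{\mu_2}(x) = \bigl(\theta(\mu_1) - \theta(\mu_2)\bigr)\bigl(\kappa w - \proj(\kappa w)\bigr),
\]
whose norm factors as $|\theta(\mu_1)-\theta(\mu_2)|\,d(\kappa w)$. Since $\theta'(\mu) = \gamma\kappa/(\gamma\kappa+\mu)^2 \le 1/(\gamma\kappa)$ on $(0,\mu_{\textup{max}}]$, the scalar map $\theta$ is Lipschitz in $\mu$; and since $x \mapsto d(\kappa R_f(x))$ is a composition of Lipschitz maps it is bounded by some $D$ on the bounded ball $B(\bar{x};r_{\textup{max}})$. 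This delivers the claim with $\ell = D/(\gamma\kappa)$.

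The main obstacle is the locality in part (i): $\regind$ is only guaranteed convex on $B(\bar{x};r_{\textup{max}})$, whereas the nonexpansiveness of $R_{\regind}$ is invoked at the reflected points $R_f(x_1),R_f(x_2)$, which need not lie in that ball. I would close this gap with the closed form above together with prox-regularity of $\mathcal{X}$: on a neighborhood of $\bar{x}$ the projection $\proj$ is single-valued and locally behaves like projection onto a convex set, so $\prox_{\gamma\regind}$ inherits firm nonexpansiveness precisely on the region reached by $R_f$; if needed, $r_{\textup{max}}$ (and, through the monotone dependence of the convexity radius on $\mu$ recorded in Proposition \ref{Attainment-of-local-min}, also $\mu_{\textup{max}}$) is shrunk so that $R_f(B(\bar{x};r_{\textup{max}}))$ stays inside the convexity region. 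Verifying this inclusion -- and the attendant single-valuedness of $\proj$ at the reflected points -- is the delicate step; the two norm estimates above are then routine.
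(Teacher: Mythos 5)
Your proposal is correct and follows essentially the same route as the paper's proof: for (i) you use the reflection decomposition $2\opT_{\mu}-\id=(2\prox_{\gamma\regind}-\id)(2\prox_{\gamma\regf}-\id)$, the contraction factor of $2\prox_{\gamma\regf}-\id$ from \cite[Theorem 1]{Giselsson17}, and nonexpansiveness of $2\prox_{\gamma\regind}-\id$ on $B(\bar{x};r_{\textup{max}})$ coming from the local convexity of $\regind$ (Proposition \ref{Attainment-of-local-min}), arriving at the same $\kappa'=(1+\kappa)/2$; for (ii) you use the closed form of $\prox_{\gamma\regind}$ from Lemma \ref{Lem:computing_complicated_projection} and bound the $\mu$-dependent coefficients over the bounded ball, which is exactly the paper's computation in a slightly cleaner factored form, $\opT_{\mu_{1}}(x)-\opT_{\mu_{2}}(x)=\bigl(\theta(\mu_{1})-\theta(\mu_{2})\bigr)\bigl(\kappa w-\proj(\kappa w)\bigr)$. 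The ``obstacle'' you flag at the end---that nonexpansiveness of $2\prox_{\gamma\regind}-\id$ is invoked at the reflected points $(2\prox_{\gamma\regf}-\id)(x_{i})$, which need not lie in $B(\bar{x};r_{\textup{max}})$---is genuine, but it is equally present and left unaddressed in the paper's own proof, which applies the same inequality without verifying that inclusion; so your explicit acknowledgment of this step and sketched remedy make your write-up, if anything, more careful than the original.
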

\begin{proof}
See Appendix \ref{subsec:Proof-to-Proposition-operators-contractive}.
\end{proof}
If the inner algorithm (\ref{eq:convenient-form-1})
converges to a point $z_{\mu}$, then $z_{\mu}$ would be a fixed
point of the \textsf{DRS} operator
$\opT_{\mu}$. Establishing the convergence of \textsf{NExOS} necessitates
connecting the local minimum $x_{\mu}$ of problem (\ref{eq:smoothed-opt})
to the fixed point set of $\opT_{\mu}$, which is achieved by the
next proposition. Because our \textsf{DRS} operator locally behaves
in a manner similar to the \textsf{DRS }operator of a convex optimization
problem as shown by Proposition \ref{Operators-contractive}, it is
natural to expect that the connection between $x_{\mu}$ and $z_{\mu}$
in our setup would be similar to that of a convex setup, but in a
local sense. This indeed turns out to be the case as proven in the
next proposition. The statement of this proposition is structurally
similar to \cite[Proposition 25.1(ii)]{Bauschke2017} that establishes
a similar relationship globally for a convex setup, whereas our result
is established around the local minima of problem (\ref{eq:smoothed-opt}). 
\begin{prop}[Relationship between local minima of problem (\ref{eq:original_problem-1})
and $\fix\opT_{\mu}$\label{thm:fixed_point_relationship}]
Let Assumptions \ref{assum:strong_convex_smooth} and \ref{assum:unique_strongly_convex}
hold for problem (\ref{eq:original_problem-1}). Let $\bar{x}$ be a local minimum to problem (\ref{eq:original_problem-1}), and $\mu\in(0,\mu_{\textup{max}}]$. Then, $x_{\mu}=\argmin_{B(\bar{x};r_{\textup{max}})}f(x)+\regind(x)=\prox_{\gamma\regf}\left(\fix\opT_{\mu}\right)$,
where the sets $\fix\opT_{\mu},$ and $\prox_{\gamma\regf}\left(\fix\opT_{\mu}\right)$
are singletons over $B(\bar{x};r_{\textup{max}})$.
\end{prop}
\begin{proof}
See Appendix \ref{subsec:fixed_point_lemma}.
\end{proof}
Before we present the main convergence result, we provide a helper
lemma, which shows how the distances between $x_{\mu},z_{\mu}$ and
$\bar{x}$ change as $\mu$ is varied in Algorithm \ref{alg:exterior-point-method}.
Additionally, this lemma provides the range for the proximal parameter
$\gamma$. If $\mathcal{X}$ is a bounded set satisfying $\|x\|\leq D$
for all $x\in\mathcal{X},$ then term $\textup{max}_{x\in B(\bar{x};r_{\textup{max}})}\|\nabla f(x)\|$
in this lemma can be replaced with $L\times D$.  

\begin{lem}[Distance between local minima of problem (\ref{eq:original_problem-1})
with local minima of problem (\ref{eq:smoothed-opt})\label{thm:convergence_outer_iterations}]
Let Assumptions \ref{assum:strong_convex_smooth} and \ref{assum:unique_strongly_convex}
hold for problem (\ref{eq:original_problem-1}), and let $\bar{x}$ be a local minimum to problem (\ref{eq:original_problem-1})
over $B(\bar{x};r_{\textup{max}})$. Then the following hold.
\begin{enumerate}[label=(\roman*)]
    \item For any $\mu\in(0,\mu_{\textup{max}}]$, the unique local minimum
$x_{\mu}$ of problem (\ref{eq:smoothed-opt}) over $B(\bar{x};r_{\textup{max}})$
satisfies $\Vert x_{\mu}-\bar{x}\Vert < r_{\textup{max}}/\eta'$
for some $\eta'>1$.
    \item Let $z_{\mu}$ be the unique fixed point of $\opT_{\mu}$ over
$B(\bar{x};r_{\textup{max}})$ corresponding to $x_{\mu}$. Then for
any $\mu\in(0,\mu_{\textup{max}}]$, we have $r_{\textup{max}}-\|x_{\mu}-\bar{x}\|>(\eta'-1)r_{\textup{max}}/\eta'$ and $r_{\textup{max}}-\|z_{\mu}-\bar{x}\|>\psi$,
 where $\psi=(\eta'-1)r_{\textup{max}}/\eta'-\gamma\;\textup{max}_{x\in B(\bar{x};r_{\textup{max}})}\|\nabla f(x)\|>0$
 with the proximal parameter $\gamma$ taken to satisfy 
 \[0<\gamma<(\eta'-1)r_{\textup{max}}/\left(\eta'\textup{max}_{x\in B(\bar{x};r_{\textup{max}})}\|\nabla f(x)\|\right).\]
 
Furthermore, $\min_{\mu\in(0,\mu_{\textup{max}}]}\left\{ \left(r_{\textup{max}}-\|z_{\mu}-\bar{x}\|\right)-\psi\right\} >0$.
\end{enumerate}
\end{lem}
\begin{proof}
See Appendix \ref{subsec:Proof-to-outer-iterations}.
\end{proof}

We now present our main convergence results for \textsf{NExOS}. For
convenience, we denote the $n$-th iterates of the inner algorithm
of \textsf{NExOS }for penalty parameter $\mu$ by $\{x_{\mu}^{n},y_{\mu}^{n},z_{\mu}^{n}\}$.
In the theorem, an $\epsilon$-approximate fixed point $\widetilde{z}$
of $\opT_{\mu}$ is defined by $\max\{\|\widetilde{z}-\opT_{\mu}(\widetilde{z})\|,\|z_{\mu}-\widetilde{z}\| \} \leq\epsilon,$
where $z_{\mu}$ is the unique fixed point of $\opT_{\mu}$ over $\ensuremath{B(\bar{x};r_{\textup{max}})}$.
Furthermore, define:
\begin{equation}
\overline{\epsilon}\coloneqq\min\{\min_{\mu\in(0,\mu_{\textup{max}}]}((r_{\textup{max}}-\|z_{\mu}-\bar{x}\|)-\psi)/2,(1-\kappa')\psi\}>0,\label{eq:epsilon_upper_bound}
\end{equation}
where $\kappa'\in(0,1)$ is the contraction factor of $\opT_{\mu}$
for any $\mu>0$ (cf. Proposition \ref{Operators-contractive}) and
the right-hand side is positive due to the third and fifth equations of Lemma \ref{thm:convergence_outer_iterations}(ii).
Theorem \ref{Theorem_Main-convergence-result} states that if we have
a good initial point $z_{\textrm{init}}$ for the first penalty parameter
$\mu_{\textrm{init}}$, then \textsf{NExOS} will construct a finite
sequence of penalty parameters such that all the inner algorithms
for these penalty parameters will linearly converge to the unique local minima of
the corresponding inner problems.   

\begin{thm}[Convergence result for \textsf{NExOS}\label{Theorem_Main-convergence-result}]
Let Assumptions \ref{assum:strong_convex_smooth} and \ref{assum:unique_strongly_convex}
hold for problem (\ref{eq:original_problem-1}), and let $\bar{x}$ be a local minimum to problem (\ref{eq:original_problem-1}). Suppose that the fixed-point tolerance $\epsilon$ for Algorithm
\ref{algo:prs} satisfies $\epsilon\in[0,\overline{\epsilon}),$ where
$\overline{\epsilon}$ is defined in (\ref{eq:epsilon_upper_bound}).
The proximal parameter $\gamma$ is selected to satisfy the fourth equation of Lemma \ref{thm:convergence_outer_iterations}(ii).\textbf{}
In this setup, \textsf{NExOS} will construct a finite sequence of
strictly decreasing penalty parameters $\mathfrak{M}=\{\mu_{1}\coloneqq\mu_{\textup{init}},\mu_{2}=\rho\mu_{1},\mu_{3}=\rho\mu_{2},\ldots\},$
with $\mu_{\textup{init}}\leq\mu_{\textup{max}}$ and $\rho\in(0,1)$,
such that we have the following recursive convergence property. 

For
any $\mu\in\mathcal{M}$, if an $\epsilon$-approximate fixed point
of $\opT_{\mu}$ over $B(\bar{x};r_{\textup{max}})$ is used to initialize
the inner algorithm for penalty parameter $\rho\mu$, then the corresponding
inner algorithm iterates $z_{\rho\mu}^{n}$ linearly converges to
$z_{\rho\mu}$ that is the unique fixed point of $\opT_{\rho\mu}$
over $B\left(\bar{x},r_{\textup{max}}\right)$, and the iterates $x_{\rho\mu}^{n},y_{\rho\mu}^{n}$
linearly converge to $x_{\rho\mu}=\prox_{\gamma\regf}(z_{\rho\mu})$,
which is the unique local minimum to $(\mathcal{P}_{\rho\mu})$ over
$B(\bar{x};r_{\textup{max}})$. 
\end{thm}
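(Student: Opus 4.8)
The plan is to run an induction along the penalty sequence $\mathfrak{M}$, using as the engine the local contractivity of $\opT_\mu$ on $B(\bar x;r_{\textup{max}})$ from Proposition \ref{Operators-contractive}(i). Since that contractivity is only \emph{local}, the whole argument rests on an invariance claim: the warm-start point inherited from parameter $\mu$ lands inside $B(\bar x;r_{\textup{max}})$, and every iterate of the inner algorithm \eqref{eq:convenient-form-1} for $\rho\mu$ stays inside it. Once invariance is secured, Banach's fixed point theorem yields linear convergence $z^n_{\rho\mu}\to z_{\rho\mu}=\fix\opT_{\rho\mu}$ on the ball (a singleton by Proposition \ref{thm:fixed_point_relationship}), and I would transfer this to the primal iterates $x^n,y^n$ through the prox maps. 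Finiteness of $\mathfrak{M}$ is then handled separately via the stopping test and Proposition \ref{Attainment-of-local-min}(ii).

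First I would quantify the two perturbations that threaten invariance. Let $\widetilde z_\mu\in B(\bar x;r_{\textup{max}})$ be the $\epsilon$-approximate fixed point of $\opT_\mu$ used to initialize the $\rho\mu$-iteration, so $\|\widetilde z_\mu-\opT_\mu(\widetilde z_\mu)\|\le\epsilon$. Inserting $\opT_\mu(\widetilde z_\mu)$ and using the contraction factor $\kappa'$ gives the warm-start error $\|\widetilde z_\mu-z_\mu\|\le\epsilon/(1-\kappa')$. Next, writing $z_\mu-z_{\rho\mu}=\opT_\mu(z_\mu)-\opT_{\rho\mu}(z_{\rho\mu})$, inserting $\opT_{\rho\mu}(z_\mu)$, and combining the Lipschitz-in-$\mu$ bound of Proposition \ref{Operators-contractive}(ii) with the contraction in $x$ gives the parameter drift $\|z_\mu-z_{\rho\mu}\|\le\ell\,\mu(1-\rho)/(1-\kappa')$. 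By the triangle inequality the starting point $z^0=\widetilde z_\mu$ then obeys $\|z^0-z_{\rho\mu}\|\le[\epsilon+\ell\,\mu(1-\rho)]/(1-\kappa')$.

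Now I would cash in the margin supplied by Lemma \ref{thm:convergence_outer_iterations}. Writing $\sigma:=\min_{\nu\in(0,\mu_{\textup{max}}]}\{(r_{\textup{max}}-\|z_\nu-\bar x\|)-\psi\}>0$ from \eqref{eq:r_max_limit}, estimate \eqref{eq:rmax_zmu_psi_cond} gives $r_{\textup{max}}-\|z_{\rho\mu}-\bar x\|\ge\psi+\sigma$. Since $\epsilon<\overline\epsilon\le(1-\kappa')\psi$ we have $\epsilon/(1-\kappa')<\psi$, and choosing $\rho\in(0,1)$ close enough to $1$, namely $1-\rho<(1-\kappa')\sigma/(\ell\mu_{\textup{init}})$ (which suffices at every step because $\mu\le\mu_{\textup{init}}$ along $\mathfrak{M}$), forces $\ell\,\mu(1-\rho)/(1-\kappa')<\sigma$. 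Hence $\|z^0-z_{\rho\mu}\|<\psi+\sigma\le r_{\textup{max}}-\|z_{\rho\mu}-\bar x\|$, so the closed ball $\overline B(z_{\rho\mu};\|z^0-z_{\rho\mu}\|)$ is contained in $B(\bar x;r_{\textup{max}})$. A one-line induction then closes the invariance: if $z^n$ lies in this ball it lies in $B(\bar x;r_{\textup{max}})$, the contraction applies and $\|z^{n+1}-z_{\rho\mu}\|\le\kappa'\|z^n-z_{\rho\mu}\|$, so $z^{n+1}$ stays in the ball; iterating gives $\|z^n-z_{\rho\mu}\|\le(\kappa')^n\|z^0-z_{\rho\mu}\|$, i.e.\ linear convergence. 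Because $\prox_{\gamma\regf}$ is nonexpansive (firmly, by convexity of $f$) and $x_{\rho\mu}=\prox_{\gamma\regf}(z_{\rho\mu})$ is the unique local minimum of $(\mathcal P_{\rho\mu})$ on the ball by Proposition \ref{thm:fixed_point_relationship}, the iterates $x^{n+1}=\prox_{\gamma\regf}(z^n)$ converge linearly to $x_{\rho\mu}$; the $y$-iterates satisfy $y^{n+1}=\prox_{\gamma\regind}(2x^{n+1}-z^n)\to\prox_{\gamma\regind}(2x_{\rho\mu}-z_{\rho\mu})=x_{\rho\mu}$ at the same rate, since $\prox_{\gamma\regind}$ is Lipschitz on the region where $\regind$ is strongly convex and $\proj$ single-valued (the region guaranteed by Propositions \ref{Attainment-of-local-min} and \ref{Operators-contractive}). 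The output is again an $\epsilon$-approximate fixed point of $\opT_{\rho\mu}$ lying in $B(\bar x;r_{\textup{max}})$, which closes the induction; the base case is the assumed good initialization $z_{\textup{init}}\in B(\bar x;r_{\textup{max}})$ for $\mu_{\textup{init}}$, handled by the identical stay-in-the-ball argument.

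Finiteness of $\mathfrak{M}$ would follow from the stopping test: by Proposition \ref{Attainment-of-local-min}(ii), $x_{\mu_m}\to\bar x$, hence $\proj x_{\mu_m}\to\proj\bar x=\bar x$ by prox-regularity and $\morind(x_{\mu_m})\to 0$, so the gap $|(f(\proj x_{\mu_m})+(\beta/2)\|\proj x_{\mu_m}\|^2)-(f(x_{\mu_m})+\regind(x_{\mu_m}))|\to 0$ and drops below $\delta$ after finitely many outer iterations. The hard part will be exactly this invariance argument: because $\opT_\mu$ is only \emph{locally} contractive, I must control the warm-start inexactness $\epsilon$ and the $\mu$-drift $\|z_\mu-z_{\rho\mu}\|$ \emph{simultaneously} against the finite margin $r_{\textup{max}}-\|z_{\rho\mu}-\bar x\|$; the definition of $\overline\epsilon$ in \eqref{eq:epsilon_upper_bound} together with the freedom to shrink $1-\rho$ is precisely what makes both perturbations fit inside that margin at every step.
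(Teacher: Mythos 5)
Your proof is correct, and it reaches the theorem by a genuinely different anchoring of the local contraction argument than the paper uses. The paper centers everything at the warm start $\widetilde z$: it invokes the Dontchev--Rockafellar local contraction mapping theorem (Theorem \ref{Thm:local_contraction_mapping_principle}) on the ball $B(\widetilde z;\psi)\subseteq B(\bar x;r_{\textup{max}})$ and verifies its displacement condition $\|\opT_{\rho\mu}(\widetilde z)-\widetilde z\|\le\epsilon+\ell\,\overline\Delta\le(1-\kappa')\psi$, where the decrement $\overline\Delta=(1-\rho)\mu_{1}$ is sized against $\Delta=\bigl((1-\kappa')\psi-\epsilon\bigr)/\ell$; it never estimates the distance between consecutive fixed points. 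You instead center the analysis at the new fixed point $z_{\rho\mu}$: you bound the warm-start error $\|\widetilde z_{\mu}-z_{\mu}\|\le\epsilon/(1-\kappa')$ and, separately, the fixed-point drift $\|z_{\mu}-z_{\rho\mu}\|\le\ell\mu(1-\rho)/(1-\kappa')$, and fit their sum inside the margin $r_{\textup{max}}-\|z_{\rho\mu}-\bar x\|\ge\psi+\sigma$, charging $\epsilon/(1-\kappa')<\psi$ to the second term in (\ref{eq:epsilon_upper_bound}) and the drift to $\sigma$ from (\ref{eq:r_max_limit}) via your condition $1-\rho<(1-\kappa')\sigma/(\ell\mu_{\textup{init}})$; the stay-in-the-ball induction that follows is exactly the technique the paper uses to prove Lemma \ref{Convergence-result-for-fixed-mu}, so no external fixed-point theorem is needed. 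What each buys: the paper's route needs only a one-step displacement bound and avoids the drift estimate; yours is self-contained, makes the error budget fully explicit, and incidentally uses the correct approximate-fixed-point bound $\epsilon/(1-\kappa')$ where the paper's proof asserts $\|\widetilde z-z_{\mu_{m-1}}\|\le\epsilon$ without justification. Your two closing additions---that the inner loop's output is again an $\epsilon$-approximate fixed point inside $B(\bar x;r_{\textup{max}})$ (since $\|x^{n}-y^{n}\|=\|\opT_{\rho\mu}(z^{n-1})-z^{n-1}\|$), and that $\mathfrak M$ is finite because the stopping gap vanishes by Proposition \ref{Attainment-of-local-min}(ii)---are correct and are left implicit in the paper; the only glossed detail, shared with the paper's own proof, is that the transfer to the $y$-iterates needs $\prox_{\gamma\regind}$ to be Lipschitz at the points $2x^{n+1}-z^{n}$.
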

\begin{proof}
See Appendix \ref{subsec:Proof-to-Theorem}. 
\end{proof}
From Theorem \ref{Theorem_Main-convergence-result}, we see that an
$\epsilon$-approximate fixed point of $\opT_{\rho\mu}$ over $B(\bar{x};r_{\textup{max}})$
can be computed and then used to initialize the next inner algorithm
for penalty parameter $\rho^{2}\mu$; this chain of logic makes each
inner algorithm linearly converge to the corresponding locally optimal solution.
Finally, for the convergence of the first inner algorithm we have
the following result, which states that if the initial point $z_{\textrm{init}}$
is not ``too far away" from $B(\bar{x};r_{\textrm{max}})$, then the
first inner algorithm of \textsf{NExOS }for penalty parameter $\mu_{1}$
converges to a locally optimal solution of $(\mathcal{P}_{\mu_{1}}).$

\begin{lem}[Convergence of the first inner algorithm\label{Convergence-result-for-fixed-mu}]
 Let $\bar{x}$ be a local minimum to problem (\ref{eq:original_problem-1}),
where Assumptions \ref{assum:strong_convex_smooth} and \ref{assum:unique_strongly_convex}
hold. Let $z_{\textup{init}}$ be the chosen initial point for $\mu_{1}\coloneqq\mu_{\textup{init}}$
such that $\overline{B}(z_{\mu_{1}};\|z_{\textup{init}}-z_{\mu_{1}}\|)\subseteq B(\bar{x};r_{\textup{max}})$,
where $z_{\mu_{1}}$ be the corresponding unique fixed point of $\opT_{\mu_{1}}$.
Then, $z_{\mu_{1}}^{n}$ linearly converges to $z_{\mu_{1}}$ and
both $x_{\mu_{1}}^{n}$ and $y_{\mu_{1}}^{n}$ linearly converge to
the unique local minimum $x_{\mu_{1}}$ of $(\mathcal{P}_{\mu_{1}})$
over $B(\bar{x};r_{\textup{max}})$. 
\end{lem}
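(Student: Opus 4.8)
The plan is to treat the inner recursion $z^{n+1}=\opT_{\mu_1}(z^n)$, initialized at $z^0=z_{\textup{init}}$, as a \emph{localized} Banach fixed-point iteration. By Proposition \ref{Operators-contractive}(i) the operator $\opT_{\mu_1}$ is $\kappa'$-contractive on $B(\bar{x};r_{\textup{max}})$ with $\kappa'\in(0,1)$, and by Proposition \ref{thm:fixed_point_relationship} the point $z_{\mu_1}$ is its unique fixed point in that ball. Since the contraction is only guaranteed \emph{locally}, the classical global fixed-point theorem does not apply verbatim; the crux is to verify that the entire iterate sequence $\{z^n\}$ never leaves the region where the contraction holds, after which linear convergence follows by iterating the contraction bound.

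First I would establish this invariance by induction, showing $z^n\in\overline{B}\bigl(z_{\mu_1};\|z_{\textup{init}}-z_{\mu_1}\|\bigr)$ for every $n$. The base case $n=0$ is immediate because $z^0=z_{\textup{init}}$. For the inductive step, assuming $z^n$ lies in this closed ball---which by hypothesis is contained in $B(\bar{x};r_{\textup{max}})$---I can apply contractivity at $z^n$ together with the fixed-point identity $\opT_{\mu_1}(z_{\mu_1})=z_{\mu_1}$ to obtain
\[
\|z^{n+1}-z_{\mu_1}\|=\|\opT_{\mu_1}(z^n)-\opT_{\mu_1}(z_{\mu_1})\|\leq\kappa'\|z^n-z_{\mu_1}\|\leq\kappa'\|z_{\textup{init}}-z_{\mu_1}\|,
\]
so $z^{n+1}$ stays in the same closed ball. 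Unrolling the contraction then yields $\|z^n-z_{\mu_1}\|\leq(\kappa')^n\|z_{\textup{init}}-z_{\mu_1}\|$, which is the claimed linear convergence of $z^n$ to $z_{\mu_1}$.

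Next I would transfer this rate to the $x$- and $y$-iterates. Because $f$ is convex (indeed $\alpha$-strongly convex and $L$-smooth by Assumption \ref{assum:strong_convex_smooth}), $\prox_{\gamma\regf}$ is nonexpansive, and Proposition \ref{thm:fixed_point_relationship} gives $x_{\mu_1}=\prox_{\gamma\regf}(z_{\mu_1})$. Since $x^{n+1}=\prox_{\gamma\regf}(z^n)$, nonexpansiveness immediately gives $\|x^{n+1}-x_{\mu_1}\|\leq\|z^n-z_{\mu_1}\|\leq(\kappa')^n\|z_{\textup{init}}-z_{\mu_1}\|$, so $x^n\to x_{\mu_1}$ linearly. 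For $y^n$, I would use the last update line $z^{n+1}=z^n+y^{n+1}-x^{n+1}$ to write $y^{n+1}-x_{\mu_1}=(z^{n+1}-z^n)+(x^{n+1}-x_{\mu_1})$, and bound each summand by a geometric factor, giving
\[
\|y^{n+1}-x_{\mu_1}\|\leq\|z^{n+1}-z^n\|+\|x^{n+1}-x_{\mu_1}\|\leq(\kappa')^n\bigl(2+\kappa'\bigr)\|z_{\textup{init}}-z_{\mu_1}\|\to0,
\]
again at a linear rate, consistent with the fixed-point relation $y_{\mu_1}=x_{\mu_1}$.

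The main obstacle is the invariance step. Because contractivity of $\opT_{\mu_1}$ is only available inside $B(\bar{x};r_{\textup{max}})$, one cannot invoke a global argument and must instead rule out the escape of the iterates from this region. The containment hypothesis $\overline{B}\bigl(z_{\mu_1};\|z_{\textup{init}}-z_{\mu_1}\|\bigr)\subseteq B(\bar{x};r_{\textup{max}})$ is exactly what makes the induction close: a contraction shrinks distances toward its fixed point, so every iterate stays within the closed ball of radius $\|z_{\textup{init}}-z_{\mu_1}\|$ centered at $z_{\mu_1}$, and this ball was assumed to sit in the contraction region. Everything else is a routine propagation of the geometric decay through the single-valued, nonexpansive map $\prox_{\gamma\regf}$ and the affine $z$-update.
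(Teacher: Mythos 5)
Your proposal is correct and follows essentially the same route as the paper: an induction showing the iterates never leave the closed ball $\overline{B}\bigl(z_{\mu_{1}};\|z_{\textup{init}}-z_{\mu_{1}}\|\bigr)\subseteq B(\bar{x};r_{\textup{max}})$, after which the local $\kappa'$-contractivity of $\opT_{\mu_{1}}$ (Proposition \ref{Operators-contractive}) and the fixed-point identity yield geometric decay. Your explicit transfer of the rate to $x^{n}$ and $y^{n}$ via nonexpansiveness of $\prox_{\gamma\regf}$ and the update $z^{n+1}=z^{n}+y^{n+1}-x^{n+1}$ is a welcome filling-in of a step the paper only gestures at by citing the reasoning in Theorem \ref{Theorem_Main-convergence-result}.
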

\begin{proof}
See Appendix \ref{subsec:Proof-to-Theorem-fixed_mu}.
\end{proof}

We now discuss what can be said if the initial point $z_{\textrm{init}}$
does not necessarily satisfy the conditions stated in Theorem \ref{Theorem_Main-convergence-result}
or Lemma \ref{Convergence-result-for-fixed-mu}. Unfortunately, in
such a situation, we can only show subsequential convergence of the iterates.

\begin{thm}[Convergence result for \textsf{NExOS} for $z_{\textrm{init}}$ that
is far away from $B(\bar{x};r_{\textrm{max}})$\label{Theorem_Main-convergence-result-weak}]
Suppose, the proximal parameter $\gamma$ is selected to satisfy $0<\gamma<1/L$
and let $z_{\textup{init}}$ be the any arbitrarily chosen initial
point that does not satisfy the conditions of Lemma  \ref{Convergence-result-for-fixed-mu}.
Then, in this setup, \textsf{NExOS} will construct a finite sequence
of strictly decreasing penalty parameters $\mathfrak{M}=\{\mu_{1}\coloneqq\mu_{\textup{init}},\mu_{2}=\rho\mu_{1},\mu_{3}=\rho\mu_{2},\ldots\},$
and $\rho\in(0,1)$, such that we have the following recursive convergence
property. For any $\mu\in\mathcal{M}$, if an $\epsilon$-approximate
fixed point of $\opT_{\mu}$ over $B(\bar{x};r_{\textup{max}})$ is
used to initialize the inner algorithm for penalty parameter $\rho\mu$,
then the corresponding inner algorithm iterates $z_{\rho\mu}^{n}$
subsequentially converges to $z_{\rho\mu}$ that is a fixed point
of $\opT_{\rho\mu}$, and the iterates $x_{\rho\mu}^{n},y_{\rho\mu}^{n}$subsequentially
converge to a first-order stationary point to $(\mathcal{P}_{\rho\mu})$
denoted by $x_{\rho\mu}=\prox_{\gamma\regf}(z_{\rho\mu})$ with the
rate $\min_{n\leq k}\|\nabla\left(f+\regind\right)(x_{\rho\mu}^{n})\|\leq\frac{1-\gamma L}{2L}o(1/\sqrt{k}).$
\end{thm}
\begin{proof}
See Appendix \ref{subsec:Proof-to-Theorem-NExOS-weak}.
\end{proof}

\section{Numerical experiments \label{sec:Numerical-experiments}}

In this section, we apply \textsf{NExOS} to the following nonconvex
optimization problems of substantial current interest for both
synthetic and real-world datasets: sparse regression problem in
\S\ref{subsec:Regressor-selection}, affine rank minimization problem
in \S\ref{subsec:Affine-rank-minimization}, and low-rank factor
analysis problem in \S\ref{subsec:Factor-analysis-problem}. We illustrate
that \textsf{NExOS }produces solutions that are either competitive
or better in comparison with the other approaches on different performance
measures. We have implemented \textsf{NExOS} in\texttt{ NExOS.jl }solver,
which is an open-source software package written in the \texttt{Julia
}programming language. \texttt{NExOS.jl} can address any optimization
problem of the form of problem (\ref{eq:original_problem-1}). The code and documentation
are available online at: \url{https://github.com/Shuvomoy/NExOS.jl}.

{
In our numerical experiments, we present a comprehensive evaluation of \textsf{NExOS}, showing both statistical and optimization-theoretic evaluations. This dual approach is deliberate---while our primary contribution is in developing optimization methodology, the optimization problems considered in this section—such as sparse regression, affine rank minimization, matrix completion, and factor analysis—are deeply rooted in the fields of statistics and machine learning \cite{Hastie2015,friedman2001elements,hastie2017extended,jain2017non,Fazel2008,bertsimas2017certifiably}. Therefore, our numerical experiments are constructed not only to demonstrate \textsf{NExOS} efficiently computing local minima for nonconvex problems but also to highlight its ability to provide statistically robust solutions, which are also important in the application context. This dual capacity is of paramount importance for practical applications in statistics and machine learning, underlining the algorithm's versatility and effectiveness. By addressing these aspects, we aim to illustrate the broad applicability of \textsf{NExOS} across optimization-theoretic and applied statistical or learning domains. Here, we stress that while our optimization-theoretic evaluations are grounded in both theory
and empirical experiments, the statistical evaluations of \textsf{NExOS} are based on empirical observations made in the context of the
experiments conducted in this section.
}

To compute the proximal operator of a function $f$ with closed
form or easy-to-compute solution, \texttt{NExOS.jl} uses the open-source
package \url{ProximalOperators.jl} \cite{lorenzo_stella_2020_4020559}.
When $f$ is a constrained convex function (\emph{i.e.}, a convex
function over some convex constraint set) with no closed form proximal
map, \texttt{NExOS.jl} computes the proximal operator by using the
open-source \texttt{Julia} package\texttt{ JuMP }\cite{dunning2017jump}
and any of the commercial or open-source solver supported by it. The
set $\mathcal{X}$ can be any prox-regular nonconvex set fitting our
setup. Our implementation is readily extensible using \texttt{Julia}
abstract types so that the user can add support for additional convex
functions and prox-regular sets. The numerical study is executed on a MacBook Pro laptop with Apple M1 Max chip with 32 GB memory.  The datasets considered in this section, unless specified otherwise, are available online at:
\url{http://tinyurl.com/NExOSDatasets}.

In applying \textsf{NExOS}, we use the following values that we found to be the best performing based on our empirical observations. We take the
starting value of $\mu$ to be $2$, and reduce this value with a
multiplicative factor of $0.5$ during each iteration of the outer
loop until the termination criterion is met. The value of the proximal
parameter $\gamma$ is chosen to be $10^{-3}$. We initialize our
iterates at $\mathbf{0}.$ Maximum number of inner iterations for
a fixed value of $\mu$ is taken to be 1000. The tolerance for the
fixed point gap for each penalized problem is taken to be $10^{-4}$ and the tolerance for the
termination criterion is taken to be $10^{-6}$.

Value of $\beta$
is taken to be $10^{-8}$ for the following reasons. In \S\ref{sec:Convergence-analysis}, we showed that the presence of $\beta > 0$, ensures that each penalized subproblem is locally strongly convex and smooth, having a unique local minimum. This, in turn, helps to establish linear convergence of the inner algorithm for each subproblem. We empirically demonstrate in this section that the impact of the condition $\beta>0$, despite being critical in the theoretical analysis of our algorithm, seems to only be marginal as it can be made to be as small as $10^{-8}$. We use this extremely small value of $\beta$ to stress-test \textsf{NExOS} empirically and show that even for such a small value of $\beta$, our algorithm still works well in practice. Here, we stress that the default value of $\beta = 10^{-8}$ used in our numerical experiments should be viewed as a mere heuristic that seems to empirically work for the numerical experiments that we considered in our paper. We leave a more methodical investigation of the smallest admissible values of $\beta$ or the effect of completely omitting it to future work.

\subsection{Sparse regression\label{subsec:Regressor-selection}}

In (\ref{eq:reg-sel}), we set $\mathcal{X}:=\{x\mid\|x\|_{\infty}\leq\Gamma,\mathbf{card}(x)\leq k\},$
and $f(x):=\|Ax-b\|_{2}^{2}$. A projection onto $\mathcal{X}$ can
be computed using the formula in \cite[\S 2.2]{jain2017non}, whereas
the proximal operator for $f$ can be computed using the formula in
\cite[\S 6.1.1]{boydProx}. Now we are in a position to apply \textsf{NExOS}
to this problem.

\subsubsection{Synthetic dataset: comparison with \textsf{elastic net} and \texttt{Gurobi}\label{subsec:Synthetic-dataset:-comparison}}

We compare the solution found by \textsf{NExOS} with the solutions
found by \textsf{elastic net} (\textsf{glmnet} used for the implementation) and spatial branch-and-bound algorithm (\textsf{Gurobi} used for the implementation). \textsf{Elastic net} is a well-known method for computing an approximate solution to the regressor
selection problem (\ref{eq:reg-sel}), which empirically works extremely well in recovering support of the original signal. On the other hand, \textsf{Gurobi}'s spatial branch-and-bound algorithm is guaranteed to compute a globally optimal solution to (\ref{eq:reg-sel}).
\textsf{NExOS} is guaranteed to provide a locally optimal solution
under regularity conditions; so to investigate 
how close \textsf{NExOS} can get to the globally minimum value we consider a parallel implementation of 
\textsf{NExOS} running on multiple (20) worker processes, where each process runs \textsf{NExOS} with different random initialization, and we take the solution associated
with the least objective value. 

\paragraph{Elastic net}

\textsf{Elastic net} is a well-known method for solving the regressor
selection problem, that computes an approximate solution as follows.
First, \textsf{elastic net} solves: 
\begin{equation}
\begin{array}{ll}
\textup{minimize} & \|Ax-b\|_{2}^{2}+\lambda\|x\|_{1}+(\beta/2)\|x\|_{2}^{2},\end{array}\label{eq:lasso}
\end{equation}
where $\lambda$ is a parameter that is related to the sparsity of
the decision variable $x\in\mathbf{R}^{d}$. To solve (\ref{eq:lasso}),
we have used the open-source \texttt{R} pacakge \textsf{glmnet} \cite{friedman2009glmnet}.

To compute $\lambda$ corresponding
to $\mathbf{card}(x)\leq k$ we follow the
method proposed in \cite[\S 3.4]{friedman2001elements} and \cite[Example 6.4]{boyd2004convex}.
We solve the problem (\ref{eq:lasso}) for different values of $\lambda$,
and find the smallest value of $\lambda$ for which $\mathbf{card}(x)\leq k$,
and we consider the sparsity pattern of the corresponding solution
$\tilde{x}.$ Let the index set of zero elements of $\tilde{x}$ be
$\mathcal{Z}$, where $\mathcal{Z}$ has $d-k$ elements. Then the
\textsf{elastic net} solves: 
\begin{align}
\begin{array}{ll}
\textup{minimize} & \left\Vert Ax-b\right\Vert _{2}^{2}+(\beta/2)\|x\|_{2}^{2}\\
\textup{subject to} & \left(\forall j\in\mathcal{Z}\right)\quad x_{j}=0,
\end{array}\label{eq:lasso_heuristic_stage_2}
\end{align}
where $x\in\mathbf{R}^{d}$ is the decision variable. Solving this problem corresponds to solving a positive semidefinite linear system, which we solve using the built-in \texttt{LinearAlgebra} package in \texttt{Julia}. %

\paragraph{Spatial branch-and-bound algorithm}

The problem (\ref{eq:reg-sel}) can also be modeled equivalently as
the following mixed integer quadratic optimization problem \cite{bertsimas2016best}:
\[
\begin{array}{ll}
\textup{minimize} & \|Ax-b\|_{2}^{2}+(\beta/2)\|x\|_{2}^{2}\\
\textup{subject to} & |x_{i}|\leq\Gamma y_{i},\quad i=1,\ldots,d\\
 & \sum_{i=1}^{d}y_{i}\leq k, \quad x\in\mathbf{R}^{d},\quad y\in\{0,1\}^{d},
\end{array}
\]
which can be solved to a certifiable global optimality using \textsf{Gurobi}'s spatial branch-and-bound algorithm.

\paragraph{Data generation process and setup}

The data generation procedure is similar to \cite{Diamond2018} and \cite{hastie2017extended}. We consider two signal-to-noise ratio (SNR) regimes: SNR 1 and SNR 6, where for each SNR, we vary $m$ from 25 to 50, and for each
value of $m,$ we generate 50 random problem instances. We limit the size of the problems because
the solution time by \textsf{Gurobi}'s spatial branch-and-bound algorithm becomes too large for comparison
if we go beyond the aforementioned size. For a certain value
of $m$, the matrix $A\in\mathbf{R}^{m\times2m}$ is generated from
an independent and identically distributed normal distribution with
$\mathcal{N}(0,1)$ entries. We choose $b=A\widetilde{x}+v$, where
$\widetilde{x}$ is drawn uniformly from the set of vectors satisfying
$\mathbf{card}(\widetilde{x})\leq\lfloor m/5\rceil$ and $\|\widetilde{x}\|_{\infty}\leq \Gamma$ with $\Gamma = 1$.
The vector $v$ corresponds to noise, and is drawn from the distribution
$\mathcal{N}(0,\sigma^{2}I),$ where $\sigma^{2}=\|A\widetilde{x}\|_{2}^{2}/(\textrm{SNR}^2/m)$,
which keeps the signal-to-noise ratio to approximately equal to $\textrm{SNR}$. We consider a parallel implementation of 
\textsf{NExOS} where we have 100 runs of \textsf{NExOS} distrubuted over 20 independent worker processes on 10 cores. Each run is initialized with a random initial points chosen from the uniform distribution over the
interval $[-\Gamma,\Gamma]$. Gurobi's spatial branch-and-bound algorithm also uses 10 cores.

\begin{figure}

\centering
 \subfloat[Support recovery (SNR 6)]{
  \label{fig:Sup-rec-snr-6}
 \begin{tikzpicture}[spy using outlines=
	{rectangle, magnification=2.5, connect spies}]
\begin{axis}[legend style= {at={(0.29,0.95)},anchor=north, font=\tiny}, scale only axis, height=2.5cm, width=0.2*\textwidth, xmajorgrids, ymajorgrids, font=\tiny, ticklabel style = {font=\tiny}, error bars/y dir=both, error bars/y explicit, error bars/error bar style={line width=.1pt}, xlabel={$m$ (size of the matrix)}, ylabel={Support recovery (\%)}, ymin={60}, ymax={105}, xmin={14}, xmax={52}]
    \addplot+[no marks, style={{ultra thick}}, color={red}, each nth point=3]
        coordinates {
            (15,94.4666666666667) +- (0,0.7228690632055588)
            (16,94.8125) +- (0,0.4429327412433022)
            (17,95.47058823529407) +- (0,0.4219290827103591)
            (18,92.6666666666666) +- (0,0.6083735840233907)
            (19,93.57894736842108) +- (0,0.5533344208413263)
            (20,94.1) +- (0,0.5089003751587678)
            (21,94.33333333333336) +- (0,0.4804956023513007)
            (22,95.40909090909092) +- (0,0.39225294313088205)
            (23,94.2173913043478) +- (0,0.43304889474666775)
            (24,93.79166666666666) +- (0,0.45910571900651526)
            (25,94.32) +- (0,0.3890753035554102)
            (26,94.84615384615385) +- (0,0.35370278987760645)
            (27,95.22222222222217) +- (0,0.38186985345536767)
            (28,93.42857142857143) +- (0,0.3832987749182233)
            (29,93.58620689655176) +- (0,0.4852654327497435)
            (30,94.76666666666668) +- (0,0.4124205413747548)
            (31,94.00000000000001) +- (0,0.3092711840656686)
            (32,95.34375) +- (0,0.33849178727263146)
            (33,93.30303030303034) +- (0,0.41316485411905834)
            (34,94.64705882352935) +- (0,0.38682854744483747)
            (35,93.94285714285718) +- (0,0.32540622739583236)
            (36,94.83333333333329) +- (0,0.3572310296802193)
            (37,94.72972972972981) +- (0,0.3522835442257655)
            (38,94.05263157894738) +- (0,0.3388467470449248)
            (39,94.33333333333336) +- (0,0.36462989061619905)
            (40,94.225) +- (0,0.3348202380931218)
            (41,94.48780487804875) +- (0,0.30223342776587353)
            (42,95.02380952380958) +- (0,0.3344594334025809)
            (43,94.25581395348836) +- (0,0.3312443972977687)
            (44,93.70454545454544) +- (0,0.3952647061021667)
            (45,94.95555555555558) +- (0,0.2442794743264844)
            (46,94.84782608695643) +- (0,0.28935367755011865)
            (47,94.42553191489361) +- (0,0.32247506893312733)
            (48,94.1875) +- (0,0.2491659841184651)
            (49,94.734693877551) +- (0,0.28169964616166)
            (50,94.66) +- (0,0.30587445737339586)
        }
        ;
    \addplot+[no marks, style={{ultra thick}}, color={green}, each nth point=3]
        coordinates {
            (15,98.53333333333335) +- (0,0.39452028908967585)
            (16,98.5) +- (0,0.3813242232868325)
            (17,98.1176470588235) +- (0,0.4265068080823364)
            (18,96.77777777777776) +- (0,0.5742873814325566)
            (19,98.21052631578951) +- (0,0.44124513537206606)
            (20,98.1) +- (0,0.40127348300537524)
            (21,98.00000000000004) +- (0,0.4321736101841792)
            (22,98.13636363636363) +- (0,0.3655446219490278)
            (23,97.13043478260869) +- (0,0.44112499141775646)
            (24,96.83333333333331) +- (0,0.4219058368254605)
            (25,97.28) +- (0,0.40309010488110597)
            (26,97.46153846153844) +- (0,0.37443351730551205)
            (27,97.85185185185185) +- (0,0.36794367514737997)
            (28,97.28571428571429) +- (0,0.42759850693381535)
            (29,97.51724137931036) +- (0,0.3820831414712379)
            (30,97.36666666666665) +- (0,0.38712264927052675)
            (31,97.29032258064521) +- (0,0.38401228858985537)
            (32,97.6875) +- (0,0.33180186028682224)
            (33,97.93939393939395) +- (0,0.34020576111495254)
            (34,97.23529411764703) +- (0,0.4065872404505193)
            (35,97.77142857142859) +- (0,0.2861803778442138)
            (36,97.13888888888889) +- (0,0.3084280563781435)
            (37,97.86486486486493) +- (0,0.33679375342923346)
            (38,98.0789473684211) +- (0,0.27933621034977313)
            (39,97.7948717948718) +- (0,0.2931318538277297)
            (40,97.55) +- (0,0.24212073223826128)
            (41,97.46341463414636) +- (0,0.33391354845187265)
            (42,97.7619047619048) +- (0,0.2670447132842247)
            (43,97.30232558139534) +- (0,0.3475534041361335)
            (44,97.65909090909088) +- (0,0.2911416336336653)
            (45,97.68888888888888) +- (0,0.3042323441450398)
            (46,97.78260869565216) +- (0,0.24247479870736793)
            (47,97.48936170212767) +- (0,0.30811734705104404)
            (48,97.16666666666664) +- (0,0.3247186118329966)
            (49,96.89795918367346) +- (0,0.31530042762905586)
            (50,97.54) +- (0,0.35365150608346363)
        }
        ;
   	\addplot[color=black, dashed, ultra thick, each nth point=1] coordinates {(15,100) (50,100)};
   	    \addplot+[no marks, style={{ultra thick}}, color={blue}, each nth point=3]
        coordinates {
            (15,98.53333333333335) +- (0,0.39452028908967585)
            (16,98.5) +- (0,0.3813242232868325)
            (17,98.35294117647055) +- (0,0.37730998858224574)
            (18,96.77777777777776) +- (0,0.5742873814325566)
            (19,98.00000000000004) +- (0,0.47290782536891357)
            (20,98.1) +- (0,0.40127348300537524)
            (21,98.09523809523813) +- (0,0.43024185852631)
            (22,98.13636363636363) +- (0,0.3655446219490278)
            (23,96.95652173913042) +- (0,0.45218073846462836)
            (24,96.75) +- (0,0.41734638435614524)
            (25,97.12) +- (0,0.42822843417589035)
            (26,97.46153846153844) +- (0,0.37443351730551205)
            (27,97.85185185185182) +- (0,0.3679436751473803)
            (28,97.21428571428572) +- (0,0.4363960159253691)
            (29,97.37931034482762) +- (0,0.37593663110027437)
            (30,97.36666666666665) +- (0,0.38712264927052675)
            (31,97.22580645161295) +- (0,0.40184754574362463)
            (32,97.75) +- (0,0.3098098495160263)
            (33,98.00000000000001) +- (0,0.307450751594194)
            (34,97.35294117647055) +- (0,0.38737581753331457)
            (35,97.77142857142859) +- (0,0.2975955287111539)
            (36,97.13888888888889) +- (0,0.332978836441241)
            (37,98.02702702702705) +- (0,0.3303584648746291)
            (38,98.0789473684211) +- (0,0.28927832486031013)
            (39,97.6923076923077) +- (0,0.3129671701581512)
            (40,97.5) +- (0,0.25753937681885636)
            (41,97.21951219512196) +- (0,0.35540069686411135)
            (42,97.71428571428577) +- (0,0.2717684946318118)
            (43,97.3953488372093) +- (0,0.3230630107945042)
            (44,97.5681818181818) +- (0,0.2830640067386636)
            (45,97.55555555555559) +- (0,0.2926839510251711)
            (46,97.82608695652172) +- (0,0.2385448990021305)
            (47,97.44680851063829) +- (0,0.2978103030739426)
            (48,97.04166666666666) +- (0,0.3371906913873465)
            (49,97.06122448979589) +- (0,0.32548771398140786)
            (50,97.5) +- (0,0.32103150077491754)
        }
        ;
  \coordinate (spypoint) at (axis cs:25,97.5);
  \coordinate (magnifyglass) at (axis cs:35,80);
\end{axis}
\spy [black, width=1.6cm,height=0.95cm] on (spypoint)
   in node[fill=white] at (magnifyglass);
\end{tikzpicture}
 } \hfil
  \subfloat[Objective value (SNR 6)]{
  \label{fig:Ratio-of-objective-snr-6}
 \begin{tikzpicture}
\begin{axis}[legend style= {at={(0.29,0.95)},anchor=north, font=\tiny}, scale only axis, height=2.5cm, width=0.2*\textwidth, xmajorgrids, ymajorgrids, font=\tiny, ticklabel style = {font=\tiny}, error bars/y dir=both, error bars/y explicit, error bars/error bar style={line width=.1pt}, xlabel={$m$ (size of the matrix)}, ylabel={Normalized objective value}, ytick={0,1,3,5, 7, 9}]
    \addplot+[no marks, style={{ultra thick}}, color={blue}, each nth point=3]
        coordinates {
            (15,1.0016477001279156) +- (0,0.0008004696219931773)
            (16,1.0007816724964709) +- (0,0.00021442087741068317)
            (17,1.0012113095270023) +- (0,0.0006633659323270895)
            (18,1.003889726408251) +- (0,0.0027943198737886647)
            (19,1.002408513627724) +- (0,0.001353465597081712)
            (20,1.004680554157388) +- (0,0.0029798130990963596)
            (21,1.0009446286065742) +- (0,0.0007050682205491522)
            (22,1.002848446108547) +- (0,0.002236570012621716)
            (23,1.0050032176692039) +- (0,0.0029423727600295553)
            (24,1.0131358345282286) +- (0,0.0073650950933779194)
            (25,1.003341458439529) +- (0,0.001523278935205372)
            (26,1.0027320647606004) +- (0,0.0013220456451042428)
            (27,1.012998408867843) +- (0,0.0065768217073879885)
            (28,1.0525295070577914) +- (0,0.04448299314556211)
            (29,1.0103087108885378) +- (0,0.004508632553012121)
            (30,1.0075076520723407) +- (0,0.0033198862088302684)
            (31,1.0097619055243896) +- (0,0.0038460834230687156)
            (32,1.009008473085712) +- (0,0.003961090701528821)
            (33,1.0180133616729619) +- (0,0.007795110510259388)
            (34,1.016625679525326) +- (0,0.0070681190294682944)
            (35,1.014720139387194) +- (0,0.004463193837959137)
            (36,1.010832078372932) +- (0,0.004080535502334341)
            (37,1.0128280678324537) +- (0,0.005152816435016701)
            (38,1.0085178494581386) +- (0,0.003465368325252512)
            (39,1.0172165240642903) +- (0,0.004953679653959255)
            (40,1.0055159804475322) +- (0,0.0020688098482151075)
            (41,1.0184003292202406) +- (0,0.008353906323396663)
            (42,1.0075581608557607) +- (0,0.0024959121162003934)
            (43,1.0231632650656035) +- (0,0.007648873221779811)
            (44,1.0147477851301216) +- (0,0.004648183649356183)
            (45,1.018009422277137) +- (0,0.006546889902704262)
            (46,1.0125571271079865) +- (0,0.004399880976927543)
            (47,1.0211777083456544) +- (0,0.006984008495119486)
            (48,1.0285911598764104) +- (0,0.006997769850818383)
            (49,1.032393779162266) +- (0,0.009297908584120955)
            (50,1.022571906780459) +- (0,0.005976672003552044)
        }
        ;
    \addplot+[no marks, style={{ultra thick}}, color={red}, each nth point=3]
        coordinates {
            (15,6.559077912475141) +- (0,1.3547933380426207)
            (16,3.483628192900611) +- (0,0.6381701272295829)
            (17,3.857945672738636) +- (0,0.7537746584427883)
            (18,4.109863661487146) +- (0,0.7416860644162007)
            (19,3.780001577837952) +- (0,0.6507906948283803)
            (20,3.665983981608242) +- (0,0.632425431043273)
            (21,3.4232443823042105) +- (0,0.6243661872250847)
            (22,2.9513796657049585) +- (0,0.4159274653465091)
            (23,2.273336562109921) +- (0,0.22692576853027097)
            (24,2.6108264730076844) +- (0,0.28494272960461475)
            (25,2.421209277382266) +- (0,0.2633554757021143)
            (26,2.54981563081199) +- (0,0.34976538830581827)
            (27,2.653911291064773) +- (0,0.37398932137232177)
            (28,4.106307448880392) +- (0,0.5115000817455884)
            (29,3.4886901493816924) +- (0,0.4991198791140536)
            (30,2.7116693526966444) +- (0,0.3211104719075026)
            (31,2.654470042929211) +- (0,0.38387498632483064)
            (32,2.2726971635051894) +- (0,0.2979039350636274)
            (33,3.6375896253923345) +- (0,0.41853479830639706)
            (34,2.6010796512774976) +- (0,0.34689003188590595)
            (35,3.1511229191940293) +- (0,0.37971611252140297)
            (36,2.8608388022342526) +- (0,0.5968320375678827)
            (37,3.094508145560901) +- (0,0.45552721512596567)
            (38,3.4499628997321157) +- (0,0.37496320454391174)
            (39,2.694737704061174) +- (0,0.32024099296810205)
            (40,2.6250394016159238) +- (0,0.2935680286927622)
            (41,2.58828406957737) +- (0,0.2867318963378842)
            (42,2.3924700172079927) +- (0,0.23919744710597027)
            (43,3.256398790864546) +- (0,0.3909239344709963)
            (44,3.498284922119676) +- (0,0.48829320571019325)
            (45,2.181072047937966) +- (0,0.20634186040105404)
            (46,2.6620284113422525) +- (0,0.30606049725787976)
            (47,3.109833494864679) +- (0,0.40768481944885404)
            (48,2.5689255800537776) +- (0,0.2613209720925204)
            (49,2.147744037970927) +- (0,0.17527463439438404)
            (50,2.127228366296508) +- (0,0.2190670193841723)
        }
        ;
        	\addplot[color=green, dashed, ultra thick, each nth point=1] coordinates {(15,1) (50,1)};
\end{axis}
\end{tikzpicture}} \hfil
 \subfloat[Solution time (s) (SNR 6)]{
  \label{fig:Sol-time-snr-6}
 \begin{tikzpicture}
\begin{axis}[legend style= {at={(0.29,0.95)},anchor=north, font=\tiny}, scale only axis, height=2.5cm, width=0.2*\textwidth, xmajorgrids, ymajorgrids, font=\tiny, ticklabel style = {font=\tiny}, error bars/y dir=both, error bars/y explicit, error bars/error bar style={line width=.1pt}, xlabel={$m$ (size of the matrix)}, ylabel={Solution time (s) in log scale}, ymode={log}, ytick = {0.001, 0.01, 0.1, 1, 10}, each nth point=3]
    \addplot+[no marks, style={{ultra thick}}, color={blue}]
        coordinates {
            (15,0.22452221937999992) +- (0,0.07965675780124416)
            (16,0.15188546915999998) +- (0,0.0023774054121859316)
            (17,0.16006060516) +- (0,0.0028930999990188027)
            (18,0.19923833049999998) +- (0,0.0030815483565691665)
            (19,0.20336771056000003) +- (0,0.0024998252920248416)
            (20,0.21256400462) +- (0,0.002486760524957443)
            (21,0.21941647514000007) +- (0,0.002684138958056928)
            (22,0.23236930712000003) +- (0,0.0028920901427453216)
            (23,0.24884966186000004) +- (0,0.0028367418326814687)
            (24,0.25336180222) +- (0,0.003085671660087021)
            (25,0.26712144686) +- (0,0.0029627740189445105)
            (26,0.27040944466) +- (0,0.002647503874631151)
            (27,0.2810253711800001) +- (0,0.002859167993643402)
            (28,0.32103989012) +- (0,0.003188747283375379)
            (29,0.32856541478000006) +- (0,0.0036367836981929208)
            (30,0.3320751148400001) +- (0,0.002854540233736135)
            (31,0.3543948577399999) +- (0,0.00416798804270667)
            (32,0.3482534372) +- (0,0.0030476551863871247)
            (33,0.38204984864) +- (0,0.003258111468988056)
            (34,0.3900518984) +- (0,0.0039760508591930935)
            (35,0.38620869890000004) +- (0,0.0034701938611674545)
            (36,0.39226796045999995) +- (0,0.002750962774152172)
            (37,0.40926500660000004) +- (0,0.002868847718347781)
            (38,0.4525313935399998) +- (0,0.003683545480318558)
            (39,0.47299944872000005) +- (0,0.0028930029800953846)
            (40,0.45869729182) +- (0,0.002362883592376687)
            (41,0.45767342803999994) +- (0,0.0026745674882154733)
            (42,0.4552500711599999) +- (0,0.003392266012150523)
            (43,0.45428519255999994) +- (0,0.004534174403896932)
            (44,0.45204327818) +- (0,0.0046926382435217105)
            (45,0.4497776900799999) +- (0,0.0032327676014954417)
            (46,0.46215229298) +- (0,0.004225070844569965)
            (47,0.45953478482) +- (0,0.004161316717272426)
            (48,0.62286300664) +- (0,0.0026329731304095914)
            (49,0.6306366668800001) +- (0,0.0029625129775017658)
            (50,0.6236162439399999) +- (0,0.003140360840016195)
        }
        ;
    \addplot+[no marks, style={{ultra thick}}, color={green}]
        coordinates {
            (15,0.12727965878000008) +- (0,0.07134657167861028)
            (16,0.0553610515) +- (0,0.0019507161499546471)
            (17,0.0686900511) +- (0,0.002835140366096567)
            (18,0.09679659909999999) +- (0,0.004268284958568384)
            (19,0.09714963092) +- (0,0.005297553146647083)
            (20,0.09929857581999998) +- (0,0.003613320594513726)
            (21,0.10843924891999997) +- (0,0.005403793384642203)
            (22,0.12470809388000005) +- (0,0.006916719306500576)
            (23,0.16229491173999996) +- (0,0.007762449354594711)
            (24,0.19002673321999997) +- (0,0.01401277865801378)
            (25,0.1927653972199999) +- (0,0.011089536637942113)
            (26,0.20842620337999995) +- (0,0.011593914331400938)
            (27,0.2356692732) +- (0,0.017710789544861466)
            (28,0.30697912758000007) +- (0,0.026746407420409686)
            (29,0.61390551954) +- (0,0.10707260935182862)
            (30,0.4404375830399999) +- (0,0.048320573851976446)
            (31,0.48820452647999985) +- (0,0.05718526122341773)
            (32,0.46636773368000006) +- (0,0.04561932566797899)
            (33,0.6339847168000001) +- (0,0.07247503487407765)
            (34,0.8110255599199998) +- (0,0.10371106811010172)
            (35,0.7093873266000001) +- (0,0.08334384203526746)
            (36,1.15405128904) +- (0,0.18947872193654544)
            (37,0.7984958435399999) +- (0,0.10850782736755561)
            (38,1.2375997899600002) +- (0,0.17381475675999636)
            (39,1.4164025650799998) +- (0,0.13935063347190468)
            (40,2.0374312049400007) +- (0,0.28189603310475597)
            (41,2.4675811944599992) +- (0,0.5410732289327781)
            (42,2.09128602658) +- (0,0.36738319944632203)
            (43,3.02446787586) +- (0,0.7826507055508676)
            (44,3.492495866760002) +- (0,0.9954900338080426)
            (45,2.9178023381199996) +- (0,0.4163057394854315)
            (46,2.9990188019199984) +- (0,0.41498193530134925)
            (47,5.074390348580001) +- (0,1.3679997258281225)
            (48,15.889517006060005) +- (0,8.109452175851628)
            (49,7.5766083435399985) +- (0,1.2147823242425384)
            (50,8.14560523338) +- (0,1.2931073097941619)
        }
        ;
    \addplot+[no marks, style={{ultra thick}}, color={red}]
        coordinates {
            (15,0.015857958999999998) +- (0,0.005097379935651559)
            (16,0.01174429616) +- (0,0.0005910746212205533)
            (17,0.012289269740000002) +- (0,0.0005144033993033188)
            (18,0.013775456019999998) +- (0,0.00046080974668560984)
            (19,0.01520640268) +- (0,0.0006585769327364016)
            (20,0.016513138860000002) +- (0,0.0005610567243312125)
            (21,0.01680146766) +- (0,0.00046430716800513114)
            (22,0.01719040132) +- (0,0.00011708041807117174)
            (23,0.020271368399999993) +- (0,0.0007114699425488275)
            (24,0.02144872772) +- (0,0.0007209025938961534)
            (25,0.022699434699999993) +- (0,0.0007614006890994286)
            (26,0.024157984719999998) +- (0,0.0007628764656014068)
            (27,0.0253387641) +- (0,0.0006358437594875307)
            (28,0.028113530960000008) +- (0,0.000934437546780695)
            (29,0.03057749326) +- (0,0.0011088012615826264)
            (30,0.03198375648) +- (0,0.0010365279665324717)
            (31,0.031794931039999995) +- (0,0.0008575048291010658)
            (32,0.034158377420000004) +- (0,0.000976952103319223)
            (33,0.033649822380000007) +- (0,0.00018102818302206228)
            (34,0.03802962172000001) +- (0,0.0011719269832835945)
            (35,0.039328412959999996) +- (0,0.0010303199384712395)
            (36,0.04266513221999999) +- (0,0.001297728905105227)
            (37,0.04820985304) +- (0,0.001188688048605529)
            (38,0.04908601595999999) +- (0,0.0010780348780942421)
            (39,0.049067201740000004) +- (0,0.0006375974561352209)
            (40,0.053940215439999994) +- (0,0.0013760611819820076)
            (41,0.05743363984) +- (0,0.0013513696049572833)
            (42,0.05598901424000001) +- (0,0.0008508052559633696)
            (43,0.06065563476) +- (0,0.0012243033749360934)
            (44,0.060181013959999995) +- (0,0.0013324195336481374)
            (45,0.06059737328) +- (0,0.0012899719023723507)
            (46,0.06187017621999999) +- (0,0.0011701594776310708)
            (47,0.06666781972000002) +- (0,0.0015834383421530415)
            (48,0.07698751983999999) +- (0,0.0020306550711500443)
            (49,0.07530371502000002) +- (0,0.0013077094038772131)
            (50,0.0813429103) +- (0,0.0017536283346892487)
        }
        ;
\end{axis}
\end{tikzpicture}
 }

 \subfloat[Support recovery (SNR 1)]{
  \label{fig:Sup-rec-snr-1}
 \begin{tikzpicture}[spy using outlines=
	{rectangle, magnification=2.5, connect spies}]
\begin{axis}[legend style= {at={(0.29,0.95)},anchor=north, font=\tiny}, scale only axis, height=2.5cm, width=0.2*\textwidth, xmajorgrids, ymajorgrids, font=\tiny, ticklabel style = {font=\tiny}, error bars/y dir=both, error bars/y explicit, error bars/error bar style={line width=.1pt}, xlabel={$m$ (size of the matrix)}, ylabel={Support recovery (\%)}, ymin={60}, ymax={105}, xmin={14}, xmax={52}]
        \addplot+[no marks, style={ultra thick}, color={green}, each nth point=3]
        coordinates {
            (15,87.46666666666664) +- (0,0.6771957651288066)
            (16,89.25) +- (0,0.7791937224739796)
            (17,88.58823529411757) +- (0,0.6154565856672455)
            (18,86.77777777777777) +- (0,0.612500996240468)
            (19,86.00000000000001) +- (0,0.594127707222208)
            (20,87.4) +- (0,0.6583839212458805)
            (21,87.23809523809523) +- (0,0.6299775185191963)
            (22,88.59090909090914) +- (0,0.5528640222398259)
            (23,85.65217391304351) +- (0,0.5989845193163329)
            (24,85.33333333333334) +- (0,0.535581994247714)
            (25,87.04) +- (0,0.47891032776587367)
            (26,87.38461538461544) +- (0,0.45362126242967676)
            (27,87.55555555555549) +- (0,0.47039387874905514)
            (28,85.7857142857143) +- (0,0.5634464364195322)
            (29,86.00000000000001) +- (0,0.534295442875123)
            (30,86.69999999999999) +- (0,0.4324952649394112)
            (31,86.9677419354839) +- (0,0.4511424239823195)
            (32,87.6875) +- (0,0.403369892788447)
            (33,85.6363636363637) +- (0,0.4135048982254391)
            (34,86.41176470588232) +- (0,0.5172676771936937)
            (35,86.74285714285713) +- (0,0.5147390292386759)
            (36,88.24999999999999) +- (0,0.506959004854111)
            (37,88.18918918918924) +- (0,0.41370435664060623)
            (38,86.39473684210526) +- (0,0.45364273936274896)
            (39,85.5384615384616) +- (0,0.5167821857450079)
            (40,86.3) +- (0,0.4909382936411309)
            (41,86.87804878048784) +- (0,0.4345773331754392)
            (42,87.4761904761905) +- (0,0.3714186039379243)
            (43,86.51162790697676) +- (0,0.4791430266397326)
            (44,85.79545454545456) +- (0,0.37618929306027504)
            (45,85.99999999999994) +- (0,0.4175538551735205)
            (46,87.39130434782608) +- (0,0.40133068271180417)
            (47,87.5744680851064) +- (0,0.43139683888656943)
            (48,86.2916666666667) +- (0,0.44180141700032155)
            (49,87.2244897959184) +- (0,0.41970357814409576)
            (50,86.82) +- (0,0.3542050599438068)
        }
        ;
     	\addplot[color=black, dashed, ultra thick, each nth point=1] coordinates {(15,100) (50,100)};
    \addplot+[no marks, style={{ultra thick}}, color={blue}, each nth point=3]
        coordinates {
            (15,87.06666666666665) +- (0,0.6975174637562113)
            (16,89.25) +- (0,0.7791937224739796)
            (17,88.7058823529411) +- (0,0.6022343820897115)
            (18,86.55555555555554) +- (0,0.6166007809382552)
            (19,86.31578947368422) +- (0,0.6200158835515279)
            (20,86.8) +- (0,0.6509020428511153)
            (21,86.95238095238095) +- (0,0.557823129251701)
            (22,88.86363636363642) +- (0,0.5040316459824684)
            (23,85.73913043478265) +- (0,0.5831897935058636)
            (24,84.91666666666669) +- (0,0.5309310266899466)
            (25,87.2) +- (0,0.48487322138506106)
            (26,87.1538461538462) +- (0,0.4478611840069193)
            (27,87.48148148148147) +- (0,0.5487361761889593)
            (28,85.14285714285715) +- (0,0.5714285714285718)
            (29,86.89655172413794) +- (0,0.5023664545411609)
            (30,86.16666666666666) +- (0,0.4158154570612616)
            (31,86.70967741935485) +- (0,0.45749948925311923)
            (32,87.875) +- (0,0.37499999999999994)
            (33,85.51515151515157) +- (0,0.4267089707481077)
            (34,86.11764705882348) +- (0,0.5044817149513825)
            (35,86.8) +- (0,0.44629998148038014)
            (36,88.69444444444441) +- (0,0.48277524012390544)
            (37,87.8648648648649) +- (0,0.3672827487401225)
            (38,85.97368421052633) +- (0,0.4598314536848836)
            (39,85.74358974358982) +- (0,0.5136570884383326)
            (40,86.0) +- (0,0.42857142857142855)
            (41,87.02439024390249) +- (0,0.35148488398379124)
            (42,87.14285714285712) +- (0,0.45634040357138617)
            (43,86.55813953488372) +- (0,0.4320980860568532)
            (44,86.11363636363637) +- (0,0.4019816195621958)
            (45,85.9555555555555) +- (0,0.42799490911371946)
            (46,87.04347826086955) +- (0,0.37632781266338194)
            (47,87.6595744680851) +- (0,0.4032370565781642)
            (48,86.04166666666669) +- (0,0.4091585441924855)
            (49,86.00000000000001) +- (0,0.345010196909318)
            (50,86.22) +- (0,0.3172458005382948)
        }
        ;
    \addplot+[no marks, style={{ultra thick}}, color={red}, each nth point=3]
        coordinates {
            (15,87.86666666666665) +- (0,0.5695206242943255)
            (16,88.625) +- (0,0.6105471784742413)
            (17,89.7058823529411) +- (0,0.4772191887227122)
            (18,86.88888888888886) +- (0,0.5888011743171877)
            (19,87.42105263157897) +- (0,0.6348376369266674)
            (20,87.75) +- (0,0.48048657310977533)
            (21,88.5238095238095) +- (0,0.534305999070992)
            (22,89.36363636363642) +- (0,0.5006320848674385)
            (23,86.78260869565221) +- (0,0.5077995884712969)
            (24,87.04166666666666) +- (0,0.46218235399533747)
            (25,88.52) +- (0,0.4230501490029622)
            (26,88.57692307692312) +- (0,0.4199254682260549)
            (27,88.8518518518518) +- (0,0.47468899084353183)
            (28,86.5357142857143) +- (0,0.4120036170045396)
            (29,86.75862068965517) +- (0,0.43349753694581294)
            (30,87.73333333333329) +- (0,0.4684126715183976)
            (31,88.83870967741933) +- (0,0.3823496500035309)
            (32,88.625) +- (0,0.476738895507589)
            (33,87.45454545454551) +- (0,0.3848677189765002)
            (34,87.99999999999996) +- (0,0.45128475906210264)
            (35,88.22857142857143) +- (0,0.3547969601975634)
            (36,88.47222222222221) +- (0,0.34193825277012085)
            (37,89.35135135135138) +- (0,0.375610394039083)
            (38,87.44736842105262) +- (0,0.3706583899460853)
            (39,87.20512820512828) +- (0,0.4135951272714504)
            (40,87.5) +- (0,0.3331206804674577)
            (41,88.31707317073173) +- (0,0.3537743755637166)
            (42,88.59523809523807) +- (0,0.3819381075961822)
            (43,87.74418604651163) +- (0,0.3732333026403177)
            (44,87.3181818181818) +- (0,0.3364112687529511)
            (45,88.24444444444441) +- (0,0.3238873233646719)
            (46,88.4782608695652) +- (0,0.36482422746803506)
            (47,88.7872340425532) +- (0,0.3428188773403735)
            (48,87.81250000000001) +- (0,0.3370461801460154)
            (49,88.06122448979593) +- (0,0.34894195928821653)
            (50,88.26) +- (0,0.3153359077141033)
        }
        ;
  \coordinate (spypoint) at (axis cs:40,87.5);
  \coordinate (magnifyglass) at (axis cs:30,70);
\end{axis}
\spy [black, width=1.6cm,height=0.95cm] on (spypoint)
   in node[fill=white] at (magnifyglass);
\end{tikzpicture}
 } \hfil
  \subfloat[Objective value (SNR 1)]{
  \label{fig:Ratio-of-objective-snr-1}
 \begin{tikzpicture}
\begin{axis}[legend style= {at={(0.29,0.95)},anchor=north, font=\tiny}, scale only axis, height=2.5cm, width=0.2*\textwidth, xmajorgrids, ymajorgrids, font=\tiny, ticklabel style = {font=\tiny}, error bars/y dir=both, error bars/y explicit, error bars/error bar style={line width=.1pt}, xlabel={$m$ (size of the matrix)}, ylabel={Normalized objective value}, ytick = {1, 1.2, 1.4, 1.6, 1.8, 2}]
    \addplot+[no marks, style={{ultra thick}}, color={blue}, each nth point=3]
        coordinates {
            (15,1.0120567203363195) +- (0,0.004614098680346607)
            (16,1.0060404581311755) +- (0,0.003978574300876993)
            (17,1.0117940776042562) +- (0,0.00659698984044237)
            (18,1.0130472983516434) +- (0,0.005959465043999211)
            (19,1.0297452370010527) +- (0,0.008685782519220637)
            (20,1.0425517498564283) +- (0,0.014669950208768103)
            (21,1.0359074659582) +- (0,0.010752864490906145)
            (22,1.0449853040840322) +- (0,0.008888616969252435)
            (23,1.0606014889794524) +- (0,0.011354581733682636)
            (24,1.071901420747057) +- (0,0.012668539658835737)
            (25,1.057184512695896) +- (0,0.010763550231318558)
            (26,1.0634659885029514) +- (0,0.012260269594940343)
            (27,1.076454808610606) +- (0,0.01373035731743001)
            (28,1.1143914577279572) +- (0,0.015800084001145825)
            (29,1.1021263222739206) +- (0,0.014193289657792488)
            (30,1.1099856290356866) +- (0,0.01572947269417152)
            (31,1.0754863678695479) +- (0,0.01295850779831273)
            (32,1.0830789388726492) +- (0,0.012387080827607483)
            (33,1.1470087266155167) +- (0,0.019056513443739764)
            (34,1.1528437208178248) +- (0,0.01619162273874443)
            (35,1.1221880535815807) +- (0,0.013216688379101815)
            (36,1.1361244299877549) +- (0,0.01426067376226187)
            (37,1.1035902346109325) +- (0,0.012209097320074249)
            (38,1.1639859571260547) +- (0,0.015563732329707756)
            (39,1.1684038247019952) +- (0,0.016375202229184527)
            (40,1.169090413531579) +- (0,0.01830396373791563)
            (41,1.1637328295372118) +- (0,0.01425623282003199)
            (42,1.148511933681853) +- (0,0.014516103328920012)
            (43,1.2023404506747917) +- (0,0.016315804462797078)
            (44,1.181234185587145) +- (0,0.013365289454987477)
            (45,1.1908632052692218) +- (0,0.0168873733242229)
            (46,1.1870685791102038) +- (0,0.011426123153794848)
            (47,1.1758331066847028) +- (0,0.014782892785967934)
            (48,1.1973127586872943) +- (0,0.01604420349640028)
            (49,1.221570778848276) +- (0,0.014534198894543637)
            (50,1.2003536830301693) +- (0,0.014287579518244522)
        }
        ;
    \addplot+[no marks, style={{ultra thick}}, color={red}, each nth point=3]
        coordinates {
            (15,1.452267314986437) +- (0,0.058072471044421245)
            (16,1.3990069105726537) +- (0,0.059509608192368016)
            (17,1.32272662008373) +- (0,0.05067589617319276)
            (18,1.5015949072521417) +- (0,0.05583624184731151)
            (19,1.495562052280124) +- (0,0.051649135751155366)
            (20,1.4670401524919314) +- (0,0.07165675142478316)
            (21,1.432795325506145) +- (0,0.047017896540359996)
            (22,1.4220990437195389) +- (0,0.041125647634826894)
            (23,1.4229391948281) +- (0,0.04627206619625586)
            (24,1.5472846921396493) +- (0,0.05303633338542655)
            (25,1.4356690953692544) +- (0,0.03519964208574052)
            (26,1.4079307509447778) +- (0,0.03876803676845139)
            (27,1.4192526630487357) +- (0,0.043536865874654856)
            (28,1.5847683701040287) +- (0,0.05756013020501772)
            (29,1.576363417979891) +- (0,0.03960825842176314)
            (30,1.5311202168249245) +- (0,0.048447654189798725)
            (31,1.4418999359403273) +- (0,0.03755371085001168)
            (32,1.4685083005648958) +- (0,0.039445685495481865)
            (33,1.5375768101281861) +- (0,0.04183640634606852)
            (34,1.5109204458212901) +- (0,0.049781704514124635)
            (35,1.443478851812607) +- (0,0.028805163121457672)
            (36,1.509727568224008) +- (0,0.034207572221639435)
            (37,1.4578707193092304) +- (0,0.03370135742176717)
            (38,1.5834586616615383) +- (0,0.043325266153589774)
            (39,1.5889727990938105) +- (0,0.04512838467951343)
            (40,1.5784178650910712) +- (0,0.03886166348021826)
            (41,1.5253947132254424) +- (0,0.03821321540027149)
            (42,1.4554604573469947) +- (0,0.038965760580894265)
            (43,1.6003834322064532) +- (0,0.04148163293164837)
            (44,1.5332244710420522) +- (0,0.037491508913090964)
            (45,1.507963070368759) +- (0,0.036576795422376546)
            (46,1.4273750873335878) +- (0,0.029506827423703186)
            (47,1.4678007665662838) +- (0,0.026237968265226112)
            (48,1.5724085005239592) +- (0,0.039916445490382466)
            (49,1.5445106128629142) +- (0,0.033111889689449184)
            (50,1.4908295643570577) +- (0,0.02566598197000549)
        }
        ;
    	\addplot[color=green, dashed, ultra thick, each nth point=1] coordinates {(15,1) (50,1)};
\end{axis}
\end{tikzpicture}} \hfil
 \subfloat[Solution time (s) (SNR 1)]{
  \label{fig:Sol-time-snr-1}
 \begin{tikzpicture}
\begin{axis}[legend style= {at={(0.29,0.95)},anchor=north, font=\tiny}, scale only axis, height=2.5cm, width=0.2*\textwidth, xmajorgrids, ymajorgrids, font=\tiny, ticklabel style = {font=\tiny}, error bars/y dir=both, error bars/y explicit, error bars/error bar style={line width=.1pt}, xlabel={$m$ (size of the matrix)}, ylabel={Solution time (s) in log scale}, ymode={log}, ytick = {0.001, 0.01, 0.1, 1, 10}, each nth point=3]
    \addplot+[no marks, style={{ultra thick}}, color={blue}]
        coordinates {
            (15,0.3053642160599999) +- (0,0.1316260523501432)
            (16,0.18937755420000002) +- (0,0.0025393491067535155)
            (17,0.20158431304000005) +- (0,0.0022111674730625913)
            (18,0.24436798363999995) +- (0,0.0028322720650530956)
            (19,0.24342601918000004) +- (0,0.0030708854109489214)
            (20,0.2500540526000001) +- (0,0.003042880954015661)
            (21,0.25030316318000007) +- (0,0.0029616382181274535)
            (22,0.2650490880200001) +- (0,0.002624346951184519)
            (23,0.299346197) +- (0,0.0037050038727899576)
            (24,0.30812180552) +- (0,0.0038241805596974077)
            (25,0.30664735640000007) +- (0,0.0037516885894489154)
            (26,0.3143835688600001) +- (0,0.0032085065247055306)
            (27,0.33024969694) +- (0,0.003978698583726053)
            (28,0.3511856123000001) +- (0,0.0034759861662873232)
            (29,0.37519060138) +- (0,0.004847221071636649)
            (30,0.3584801537400001) +- (0,0.0033143134697867695)
            (31,0.40051097550000003) +- (0,0.005148722540892863)
            (32,0.41223116072000004) +- (0,0.0049447912553197025)
            (33,0.4730611328799999) +- (0,0.0045955849647794245)
            (34,0.4460868855400001) +- (0,0.005199046947577577)
            (35,0.44477649254) +- (0,0.0059373107709058985)
            (36,0.43859766154000007) +- (0,0.004331353376400012)
            (37,0.44776359084) +- (0,0.004892854820680536)
            (38,0.47039705602000004) +- (0,0.00458934913762619)
            (39,0.5214991535600002) +- (0,0.005591550951442787)
            (40,0.4831747337600001) +- (0,0.004491258126246854)
            (41,0.48825633163999993) +- (0,0.004135850917421541)
            (42,0.4984895562800001) +- (0,0.005326155238909076)
            (43,0.46760082436) +- (0,0.0035232129829275522)
            (44,0.48110233983999995) +- (0,0.004909057690802182)
            (45,0.4779055301400001) +- (0,0.004709974533358516)
            (46,0.4839687459199999) +- (0,0.00480554901311321)
            (47,0.48270108191999994) +- (0,0.003994383189832419)
            (48,0.6992690688800001) +- (0,0.008090084351664781)
            (49,0.7132934633200001) +- (0,0.008700669910228817)
            (50,0.71823742166) +- (0,0.009347590338046377)
        }
        ;
    \addplot+[no marks, style={{ultra thick}}, color={green}]
        coordinates {
            (15,0.2418809038399999) +- (0,0.11601421611493536)
            (16,0.10898331324000005) +- (0,0.007196383819645266)
            (17,0.14210868200000004) +- (0,0.0073869640344937405)
            (18,0.16071956642000004) +- (0,0.01151990020691928)
            (19,0.14745174653999996) +- (0,0.012525880483000927)
            (20,0.18613791547999997) +- (0,0.015249849036761218)
            (21,0.161523742) +- (0,0.009159439716637638)
            (22,0.21096982571999995) +- (0,0.01778579788231763)
            (23,0.25338608698) +- (0,0.028581970000496077)
            (24,0.3641255469400001) +- (0,0.0765530311833455)
            (25,0.34131720019999995) +- (0,0.06431380495147898)
            (26,0.35707502120000006) +- (0,0.03367472443333466)
            (27,0.31701067254000004) +- (0,0.03677685277719205)
            (28,0.4352281269800001) +- (0,0.08109577797896396)
            (29,0.7232293605600002) +- (0,0.16259124009511572)
            (30,0.49005696237999996) +- (0,0.06760707428350449)
            (31,0.6721294737200003) +- (0,0.09287238449344841)
            (32,0.68021620134) +- (0,0.05921359010105124)
            (33,0.9573926339999999) +- (0,0.12777224910170704)
            (34,1.5028646065599998) +- (0,0.25574479933247285)
            (35,1.4507990270199997) +- (0,0.23679295048696847)
            (36,1.5243376364400003) +- (0,0.22075431544826246)
            (37,1.2992101596000003) +- (0,0.21485239260146718)
            (38,2.2126197214000007) +- (0,0.28367741350957254)
            (39,2.23634261078) +- (0,0.30061961059197556)
            (40,3.2507556194200014) +- (0,0.7728653712936803)
            (41,3.643085636159999) +- (0,0.7432953745290475)
            (42,10.4435998401) +- (0,5.67300316928722)
            (43,5.9871548929) +- (0,1.7099600687141308)
            (44,4.941157824460001) +- (0,0.6671711260370462)
            (45,7.388246834739996) +- (0,1.222973702010374)
            (46,7.565676840679998) +- (0,1.639987959572876)
            (47,10.928940426299999) +- (0,2.793609055896612)
            (48,17.644363577899995) +- (0,4.790567550767755)
            (49,14.005640854400001) +- (0,3.0885145900762225)
            (50,23.74782526622) +- (0,6.908751424380061)
        }
        ;
    \addplot+[no marks, style={{ultra thick}}, color={red}]
        coordinates {
            (15,0.014485045839999999) +- (0,0.0008841408203789709)
            (16,0.01836047924) +- (0,0.001170589075257544)
            (17,0.016520655959999995) +- (0,0.0008120476227031921)
            (18,0.020760376320000003) +- (0,0.0012605056646113454)
            (19,0.02183949826) +- (0,0.0012284209856565152)
            (20,0.022834329599999998) +- (0,0.0011170164906688259)
            (21,0.026947384880000008) +- (0,0.0014572652290339178)
            (22,0.023090705960000003) +- (0,0.00016846797044401792)
            (23,0.03252384592000001) +- (0,0.0018421307747785972)
            (24,0.03063711148) +- (0,0.00161431027810366)
            (25,0.03429218752) +- (0,0.0017575716799687802)
            (26,0.03657170310000001) +- (0,0.0022874437943614366)
            (27,0.04060387276) +- (0,0.002326952484952428)
            (28,0.056555486879999985) +- (0,0.0026178439241156554)
            (29,0.05700598987999997) +- (0,0.002935557639552355)
            (30,0.06596910151999999) +- (0,0.003492712597852397)
            (31,0.05073403376000002) +- (0,0.0028889286145509716)
            (32,0.04922206451999999) +- (0,0.002339683462656945)
            (33,0.051768121400000026) +- (0,0.002201132425559565)
            (34,0.059318188560000006) +- (0,0.003730329608731944)
            (35,0.061278746239999994) +- (0,0.0029919258118447415)
            (36,0.07454707609999998) +- (0,0.0048563016641312035)
            (37,0.07338918427999999) +- (0,0.002984021606325012)
            (38,0.06391948935999998) +- (0,0.0015682170171080563)
            (39,0.06717056232000002) +- (0,0.0021877845519153736)
            (40,0.09372226852000003) +- (0,0.005295033380227362)
            (41,0.09143809654) +- (0,0.004228263445002035)
            (42,0.10196245435999998) +- (0,0.005273298450064095)
            (43,0.08635558104000003) +- (0,0.0035861888109208623)
            (44,0.10149342102000004) +- (0,0.00546288197394361)
            (45,0.08766947128) +- (0,0.0036326336911238367)
            (46,0.10413924547999998) +- (0,0.005128695325619211)
            (47,0.09755995635999998) +- (0,0.004545731225163858)
            (48,0.10439048186) +- (0,0.004740175898077344)
            (49,0.11055871628000002) +- (0,0.005022853752314182)
            (50,0.10897732310000002) +- (0,0.004192188325208908)
        }
        ;
\end{axis}
\end{tikzpicture}
 }

\caption{Sparse regression problem: comparison between \textsf{NExOS} (shown in \textcolor{blue}{blue}), \textsf{glmnet} (shown in \textcolor{red}{red}), and \textsf{Gurobi} (shown in \textcolor{green}{green}). The first and second rows correspond to SNR 6 and SNR 1, respectively. For each SNR, the first column  compares support recovery, the second column shows how close the objective value of the solution found by each algorithm gets to the optimal objective value (normalized as 1), and the third column shows the solution time (s) of each algorithm. \label{fig:Figure-sparse-reg}}
\end{figure}

\paragraph{Results} Figure \ref{fig:Figure-sparse-reg} compares \textsf{NExOS} (shown in \textcolor{blue}{blue}), \textsf{glmnet} (shown in \textcolor{red}{red}) and \textsf{Gurobi} (shown in \textcolor{green}{green}) for solving (\ref{eq:reg-sel}). The results displayed in the figures are averaged over 50 simulations
for each value of $m$, and also show one-standard-error bands that
represent one standard deviation confidence interval around the mean.

Figures \ref{fig:Figure-sparse-reg}(a) and \ref{fig:Figure-sparse-reg}(d) show the support recovery
(\%) of the solutions found by \textsf{NExOS}, \textsf{glmnet}, and \textsf{Gurobi} for SNR 6 and SNR 1, respectively. Given a solution $x$ and true signal $x^{\textrm{True}},$
the support recovery is defined as $\sum_{i=1}^{d}1_{\{\textrm{sign}(x_{i})=\textrm{sign}(x_{i}^{\textrm{True}})\}}/d,$
where $1_{\{\cdot\}}$ evaluates to $1$ if $(\cdot)$ is true and
$0$ else\emph{, }and $\textrm{sign}(t)$ is $1$ for $t>0$, $-1$
for $t<0$, and $0$ for $t=0$\emph{.} So, higher the support recovery,
better is the quality of the found solution. For both SNRs, \textsf{NExOS} and \textsf{Gurobi} have almost identical support recovery. For the high SNR, \textsf{NExOS} 
recovers most of the original signal's support and is better than \textsf{glmnet} consistently. On average, \textsf{NExOS}
recovers 4\% more of the support than \textsf{glmnet}. However, this behaviour changes for the low SNR, where \textsf{glmnet} recovers  1.26\% more of the support than \textsf{NExOS}. This differing behavior in low and high SNR is consistent with the observations made in \cite{hastie2017extended}. %

Figures \ref{fig:Figure-sparse-reg}(b) and \ref{fig:Figure-sparse-reg}(e) compare the quality of the
solution found by the algorithms in terms of the normalized objective value (the objective value of the found solution divided by the otimal objective value) for SNR 6 and SNR 1, respectively. As \textsf{Gurobi}'s spatial branch-and-bound algorithm finds certifiably globally optimal solution to (\ref{eq:reg-sel}), its normalized objective value is always 1, though the runtime is orders of magnitude slower than \textsf{glmnet} and \textsf{NExOS} (see the next paragraph). The closer the normalized objective value is to 1, better is the quality of the solution in terms of minimizing the objective value. We see
that for the high SNR, on average \textsf{NExOS} is able to find a solution that is very close to the globally optimal solution, whereas the solution found by \textsf{glmnet} has worse objective value on average. For the low SNR, on average the normalized objective values of the solutions found by both \textsf{NExOS} and \textsf{glmnet} get worse, though \textsf{NExOS} does better than \textsf{glmnet} in this case as well.

Finally, in Figures \ref{fig:Figure-sparse-reg}(c) and \ref{fig:Figure-sparse-reg}(f), we compare the solution
times (in seconds and on log scale) of the algorithms for SNR 6 and SNR 1, respectively. 
We
see that\texttt{ }\textsf{glmnet} is slightly faster than \textsf{NExOS}. This slower performance is due to
the fact that \textsf{NExOS} is a general purpose method, %
whereas \textsf{glmnet} is specifically
optimized for the convexified sparse regression problem with a 
specific cost function. %
For smaller problems, \textsf{Gurobi} is somewhat faster than \textsf{NExOS}, however once we go beyond $m\geq27$, the solution time by \textsf{Gurobi} starts to increase drastically. Beyond $m\geq50$, comparing the solution times is not meaningful as \textsf{Gurobi} cannot find a solution in 2 minutes, whereas \textsf{NExOS} takes less than 30 seconds.

\subsubsection{Experiments and results for real-world dataset}

\paragraph{Description of the dataset}

We now investigate the performance of our algorithm on a real-world, publicly available dataset called the \texttt{weather prediction dataset}, where
we consider the problem of predicting the temperature half a day in advance in 30 US and Canadian Cities along with 6
Israeli cities. The dataset contains hourly measurements of weather attributes \emph{e.g.,} temperature, humidity, air pressure, wind speed, and so on. The dataset has $m = 45,231$ instances along with $d = 1,800$ attributes. The dataset is preprocessed in the same manner as described in \cite[\S 8.3]{bertsimas2021slowly}. Our goal is to predict the temperature half a day in advance as a linear %
 function of the attributes, where at most $k$ attributes can be nonzero.
We include a bias term in our model, \emph{i.e.}, in (\ref{eq:reg-sel})
we set $A=[\bar{A}\mid\ones]$. We randomly split 80\% of the data into the training 
set and 20\% of the data into the test set.

\paragraph{Results}

Figure \ref{fig:RMS-error-vs-k-m-weather} shows the RMS error
for the training datasets and the test datasets for both \textsf{NExOS} and \textsf{glmnet}. The results for training and test datasets are reasonably similar for each value of $k$. This
gives us confidence that the sparse regression model will have similar
performance on new and unseen data. This also suggests that our model
does not suffer from over-fitting. We also see that, for $k\geq20$ and $k \geq 5$,
none of the errors for \textsf{NExOS} and \textsf{glmnet} drop significantly, respectively. For smaller $k \leq 10$, \textsf{glmnet}
does better than \textsf{NExOS}, but beyond $k \geq 10$, \textsf{NExOS} performs better than \textsf{glmnet}.

\begin{figure}
\centering
\begin{tikzpicture}
\begin{axis}[legend style= {font=\tiny}, scale only axis, height=2.5cm, width=0.7*\textwidth, font=\tiny, ticklabel style = {font=\tiny}, xmajorgrids, ymajorgrids,  xlabel={$k$ (cardinality)}, ylabel={RMS error}]
    \addplot+[no marks, style={{ultra thick}}, color={blue}]
        coordinates {
            (3,19.918652413042143)
            (4,19.163053488878123)
            (5,18.43083126792745)
            (6,17.721443854042068)
            (7,17.00460578021895)
            (8,16.31150448055227)
            (9,15.637960428202609)
            (10,15.000819629300565)
            (11,14.4050731733844)
            (12,13.896453093769338)
            (13,13.443095024548697)
            (14,13.050767333877767)
            (15,12.725088874452467)
            (16,12.471285753401729)
            (17,12.29084159291348)
            (18,12.18474418607375)
            (19,12.148232282923907)
            (20,12.147032269874158)
            (21,12.146853163621339)
            (22,12.14675678713992)
            (23,12.146719893189172)
            (24,12.146816390869667)
            (25,12.14672195901336)
            (26,12.146642700493913)
            (27,12.14669652157496)
            (28,12.146652323954092)
            (29,12.14667162356629)
            (30,12.146645452943003)
        }
        ;
    \addlegendentry {\textsf{NExOS} training error}
    \addplot+[no marks, style={{ultra thick}}, color={teal}]
        coordinates {
            (3,23.99714953823415)
            (4,22.977910488662648)
            (5,21.978109273494244)
            (6,21.070681330418086)
            (7,20.22348969536787)
            (8,19.3190658652557)
            (9,18.36951558635656)
            (10,17.451972662409656)
            (11,16.572094958891718)
            (12,15.986096588808303)
            (13,15.446914768400816)
            (14,14.95985939061366)
            (15,14.529377456260884)
            (16,14.160611002514251)
            (17,13.734338076991804)
            (18,13.40428421314229)
            (19,13.165340383757266)
            (20,13.154876952812351)
            (21,13.153051286395742)
            (22,13.15231752267871)
            (23,13.151806413480367)
            (24,13.15096792094439)
            (25,13.152247355896465)
            (26,13.151977634577129)
            (27,13.151090757888335)
            (28,13.150775651106365)
            (29,13.150074161710828)
            (30,13.150329017926751)
        }
        ;
    \addlegendentry {\textsf{NExOS} test error}
    \addplot+[no marks, style={{ultra thick, dotted}}, color={brown}]
        coordinates {
            (3,19.948793854794708)
            (4,16.716963435519315)
            (5,15.782596723845502)
            (6,15.782596723845502)
            (7,15.76436550138061)
            (8,15.74495690631091)
            (9,15.735320877871048)
            (10,15.734742368969682)
            (11,15.73360694743513)
            (12,15.733313569745702)
            (13,15.730658536614369)
            (14,15.729421323613115)
            (15,15.728982230914587)
            (16,15.728700947444855)
            (17,15.727622744908635)
            (18,15.726949612265537)
            (19,15.726517281031944)
            (20,15.725631385291605)
            (21,15.725523160534893)
            (22,15.725430490785685)
            (23,15.725166592581415)
            (24,15.725079053677291)
            (25,15.724633497998994)
            (26,15.724633497998994)
            (27,15.724416144875063)
            (28,15.724147889908124)
            (29,15.724020309613467)
            (30,15.723975078759137)
        }
        ;
    \addlegendentry {\textsf{glmnet} training error}
    \addplot+[no marks, style={{ultra thick, dashed}}, color={red}]
        coordinates {
            (3,24.03567059253795)
            (4,19.57298812346571)
            (5,18.20919093739861)
            (6,18.20919093739861)
            (7,18.181607883796968)
            (8,18.151733714062754)
            (9,18.136812165479814)
            (10,18.135352122815693)
            (11,18.13227808759265)
            (12,18.13143912926094)
            (13,18.12354805907545)
            (14,18.120041250393133)
            (15,18.118878020760015)
            (16,18.11795398255339)
            (17,18.11476159372627)
            (18,18.112533650445556)
            (19,18.111402630964164)
            (20,18.108114912797586)
            (21,18.107724335297075)
            (22,18.1073754185149)
            (23,18.10696276857742)
            (24,18.106844420104846)
            (25,18.10617511316489)
            (26,18.10617511316489)
            (27,18.10569791344396)
            (28,18.10496120267165)
            (29,18.10469040749312)
            (30,18.104648863596534)
        }
        ;
    \addlegendentry {\textsf{glmnet} test error}
\end{axis}
\end{tikzpicture}

\caption{RMS error vs $k$ (cardinality) for the weather prediction problem.\label{fig:RMS-error-vs-k-m-weather}}
\end{figure}

\subsection{Affine rank minimization problem\label{subsec:Affine-rank-minimization}}

\paragraph{Problem description}

In (\ref{eq:reg-sel}), we set $\mathcal{X}:=\{X\in\mathbf{R}^{m\times d}\mid\mathop{{\bf rank}}(X)\leq r,\|X\|_{2}\leq\Gamma\},$
and $f(X):=\left\Vert \mathcal{A}(X)-b\right\Vert _{2}^{2}$. To compute
the proximal operator of $f$, we use the formula in \cite[\S 6.1.1]{boydProx}.
Finally, we use the formula in \cite[page 14]{Diamond2018} for projecting
onto $\mathcal{X}$. Now we are in a position to apply the \textsf{NExOS}
to this problem. 

\paragraph{Summary of the experiments performed}

\emph{First,} we apply \textsf{NExOS} to solve (\ref{eq:lrm-estimation})
for synthetic datasets, where we observe how the algorithm performs
in recovering a low-rank matrix given noisy measurements and also compare \textsf{NExOS} with \textsf{NCVX}---an \textsf{ADMM}-based algorithm \cite{Diamond2018}. \emph{Second,
}we apply \textsf{NExOS} to a real-world dataset (\texttt{MovieLens
1M Dataset}) to see how our algorithm performs in solving a matrix-completion
problem). 

\subsubsection{Experiments and results for synthetic dataset}

\paragraph{Data generation process and setup}

We generate the data as follows similar to \cite{Diamond2018}.
We vary $m$ (number of rows of the decision variable $X$) from 50
to 75 with a linear spacing of 5, where we take $d=2m$, and rank
to be equal to $m/10$ rounded to the nearest integer. For each value
of $m$, we create 25 random instances as follows. The operator $\mathcal{A}$
is drawn from an iid normal distribution with $\mathcal{N}(0,1)$
entries. Similarly, we create the low rank matrix $X_{\textrm{True}}$
with rank $r$, first drawn from an iid normal distribution with $\mathcal{N}(0,1)$
entries, and then truncating the singular values that exceed $\Gamma$ to $0$. Signal-to-noise
ratio is taken to be around 20 by following the same method described
for the sparse regression problem.

\begin{figure}[htp]
\centering
\subfloat[Fixed point gap representing convergence of the iterates vs $m$]{%
\label{fig:Normalized-fixed-point-gap}%
\begin{tikzpicture}
\begin{axis}[legend style= {at={(0.35,0.85)},anchor=north, font =\tiny}, scale only axis, height=3cm, width=0.25*\textwidth, font=\tiny, ticklabel style = {font=\tiny}, xmajorgrids, ymajorgrids, error bars/y dir=both, error bars/y explicit, xlabel={$m$ (matrix size)}, ylabel={Fixed point gap},
ymin = 1e-5, ymax = 4e-1,
]
    \addplot+[no marks, style={{ultra thick}}, color={blue}]
        coordinates {
            (50,8.877821599999999e-5) +- (0,1.6929656284429802e-6)
            (55,8.9241452e-5) +- (0,1.4233310572517324e-6)
            (60,9.203202399999999e-5) +- (0,1.0954852870041967e-6)
            (65,8.914712e-5) +- (0,1.4050806336529822e-6)
            (70,9.198874799999998e-5) +- (0,1.3295495545248897e-6)
            (75,8.880418000000001e-5) +- (0,1.4081166837067636e-6)
        }
        ;
    \addlegendentry {NExOS}
    \addplot+[no marks, style={{ultra thick}}, color={red}]
        coordinates {
            (50,0.17579747281990327) +- (0,0.009129490774399406)
            (55,0.17723715494945833) +- (0,0.00903170929784641)
            (60,0.1790639971698293) +- (0,0.006140914072725028)
            (65,0.18143017983208082) +- (0,0.006666773680454051)
            (70,0.17096295831155847) +- (0,0.007937624139201205)
            (75,0.17696715055823922) +- (0,0.00845014836201521)
        }
        ;
    \addlegendentry {NCVX}
\end{axis}
\end{tikzpicture}
}
\qquad
\subfloat[Maximum absolute error in recovering the original matrix vs $m$]{%
\label{fig:Maximum-absolute-error-in-recovery-ARM}%
\begin{tikzpicture}
\begin{axis}[legend style= {at={(0.35,0.95)},anchor=north, font =\tiny}, scale only axis, height=3cm, width=0.25*\textwidth, font=\tiny, ticklabel style = {font=\tiny}, xmajorgrids, ymajorgrids, error bars/y dir=both, error bars/y explicit, xlabel={$m$ (matrix size)}, ylabel={$\| X_{\textrm{True}}-X^{\star}_\textrm{sol}\|_\textrm{max}$},
ymin = 0.0001, ymax = 1
]
    \addplot+[no marks, style={{ultra thick}}, color={blue}]
        coordinates {
            (50,0.004502298800000001) +- (0,0.00012324852008079718)
            (55,0.0049530036) +- (0,0.00010436583514394609)
            (60,0.0042427424) +- (0,9.234358672782859e-5)
            (65,0.003769103199999999) +- (0,8.948930266931348e-5)
            (70,0.003924134800000001) +- (0,6.932350138089294e-5)
            (75,0.004148300400000002) +- (0,6.268112585268392e-5)
        }
        ;
    \addlegendentry {NExOS}
        \addplot+[no marks, style={{ultra thick}}, color={red}]
        coordinates {
            (50,0.5536576658315994) +- (0,0.02108077463640419)
            (55,0.6283928625168838) +- (0,0.024114581321585893)
            (60,0.6169454932679834) +- (0,0.01460175849637596)
            (65,0.6130568172170452) +- (0,0.011957049380103568)
            (70,0.6495109670079592) +- (0,0.023393757975101424)
            (75,0.690443608143288) +- (0,0.029727205023025522)
        }
        ;
    \addlegendentry {NCVX}
\end{axis}
\end{tikzpicture}
}
\qquad
\subfloat[Ratio of training losses of the true matrix $X_{\textrm{True}}$ and the solution found by \textsf{NExOS} vs $m$ ]{%
\label{fig:Ratio-of-objective-ARM}%
\begin{tikzpicture}
\begin{axis}[legend style= {at={(0.35,0.6)},anchor=north}, scale only axis, height=3cm, width=0.25*\textwidth, xmajorgrids, ymajorgrids,font=\tiny, ticklabel style = {font=\tiny}, error bars/y dir=both, error bars/y explicit, xlabel={$m$ (matrix size)}, ylabel={$p^{\star}_{\textrm{True}}/p^{\star}_{\textrm{sol}}$}, ymin = 0.01, ymax = 2, xmin = 45, xmax = 80]
        \addplot+[no marks, style={{ultra thick}}, color={blue}]
        coordinates {
            (50,1.3954668000000001) +- (0,0.004953254207084468)
            (55,1.4546639999999997) +- (0,0.004323372950216841)
            (60,1.3978512) +- (0,0.004355852894669422)
            (65,1.3592255999999998) +- (0,0.004139639923149514)
            (70,1.3973892) +- (0,0.0035280365606571266)
            (75,1.4459139999999997) +- (0,0.0028772139301762043)
        }
        ;
    \addlegendentry {NExOS}
    \addplot+[no marks, style={{ultra thick}}, color={red}]
        coordinates {
            (50,0.05425952183725204) +- (0,0.004578565649628778)
            (55,0.05306117068020596) +- (0,0.006744681064457461)
            (60,0.04865386964658613) +- (0,0.002826853024806257)
            (65,0.04791113777654875) +- (0,0.0021309055926627177)
            (70,0.07207106491536845) +- (0,0.024802948940188318)
            (75,0.0650111037805949) +- (0,0.012809121617246396)
        }
        ;
        \addlegendentry {NCVX}
	\addplot[color=teal, dashed, thick] coordinates {(45,1) (80,1)};
\end{axis}
\end{tikzpicture}
}
\caption{Affine rank minimization problem: comparison between solutions found by \textsf{NExOS} and  \textsf{NCVX} algorithm by \cite{Diamond2018}. } \label{fig_aff_rank_min}
\end{figure}

\paragraph{Results}
{
The results displayed in Figure \ref{fig_aff_rank_min} average over 50 simulations
for each value of $m$ and also show one standard error band. We compare \textsf{NExOS}, with \textsf{NCVX}---an \textsf{ADMM}-based algorithm \cite{Diamond2018}. 

Fig~\ref{fig:Normalized-fixed-point-gap} plots the normalized fixed point gap of the iterates for both algorithms computed by $\|X_{\textsf{Alg}}^{\star} - Y_{\textsf{Alg}}^{\star} \|/\|X_\textrm{True}\|$with $\textsf{Alg} \in \{\textsf{NExOS}, \textsf{NCVX}\}$ and $X_{\textsf{Alg}}^{\star}, Y_{\textsf{Alg}}^{\star}$ representing the final iterates produced by the algorithms. This plot shows that \textsf{NCVX} iterates have a fixed point gap larger than $0.17$, \emph{i.e.}, the iterates do not converge within a reasonable fixed point gap. On the other hand, \textsf{NExOS} iterates converge with a normalized fixed-point gap reaching the desired tolerance of less than or equal to $10^{-4}$ for each instance. 

Figure \ref{fig:Maximum-absolute-error-in-recovery-ARM} shows how
well \textsf{NExOS} and \textsf{NCVX} recovers the original matrix
$X_{\textrm{True}}$. 
To quantify the recovery, we compute the max
norm of the difference matrix $\|X_{\textrm{True}}-X_{\textsf{Alg}}^{\star}\|_{\textrm{max}}=\max_{i,j}|X_{\textrm{True}}(i,j)-X_{\textsf{Alg}}^{\star}(i,j)|,$
where the solution found by \textsf{Alg} is denoted by $X_{\textsf{Alg}}^{\star}$.
We see that the worst-case component-wise error is very small (smaller than 0.005 for each instance) in all
the cases for \textsf{NExOS}, but for \textsf{NCVX}, it is larger than $0.5$ for each instance. In other words, the solution found by \textsf{NExOS} is much closer to the ground truth as compared to \textsf{NCVX}.   

Finally, we show how the training loss of the solutions
computed by \textsf{NExOS} and \textsf{NCVX} compare with the original matrix $X_{\textrm{True}}$
in Figure \ref{fig:Ratio-of-objective-ARM}.  Note that for \textsf{NExOS}, the ratio $p^\star_\textrm{True}/p^\star_{\textrm{sol}}$ is
larger than one in most cases, \emph{i.e.}, \textsf{NExOS}  find a
solutions with smaller cost compared to $X_{\textrm{True}}$.
This is due to the fact that under the signal-to-noise ratio that we consider, the problem data can be explained better by another matrix with
a lower training loss. That being said, $X_{\textsf{NExOS}}^{\star}$
is not too far from $X_{\textrm{True}}$ component-wise as we saw
in Figure \ref{fig:Maximum-absolute-error-in-recovery-ARM}. On the other hand, for \textsf{NCVX} algorithm, the ratio $p^\star_\textrm{True}/p^\star_{\textrm{sol}}$ is smaller than $0.05$ for each instance, \emph{i.e.}, the objective value of the solutions is $20$ times worse than that of the original signal.
}

\subsubsection{Experiments and results for real-world dataset: matrix completion
problem}

\paragraph{Description of the dataset}

To investigate the performance of our problem on a real-world dataset,
we consider the publicly available \texttt{MovieLens 1M Dataset}. This
dataset contains 1,000,023 ratings for 3,706 unique movies;
these recommendations were made by 6,040 MovieLens users. The rating
is on a scale of 1 to 5. If we construct a matrix of movie ratings
by the users (also called the preference matrix), denoted by $Z$,
then it is a matrix of 6,040 rows (each row corresponds to a user)
and 3,706 columns (each column corresponds to a movie) with only 4.47\%
of the total entries are observed, while the rest being
missing. 
Our goal is to complete this matrix, under the assumption that the
matrix is low-rank. For more details about the model, see \cite[\S 8.1]{jain2017non}.

To gain confidence in the generalization ability of this model, we
use an out-of-sample validation process. By random selection, we split
the available data into a training set (80\% of the total data) and
a test set (20\% of the total data). We use the training set as the
input data for solving the underlying optimization process, and the
held-out test set is used to compute the test error for each value
of $r$. The best rank $r$ corresponds to the point beyond which the
improvement is rather minor. We tested rank values
$r$ ranging in $\{1,3,5,7,10,20,25,30,35\}$. We compute the RMS error as follows. Let $\Omega_{\textrm{test}}$ be the index set corresponding to the
test data. If $X_\textsf{NExOS}^\star$ is the matrix returned by \textsf{NExOS},
then the corresponding RMS error is computed by using the formula 
\[
\textrm{RMS}=\sqrt{\frac{\sum_{(i,j)\in\Omega_{\textrm{test}}}\left(\left(X_\textsf{NExOS}^\star\right)_{ij}-Z_{ij}\right)^{2}}{|\Omega_{\textrm{test}}|}}, 
\]
where $|\Omega_{\textrm{test}}|$ is the number of elements in $\Omega_{\textrm{test}}$.

\paragraph{Matrix completion problem}

The matrix
completion problem is:
\begin{equation}
\begin{array}{ll}
\textup{minimize} & \sum_{(i,j)\in\Omega}(X_{ij}-Z_{ij})^{2}+(\beta/2)\|X\|_{F}^{2}\\
\textup{subject to} & \mathop{{\bf rank}}(X)\leq r, \quad \|X\|_{2}\leq\Gamma,
\end{array}\tag{MC}\label{eq:matrix_completion}
\end{equation}
where $Z\in\mathbf{R}^{m\times d}$ is the matrix whose entries $Z_{ij}$
are observable for $(i,j)\in\Omega$. Based on these observed entries,
our goal is to construct a matrix $X\in\mathbf{R}^{m\times d}$ that
has rank $r$. The problem above can be written as a special case
of affine rank minimization problem (\ref{eq:lrm-estimation}). %

\begin{figure}[htp]
\centering
\subfloat[Fixed point gap representing convergence of the iterates vs $r$]{%
\label{fig:Normalized-fixed-point-gap-matrix-comp}%
\begin{tikzpicture}
\begin{axis}[legend style= {at={(0.5,1.3)},anchor=north, font =\tiny}, font=\tiny, ticklabel style = {font=\tiny}, scale only axis, height=3cm, width=0.25*\textwidth, xmajorgrids, ymajorgrids, xlabel={$r$ (rank)}, ylabel={Fixed point gap (log scale)}, ymode={log}]
    \addplot+[no marks, style={{ultra thick}}, color={blue}]
        coordinates {
            (1,1.7877307573188617e-7)
            (3,2.1468138455826846e-7)
            (5,2.2503163021880823e-7)
            (7,2.688980536014185e-7)
            (10,2.756973804807217e-7)
            (20,3.3348647709985357e-7)
            (25,4.809719265086443e-7)
            (30,3.2295530805193096e-7)
            (35,3.3722291625792877e-7)
        }
        ;
    \addlegendentry {NExOS}
    \addplot+[no marks, style={{ultra thick}}, color={red}]
        coordinates {
            (1,0.6456249810101062)
            (3,0.06539917004073625)
            (5,1.6325874776073208)
            (7,0.19483541564013615)
            (10,1.1604699476938969)
            (20,1.533527325184832)
            (25,0.37503776988914583)
            (30,1.5989607944356474)
            (35,0.03480581196158074)
        }
        ;
    \addlegendentry {NCVX}
\end{axis}
\end{tikzpicture}
}
\qquad
\subfloat[Training and test error for \textsf{NExOS} vs $r$]{%
\label{fig:train-vs-test-NExOS}%
\begin{tikzpicture}
\begin{axis}[legend style= {at={(0.5,1.3)},anchor=north, font =\tiny}, font=\tiny, ticklabel style = {font=\tiny}, scale only axis, height=3cm, width=0.25*\textwidth, xmajorgrids, ymajorgrids, xlabel={$r$ (rank)}, ylabel={RMS error}]
    \addplot+[no marks, style={{ultra thick}}, color={blue}]
        coordinates {
            (1,0.9882026033642594)
            (3,0.9819979760922203)
            (5,0.9778998754920383)
            (7,0.9746889190478395)
            (10,0.9707307923981745)
            (20,0.9611455148401141)
            (25,0.9571407976857484)
            (30,0.9534178894852361)
            (35,0.9498602643507843)
        }
        ;
    \addlegendentry {\textsf{NExOS} training error}
    \addplot+[no marks, style={{ultra thick}}, color={red}]
        coordinates {
            (1,0.9886462102018714)
            (3,0.983482376933635)
            (5,0.980571518395447)
            (7,0.9784578474795791)
            (10,0.9761085276960186)
            (20,0.9735470044640683)
            (25,0.9729246790567139)
            (30,0.9725772294265248)
            (35,0.9725833740897399)
        }
        ;
    \addlegendentry {\textsf{NExOS} test error}
\end{axis}
\end{tikzpicture}
}
\qquad
\subfloat[Training and test error for \textsf{NCVX} vs $r$]{%
\label{fig:train-vs-test-NCVX}%
\begin{tikzpicture}
\begin{axis}[legend style= {at={(0.5,1.3)},anchor=north, font =\tiny}, font=\tiny, ticklabel style = {font=\tiny}, scale only axis, height=3cm, width=0.25*\textwidth, xmajorgrids, ymajorgrids, xlabel={$r$ (rank)}, ylabel={RMS error}]
    \addplot+[no marks, style={{ultra thick}}, color={cyan}]
        coordinates {
            (1,0.919529696530222)
            (3,0.8761892997180748)
            (5,0.8458250184207712)
            (7,0.8229431298385127)
            (10,0.7983341006107252)
            (20,0.734566512157295)
            (25,0.7059710488642934)
            (30,0.6860417712850664)
            (35,0.6555054564700517)
        }
        ;
    \addlegendentry {\textsf{NCVX} training error }
    \addplot+[no marks, style={{ultra thick}}, color={magenta}]
        coordinates {
            (1,0.9280343419630657)
            (3,0.8995358793722824)
            (5,0.8850199457500816)
            (7,0.878554176280105)
            (10,0.8755289323825077)
            (20,0.893644732711329)
            (25,0.907513309316959)
            (30,0.9158833755830187)
            (35,0.9331885891042236)
        }
        ;
    \addlegendentry {\textsf{NCVX} test error }
\end{axis}
\end{tikzpicture}
}
\caption{Matrix completion problem: comparison between solutions found by \textsf{NExOS} and  \textsf{NCVX} algorithm by \cite{Diamond2018}. } \label{fig_mat_comp}
\end{figure}

{
\paragraph{Results}

Figure \ref{fig_mat_comp} compares the solutions found by \textsf{NExOS} and \textsf{NCVX}. 

Fig~\ref{fig:Normalized-fixed-point-gap-matrix-comp} plots the normalized fixed point gap of the iterates for both algorithms calculated by $\|X_{\textsf{Alg}}^{\star} - Y_{\textsf{Alg}}^{\star} \|/\|X_\textrm{True}\|$with $\textsf{Alg} \in \{\textsf{NExOS}, \textsf{NCVX}\}$ and $X_{\textsf{Alg}}^{\star}, Y_{\textsf{Alg}}^{\star}$ representing the final iterates produced by the algorithms. This plot shows that \textsf{NCVX} iterates do not converge within a reasonable fixed point gap, whereas \textsf{NExOS} iterates converge for all the instances with a normalized fixed-point gap less than or equal to $10^{-6}$ for each instance. 

Figure \ref{fig:train-vs-test-NExOS} shows the RMS
error of \textsf{NExOS} for the training datatest and test dataset for each value of
rank $r$. The results for training and test datasets are reasonably
similar for each value of $r$. We observe that beyond rank 15, the
reduction in the test error is rather minor and going beyond this
rank provides only diminishing returns, which is a common occurrence
for low-rank matrix approximation \cite[\S 7.1]{lee2016llorma}. Thus
we can choose the optimal rank to be $15$ for all practical purposes. 

Figure \ref{fig:train-vs-test-NCVX} shows the RMS
error of \textsf{NCVX} for the training dataset and test dataset for each value of
rank $r$.  We see that, unlike \textsf{NExOS}, the test error for \textsf{NCVX} keeps increasing with $r$, whereas the training error \textsf{NCVX} is smaller. Here we note that, because \textsf{NCVX} iterates do not reach a reasonable fixed point gap, the training or test error of \textsf{NCVX} may not provide meaningful information. 
}

\subsection{Factor analysis problem\label{subsec:Factor-analysis-problem}}

\paragraph{Problem description}

The factor analysis model with sparse noise (also known as low-rank
factor analysis model) involves decomposing a given positive semidefinite
matrix as a sum of a low-rank positive semidefinite matrix and a diagonal
matrix with nonnegative entries \cite[page 191]{Hastie2015}. It can
be posed as \cite{bertsimas2017certifiably}:
\begin{equation}
\begin{array}{ll}
\textup{minimize} & \left\Vert \Sigma-X-D\right\Vert _{F}^{2}+(\beta/2)\left(\|X\|_{F}^{2}+\|D\|_{F}^{2}\right)\\
\textup{subject to} & D=\mathbf{diag}(d), \quad d\geq0, \quad X\succeq0, \quad \rank(X)\leq r\\
 & \Sigma-D\succeq0, \quad \|X\|_{2}\leq\Gamma,
\end{array}\tag{FA}\label{eq:factor_analysis}
\end{equation}
where $X\in\mathbf{S}^{p}$ and the diagonal matrix $D\in\mathbf{S}^{p}$
with nonnegative entries are the decision variables, and $\Sigma\in\mathbf{S}_{+}^{p}$,
$r\in\ig_{+},$ and $\Gamma\in\rl_{++}$ are the problem data. A proper solution for (\ref{eq:factor_analysis}) requires that both
$X$ and $D$ are positive semidefinite. The term $\Sigma-D$ has to
be positive semidefinite, else statistical interpretations
of the solution is not impossible \cite[page 326]{ten1998some}.

In (\ref{eq:factor_analysis}), we set $\mathcal{X}:=\{(X,D)\in\mathbf{S}^{p}\times\mathbf{S}^{p}\mid\|X\|_{2}\leq\Gamma,\rank(X)\leq r,D=\mathbf{diag}(d),d\geq0\},$
and $f(X,D)\coloneqq\left\Vert \Sigma-X-D\right\Vert _{F}^{2}+I_{\mathcal{P}}(X,D),$where
$I_{\mathcal{P}}$ denotes the indicator function of the convex set
$\mathcal{P}=\{(X,D)\in\mathbf{S}^{p}\times\mathbf{S}^{p}\mid X\succeq0,D=\mathbf{diag}(d),d\geq0,d\in\rl^{p}\}.$
To compute the projection onto $\mathcal{X}$, we use the formula
in \cite[page 14]{Diamond2018} and the fact that $\mathbf{\Pi}_{\{y\mid y\text{\ensuremath{\geq0}}\}}(x)=\max\{x,0\}$,
where pointwise max is used. The proximal operator for $f$ at $(X,D)$
can be computed by solving:

\[
\begin{array}{ll}
\textup{minimize} & \|\Sigma-\widetilde{X}-\widetilde{D}\|_{F}^{2}+(1/2\gamma)\|\widetilde{X}-X\|_{F}^{2}+(1/2\gamma)\|\widetilde{D}-D\|_{F}^{2}\\
\textup{subject to} & \widetilde{X}\succeq0, \quad \widetilde{D}=\mathbf{diag}(\widetilde{d}), \quad \Sigma-\widetilde{D}\succeq0, \quad \widetilde{d}\geq0,
\end{array}
\]
where $\widetilde{X}\in\mathbf{S}_{+}^{p},$ and $\widetilde{d}\in\rl_{+}^{p}$
(\emph{i.e.}, $\widetilde{D}=\mathbf{diag}(\widetilde{d}$)) are the
optimization variables. Now we are in a position to apply\textsf{
NExOS} to this problem.

\paragraph{Comparison with nuclear norm heuristic}

We compare the solution provided by \textsf{NExOS} to that of the
nuclear norm heuristic, which isthe most well-known heuristic
to approximately solve (\ref{eq:factor_analysis}) \cite{saunderson2012diagonal} via following convex relaxation:
\begin{equation}
\begin{array}{ll}
\textup{minimize} & \left\Vert \Sigma-X-D\right\Vert _{F}^{2}+\lambda\left\Vert X\right\Vert _{*}\\
\textup{subject to} & D=\mathbf{diag}(d),\quad d\geq0,\quad X\succeq0,\\
 & \Sigma-D\succeq0,\quad\|X\|_{2}\leq\Gamma,
\end{array}\label{eq:relaxation}
\end{equation}
where $\lambda$ is a positive parameter that is related to the rank
of the decision variable $X$. Note that, as $X$ is positive semidefinite,
we have its nuclear norm $\|X\|_{*}=\mathbf{tr}(X)$. %

\begin{figure}

\centering
\subfloat[\texttt{bfi} objective value]{\includegraphics[width=3.5cm]{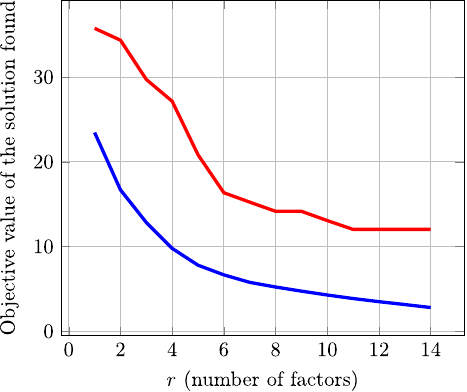}}\hfil
\subfloat[\texttt{neo} objective value]{\includegraphics[width=3.5cm]{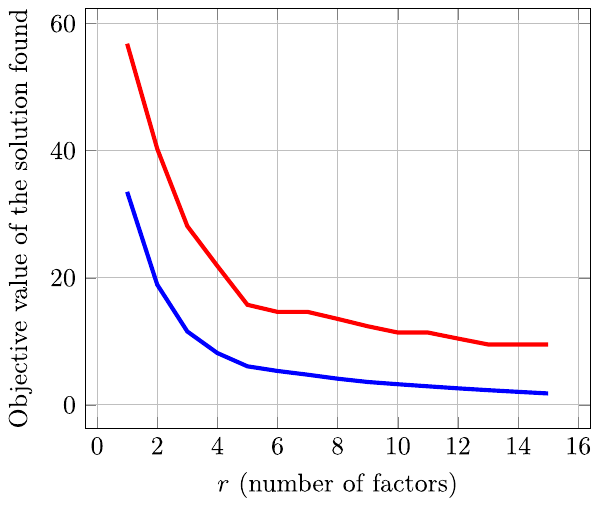}}\hfil 
\subfloat[\texttt{harman} objective value]{\includegraphics[width=3.5cm]{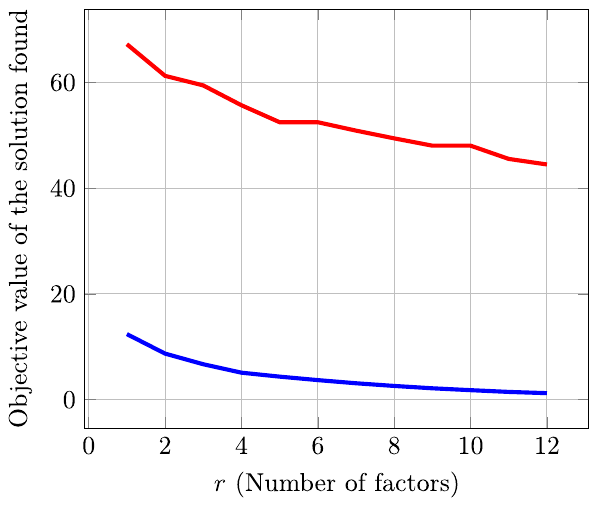}} 

\subfloat[\texttt{bfi} explained variance]{\includegraphics[width=3.5cm]{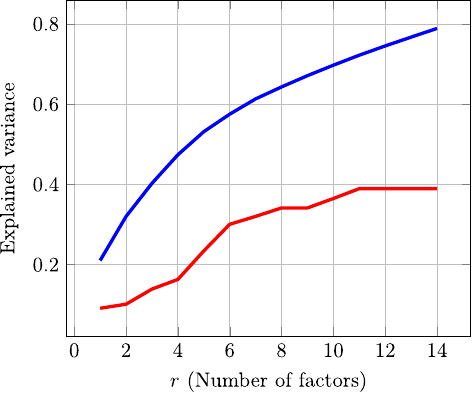}}\hfil   
\subfloat[\texttt{neo} explained variance]{\includegraphics[width=3.5cm]{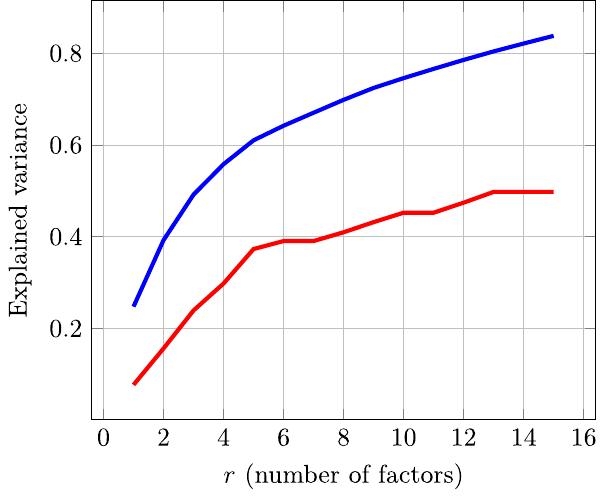}}\hfil
\subfloat[\texttt{harman} explained variance]{\includegraphics[width=3.5cm]{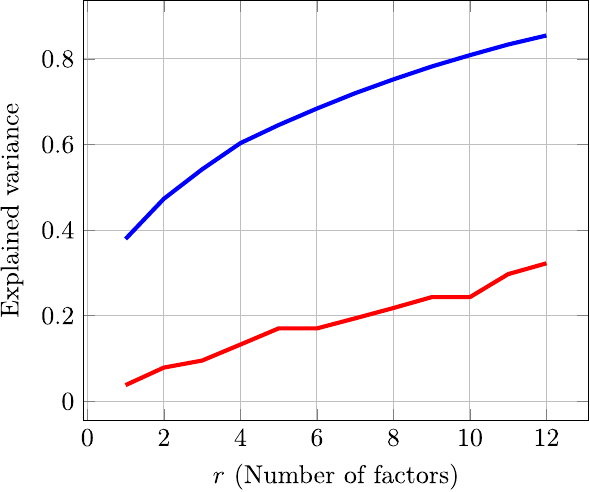}}
\caption{Figure showing performance of \textsf{NExOS} in solving factor analysis
problem for different datasets. Each column represents one dataset. The
first and second row compares training loss and proportion of the
variance explained of the solutions found by \textsf{NExOS} (shown in \textcolor{blue}{blue}) and the
nuclear norm heuristic (shown in \textcolor{red}{red}). \label{fig:Figure-showing-performance}}
\end{figure}

\paragraph{Performance measures}

We consider two performance measures. First, we compare the training
loss $\left\Vert \Sigma-X-D\right\Vert _{F}^{2}$ of the solutions
found by \textsf{NExOS} and the nuclear norm heuristic. As both \textsf{NExOS}
and the nuclear norm heuristic provide a point from the feasible set
of (\ref{eq:factor_analysis}), such a comparison of training losses
tells us which algorithm is providing a better quality solution. Second, we compute the\emph{ proportion of explained variance}, which
represents how well the $r$-common factors explain the residual covariance,\emph{
i.e.}, $\Sigma-D$. For a given $r$, input proportion of variance
explained by the $r$ common factors is given by: $\sum_{i=1}^{r}\sigma_{i}(X)/\sum_{i=1}^{p}\sigma_{i}(\Sigma-D),$
where $X,D$ are inputs, that correspond to solutions found by \textsf{NExOS}
or the nuclear norm heuristic. As $r$ increases, the explained variance
increases to $1$. The higher the value of the explained variance
for a certain solution, the better is the quality of the solution.

\paragraph{Description of the datasets}

We consider three different real-world bench-mark datasets that are
popularly used for factor analysis. 
The \texttt{bfi}, \texttt{neo }, and  \texttt{Harman74} datasets contain (2800 observations, 28 variables), (1000 observations, 30 variables), and (145 observations, 24 variables), respectively.

\paragraph{Setup}

In applying \textsf{NExOS} for the factor analysis problem, we initialize
our iterates with $Z_{0}:=\Sigma$ and $z_{0}:=\mathbf{0}$. All the
other parameters are kept at their default values as stated in the
beginning of \S\ref{sec:Numerical-experiments}. For each dataset,
we vary the number of factors from $1$ to $\lfloor p/2\rfloor$,
where $p$ is the size of the underlying matrix $\Sigma$.

\paragraph{Results}

Figure \ref{fig:Figure-showing-performance} shows performance of
\textsf{NExOS} in solving the factor analysis problem for different
datasets, with each row representing one dataset. The first row compares the training loss of the solution found
by \textsf{NExOS} and the nuclear norm heuristic. We see that for
all the datasets, \textsf{NExOS} finds a solution with a training
loss that is considerably smaller than that of the nuclear norm heuristic. The second row shows the proportion of variance explained by the
algorithms considered for the datasets considered (higher is better).
We see that in terms of the proportion of explained variance, \textsf{NExOS}
delivers larger values than that of the nuclear norm heuristic for
different values of $r$, which is indeed desirable. \textsf{NExOS}
consistently provides solutions with better objective value and explained
variance compared to the nuclear norm heuristic.

\section{Conclusion\label{sec:Conclusion}}
In this paper, we have presented \textsf{NExOS}, a first-order
algorithm to solve optimization problems with convex cost functions
over nonconvex constraint sets--- a problem structure that is satisfied
by a wide range of nonconvex optimization problems including sparse
and low-rank optimization. We have shown that, under mild technical
conditions, \textsf{NExOS} is able to find a locally optimal point
of the original problem by solving a sequence of penalized problems
with strictly decreasing penalty parameters. We have implemented our
algorithm in the \texttt{Julia} package \texttt{NExOS.jl} and have
extensively tested its performance on a wide variety of nonconvex
optimization problems. We have demonstrated that \textsf{NExOS} is able
to compute high quality solutions at a speed that is competitive with
tailored algorithms.

\bibliographystyle{plain}
\bibliography{Manuscript}

\appendix

\section{Proof and derivation to results in $\S$\ref{sec:Introduction}\label{sec:Proofs-to-results-in-Intro}}

\subsection{Lemma regarding prox-regularity of intersection of sets}
\begin{lem}\label{Lem:Prox-regularity-of-X}Consider the nonempty constraint
set $\mathcal{X}=\mathcal{\mathcal{C}\bigcap\mathcal{N}}\subseteq\eu$,
where $\mathcal{C}$ is compact and convex, and $\mathcal{N}$ is
prox-regular at $x\in\mathcal{X}$. Then $\mathcal{X}$ is prox-regular
at $x$.
\end{lem}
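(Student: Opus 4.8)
The plan is to establish the equivalent statement that, by the paper's definition of prox-regularity, the Euclidean projection $\proj$ onto $\mathcal{X}$ is single-valued on a neighborhood of $x$. The first point is that the problem localizes: since $x\in\mathcal{X}$, any $p\in\proj(z)$ obeys $\|p-z\|=d(z,\mathcal{X})\le\|z-x\|$, so every projection of a point $z$ near $x$ lies near $x$, and it suffices to work inside a small ball where I may invoke the prox-regularity data of $\mathcal{N}$. Concretely, prox-regularity of $\mathcal{N}$ at $x$ supplies a radius $r>0$ and a neighborhood $W$ on which the projection $\proj_{\mathcal{N}}$ onto $\mathcal{N}$ is single-valued and on which $\mathcal{N}$ satisfies the quantitative \emph{almost-convexity} estimate $d\bigl(\tfrac{a+b}{2},\mathcal{N}\bigr)\le\|a-b\|^{2}/(8r)$ for $a,b\in\mathcal{N}\cap W$, which is the chord-midpoint form of the external-ball (hypomonotonicity) characterization of prox-regular sets.

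When $x\in\intr\mathcal{C}$ the statement is immediate. Choose $\delta>0$ with $\overline{B}(x;\delta)\subseteq\mathcal{C}$; then for $z\in B(x;\delta/2)$ one has $\|\proj_{\mathcal{N}}(z)-x\|\le d(z,\mathcal{N})+\|z-x\|<\delta$, so $\proj_{\mathcal{N}}(z)\in\overline{B}(x;\delta)\subseteq\mathcal{C}$, hence $\proj_{\mathcal{N}}(z)\in\mathcal{X}$. Since $\mathcal{X}\subseteq\mathcal{N}$, this single point is then also the unique closest point of $\mathcal{X}$ to $z$, giving $\proj(z)=\{\proj_{\mathcal{N}}(z)\}$. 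The remaining—and genuinely delicate—case is $x\in\partial\mathcal{C}$, where the convex constraint is active.

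For $x\in\partial\mathcal{C}$ I argue by contradiction using convexity of $\mathcal{C}$ together with the almost-convexity of $\mathcal{N}$. Suppose $z$ near $x$ has two projections $p_{0},p_{1}\in\proj(z)$ with $D\coloneqq\|p_{0}-p_{1}\|>0$, both at distance $d\coloneqq d(z,\mathcal{X})$. Their midpoint $m=\tfrac12(p_{0}+p_{1})$ lies in $\mathcal{C}$ and satisfies $\|z-m\|^{2}=d^{2}-D^{2}/4$. Applying the almost-convexity estimate with $a=p_{0},b=p_{1}\in\mathcal{N}$ gives $d(m,\mathcal{N})\le D^{2}/(8r)$, while $d(m,\mathcal{C})=0$. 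If I can produce $\widetilde m\in\mathcal{X}$ with $\|m-\widetilde m\|\le C D^{2}$ for a constant $C$ uniform near $x$, then $\|z-\widetilde m\|\le\sqrt{d^{2}-D^{2}/4}+CD^{2}\le d-\tfrac{D^{2}}{8d}+CD^{2}<d$ once $d$ is small enough (which holds for $z$ close to $x$, as $d\to0$), contradicting minimality of $d$ since $\widetilde m\in\mathcal{X}$; hence $D=0$.

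The crux, and the step I expect to be the main obstacle, is exactly the production of $\widetilde m$, i.e. the bound $d(m,\mathcal{X})\le\kappa\,d(m,\mathcal{N})$ for $m\in\mathcal{C}$ near $x$. This is the local linear regularity (subtransversality) of the pair $\{\mathcal{C},\mathcal{N}\}$ at $x$, and it is where convexity of $\mathcal{C}$ must be fused with prox-regularity of $\mathcal{N}$. I would derive it from the normal qualification condition $N_{\mathcal{C}}(x)\cap\bigl(-N_{\mathcal{N}}(x)\bigr)=\{0\}$, under which the intersection of a convex and a prox-regular set is subtransversal and prox-regular with $N_{\mathcal{X}}=N_{\mathcal{C}}+N_{\mathcal{N}}$ locally; convexity of $\mathcal{C}$ enters through its global supporting halfspaces, which control the $\mathcal{C}$-component of any normal with the favorable (monotone) sign. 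The degenerate tangential situation where this qualification fails must be handled separately: there the two sets touch so that $\mathcal{X}$ is locally squeezed into a lower-dimensional prox-regular piece, on which single-valuedness of the projection is trivial. Assembling the interior case, the midpoint contradiction, and this regularity input establishes prox-regularity of $\mathcal{X}$ at $x$, possibly with a reduced radius.
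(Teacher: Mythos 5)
You have correctly located the crux, and everything before it is sound: the localization, the interior case, and the midpoint/parallelogram-law contradiction are all valid, and they reduce the lemma precisely to the local linear-regularity (subtransversality) estimate $d_{\mathcal{X}}(m)\leq\kappa\,d_{\mathcal{N}}(m)$ for $m\in\mathcal{C}$ near $x$. For comparison, the paper's own proof runs on exactly the same fuel: it invokes the Bernard--Thibault intersection theorem, whose hypothesis is the metric calmness estimate $d_{\mathcal{X}}(y)\leq\varsigma\,(d_{\mathcal{C}}(y)+d_{\mathcal{N}}(y))$ near $x$, and then argues that this calmness holds. The genuine gap in your proposal is the final step: when the qualification condition $N_{\mathcal{C}}(x)\cap(-N_{\mathcal{N}}(x))=\{0\}$ fails, you assert that $\mathcal{X}$ is ``locally squeezed into a lower-dimensional prox-regular piece, on which single-valuedness of the projection is trivial.'' That assertion is not an argument, and it is false: in the tangential case the needed regularity can genuinely fail, and with it the conclusion of the lemma itself.

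Concretely, work in $\rl^{2}$ and let $f(t)=t^{6}\sin^{2}(1/t)$ for $t\neq0$, $f(0)=0$, a nonnegative $C^{1,1}$ function (its second derivative is bounded near $0$). Take $\mathcal{N}=\{(t,s)\mid s\geq f(t)\}$, which, being the epigraph of a $C^{1,1}$ function, is prox-regular at every boundary point, in particular at the origin; and take $\mathcal{C}=\{(t,s)\mid s\leq0\}\cap\overline{B}(0;1)$, which is compact and convex. The hypotheses of the lemma hold at $x=(0,0)$, the normal cones satisfy $N_{\mathcal{C}}(x)=-N_{\mathcal{N}}(x)=\{0\}\times\rl_{+}$ (so this is exactly your degenerate case), and $\mathcal{X}=\mathcal{C}\cap\mathcal{N}=\{(1/(k\pi),0)\mid k\in\ig,\ \lvert k\rvert\geq1\}\cup\{(0,0)\}$ is a sequence of isolated points accumulating at the origin. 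This $\mathcal{X}$ is zero-dimensional but not prox-regular at $x$: the midpoint of two consecutive points has two nearest points in $\mathcal{X}$, and such midpoints occur in every neighborhood of the origin. Hence no separate treatment can close your degenerate case: the estimate $d_{\mathcal{X}}\leq\kappa\,d_{\mathcal{N}}$ on $\mathcal{C}$ (equivalently the calmness ratio, which along the points $\bigl((1/(k\pi)+1/((k+1)\pi))/2,\,0\bigr)$ blows up like $k^{4}$ in this example) is an additional transversality-type hypothesis, not a consequence of compactness, convexity, and prox-regularity alone. A complete proof must assume such a condition or verify it for the specific sets of interest (e.g., a norm ball intersected with the sparse or low-rank set, as in the paper's applications), and the same example shows that the paper's own verification of calmness by a compactness/upper-semicontinuity argument does not go through in the tangential case either.
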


\paragraph{Proof to Lemma \ref{Lem:Prox-regularity-of-X}\label{subsec:Proof-to-rank-sparse-reg}}

To prove this result we record the following result from \cite{Bernard2011},
where by $d_{\mathcal{S}}(x)$ we denote the Euclidean distance of
a point $x$ from the set $\mathcal{S}$, and $\overline{\mathcal{S}}$
denotes closure of a set $\mathcal{X}$. 
\begin{lem}[{Intersection of prox-regular sets \cite[Corollary 7.3(a)]{Bernard2011}\label{Lem_Intersection-of-prox-regular}}]
 Let $\mathcal{S}_{1},\mathcal{S}_{2}$ be two closed sets in $\eu,$
such that $\mathcal{S}=\mathcal{S}_{1}\bigcap\mathcal{S}_{2}\neq\emptyset$
and both $\mathcal{S}_{1},\mathcal{S}_{2}$ are prox-regular at $x\in\mathcal{S}$.
If $\mathcal{S}$ is metrically calm at $x$, i.e., if there exist
some $\varsigma>0$ and some neighborhood of $x$ denoted by $\mathcal{B}$
such that $d_{\mathcal{S}}(y)\leq\varsigma(d_{\mathcal{S}_{1}}(y)+d_{\mathcal{S}_{2}}(y))$
for all $y\in\mathcal{B}$, then $\mathcal{S}$ is prox-regular at $x$.
\end{lem}
\begin{proof}
(proof to Lemma \ref{Lem:Prox-regularity-of-X}) By definition, projection
onto $\mathcal{N}$ is single-valued on some open ball $B(x;a)$ with
center $x$ and radius $a>0$ \cite[Theorem 1.3]{PoliRocka2000}.
The set $\mathcal{C}$ is compact and convex, hence projection onto
$\mathcal{C}$ is single-valued around every point, hence single-valued
on $B(x;a)$ as well \cite[Theorem 3.14, Remark 3.15]{Bauschke2017}.
Note that for any $y\in B(x;a)$, $d_{\mathcal{X}}(y)=0$ if and only
if both $d_{\mathcal{C}}(y)$ and $d_{\mathcal{N}}(y)$ are zero.
Hence, for any $y\in B(x;a)\bigcap\mathcal{X}$, the metrically calmness
condition is trivially satisfied.
Next, recalling that the distance from a closed set is continuous
\cite[Example 9.6]{Rockafellar2009}, over the compact set $\overline{B(x;a)\setminus\mathcal{X}}$,
define the function $h$, such that $h(y)=1$ if $y\in\mathcal{X}$, and $h(y)=d_{\mathcal{X}}(y)/(d_{\mathcal{C}}(y)+d_{\mathcal{N}}(y))$ else.
The function $h$ is upper-semicontinuous over $\overline{B(x;a)\setminus\mathcal{X}}$,
hence it will attain a maximum $\varsigma>0$ over $\overline{B(x;a)\setminus\mathcal{X}}$
\cite[Theorem 4.16]{rudin}, thus satisfying the metrically calmness
condition on $B(x;a)\setminus\mathcal{X}$
as well. Hence, using Lemma \ref{Lem_Intersection-of-prox-regular},
the constraint set $\mathcal{X}$ is prox-regular at $x$.
\end{proof}

\section{Proofs and derivations to the results in $\S$\ref{sec:Convergence-analysis}}

\subsection{Modifying \textsf{NExOS }for nonsmooth and convex loss function\label{subsec:nonsmooth_NExOS}}

We now discuss how to modify \textsf{NExOS} when the loss function
is nonsmooth and convex. The key idea is working with  a strongly convex, smooth, and arbitrarily close approximation of $f$; such smoothing techniques are very common in optimization \cite{nesterov2005smooth,beck2017first}. The optimization problem in this case, where
the positive regularization parameter is denoted by $\widetilde{\beta}$,
is given by: $\min_{x}\phi(x)+(\widetilde{\beta}/2)\|x\|^{2}+\iota_\mathcal{X}(x)$,
where the setup is same as problem (\ref{eq:original_problem-1}), except
the function $\phi:\eu\to\mathbf{R}\cup\left\{ +\infty\right\} $
is lower-semicontinuous, proper (its domain is nonempty), and convex. Let $\beta\coloneqq\widetilde{\beta}/2$.
For a $\nu$ that is arbitrarily small, define the following $\beta$
strongly convex and $(\nu^{-1}+\beta)$-smooth function: $f  \coloneqq\prescript{\nu}{}{\phi}(\cdot)+(\beta/2)\|\cdot\|^{2}$
where $\prescript{\nu}{}{\phi}$ is the Moreau envelope of $\phi$ with paramter $\nu$. Following the properties of the Moreau envelope of a convex function discussed in \S\ref{subsec:our_approach}, the following optimization problem acts as an arbitrarily close
approximation to the first nonsmooth convex problem: $\min_{x}f+(\beta/2)\|x\|^{2}+\iota_\mathcal{X}(x)$, which has the same setup as problem (\ref{eq:original_problem-1}).

We can compute $\prox_{\gamma f}(x)$ using the formula in
by \cite[Theorem 6.13, Theorem 6.63]{beck2017first}.
Then, we apply \textsf{NExOS }to $\min_{x}f+(\beta/2)\|x\|^{2}+\iota_\mathcal{X}(x)$ and proceed
in the same manner as discussed earlier. 

\subsection{Proof to Proposition \ref{Attainment-of-local-min}\label{subsec:Proof-to-Lemma-Nov-4}}

\subsubsection{Proof to Proposition \ref{Attainment-of-local-min}(i)}

We prove (i) in three steps. In the\emph{ first step}, we show that
for any $\mu>0$, $f+\regind$ will be differentiable on some $B(\bar{x};r_{\textrm{diff}})$
with $r_{\textrm{diff}}>0$. In the \emph{second step}, we then show
that, for any $\mu\in(0,1/\beta],$ $f+\regind$ will be strongly
convex and differentiable on some $B(\bar{x};r_{\textrm{cvxdiff}})$.
In the \emph{third step}, we will show that there exist $\mu_{\textrm{max}}>0$
such that for any $\mu\in(0,\mu_{\textrm{max}}]$, $\regf+\regind$
will be strongly convex and smooth on some $B(\bar{x};r_{\textrm{max}})$
and will attain the unique local minimum $x_{\mu}$ in this ball.

\paragraph{Proof of the first step}

To prove the first step, we start with the following lemma regarding
differentiability of $\morind$.
\begin{lem}[Differentiability of $\morind$\label{Differentiability-of-morind}]
 Let $\bar{x}$ be a local minimum to problem (\ref{eq:original_problem-1}),
where Assumptions \ref{assum:strong_convex_smooth} and \ref{assum:unique_strongly_convex}
hold. Then there exists some $r_{\textup{diff}}>0$ such that for
any $\mu>0$: (i) the function $\morind$ is differentiable on $B(\bar{x};r_{\textup{diff}})$
with derivative $\nabla\morind=(1/\mu)(\id-\proj),$ and (ii) the projection operator $\proj$ onto $\mathcal{X}$ is single-valued
and Lipschitz continuous on $B(\bar{x};r_{\textup{diff}})$.
\end{lem}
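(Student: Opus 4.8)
The plan is to reduce everything to the squared–distance function and to exploit the fact that the penalty parameter $\mu$ enters $\morind$ only as a harmless multiplicative constant. Concretely, for every $x$ the minimization defining the Moreau envelope, $\min_{y}\{\iota(y)+(1/2\mu)\|y-x\|^{2}\}$, has exactly the same minimizers as $\min_{y\in\mathcal{X}}\|y-x\|^{2}$, independently of $\mu>0$; hence the minimizer is $\proj(x)$ and $\morind(x)=(1/2\mu)\,d^{2}(x)$, where both $d$ and $\proj$ are $\mu$-free (in the notation of \eqref{eq:prox-def}, this is just the identity $\prox_{\mu\iota}=\proj$). This single observation is what will allow one radius $r_{\textup{diff}}$ to serve simultaneously for \emph{all} $\mu>0$.

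First I would establish (ii). Since $\bar{x}$ is a local minimum, the set $\mathcal{X}$ is prox-regular at $\bar{x}$ (Definition \ref{Def:Local-minima-of-P}), so by the characterization of prox-regularity \cite[Theorem 1.3]{PoliRocka2000} there is an open ball $B(\bar{x};a)$ on which $\proj$ is single-valued; the same theory guarantees that, after possibly shrinking the radius to some $r_{\textup{diff}}\in(0,a]$, the projection $\proj$ is Lipschitz continuous on $B(\bar{x};r_{\textup{diff}})$. This is exactly (ii), and neither the ball nor the projection depends on $\mu$.

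Next I would prove (i) on the very same ball. On $B(\bar{x};r_{\textup{diff}})$ the single-valuedness and Lipschitz continuity of $\proj$ upgrade the squared–distance function $x\mapsto d^{2}(x)$ to a continuously differentiable function with $\nabla(d^{2})(x)=2\bigl(x-\proj(x)\bigr)=2(\id-\proj)(x)$, the gradient being Lipschitz because $\proj$ is; this is the local nonconvex analogue of the classical convex identity and follows from the prox-regularity regularity theory \cite{PoliRocka2000} (see also \cite{Rockafellar2009}). Dividing by $2\mu$ and using $\morind=(1/2\mu)\,d^{2}$ then yields $\nabla\morind=(1/\mu)(\id-\proj)$ on $B(\bar{x};r_{\textup{diff}})$ for every $\mu>0$, which is precisely (i).

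The \emph{main obstacle} is the differentiability assertion in (i) together with the exact gradient formula in the nonconvex setting: single-valuedness of $\proj$ alone does not force $d^{2}$ to be differentiable, so one must invoke the deeper prox-regularity result that promotes a single-valued Lipschitz projection to continuous differentiability of $d^{2}$ with the stated gradient. By contrast, the $\mu$-uniformity — a neighborhood and a gradient formula valid for all $\mu$ at once, which at first sight looks like the delicate point — is automatic once the opening identity $\prox_{\mu\iota}=\proj$ is recorded, since $\mu$ merely rescales $d^{2}$.
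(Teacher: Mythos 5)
Your proof is correct, and it rests on the same two pillars as the paper's: the scaling identity $\morind=(1/2\mu)d^{2}$, which makes the radius $\mu$-independent, and the Poliquin--Rockafellar prox-regularity theory \cite{PoliRocka2000}. The difference is the direction in which you traverse that theory. The paper proves (i) first: it quotes \cite[Theorem 1.3(e)]{PoliRocka2000} to get differentiability of $d^{2}$ on a ball around $\bar{x}$ directly from prox-regularity, takes the gradient formula $\nabla d^{2}=2(\id-\proj)$ from the same source, and then obtains (ii) as a consequence of \cite[Lemma 3.2]{PoliRocka2000}, which says that wherever $d^{2}$ is differentiable the projection is single-valued and Lipschitz nearby. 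You instead prove (ii) first---single-valuedness and local Lipschitz continuity of $\proj$ from prox-regularity---and then derive (i) from it, using the fact that a single-valued Lipschitz (indeed, merely continuous) projection on an open set forces $d^{2}$ to be differentiable there with $\nabla d^{2}=2(\id-\proj)$; this ``upgrade'' step, which you correctly flag as the real content of your route, can be verified directly by bounding $d^{2}(x+h)$ from above by $\|x+h-\proj(x)\|^{2}$ and from below via $d^{2}(x)\leq\|x-\proj(x+h)\|^{2}$, with continuity of $\proj$ controlling the error terms. Since these are just two implications of the same equivalence package in \cite{PoliRocka2000}, neither route is more general, and both deliver the same $\mu$-uniform ball; yours makes the elementary mechanism behind the gradient formula visible, while the paper's is shorter because it invokes the implications in the order they are stated in the reference.
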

\begin{proof}
From \cite[Theorem 1.3(e)]{PoliRocka2000}, there exists some $r_{\textrm{diff}}>0$
such that the function $d^{2}$ is differentiable on $B(\bar{x};r_{\textrm{diff}})$.
As $\morind=(1/2\mu)d^{2}$ from (\ref{eq:samaja}), it follows that
for any $\mu>0,$ $\morind$ is differentiable on $B(\bar{x};r_{\textrm{diff}})$
which proves the first part of (i). The second part of (i) follows
from the fact that $\nabla d^{2}(x)=2\left(x-\proj(x)\right)$ whenever
$d^{2}$ is differentiable at $x$ \cite[page 5240]{PoliRocka2000}.
Finally, from \cite[Lemma 3.2]{PoliRocka2000}, whenever $d^{2}$
is differentiable at a point, projection $\proj$ is single-valued
and Lipschitz continuous around that point, and this proves (ii).
\end{proof}
Due to the lemma above, $f+\regind$ will be differentiable on $B(\bar{x};r_{\textrm{diff}})$
with $r_{\textrm{diff}}>0$, as $f$ and $(\beta/2)\|\cdot\|^{2}$
are differentiable. Also, due to Lemma \ref{Differentiability-of-morind}(ii),
projection operator $\proj$ is $\widetilde{L}$-Lipschitz continuous
on $B(\bar{x};r_{\textrm{diff}})$ for some $\widetilde{L}>0$. This proves the first step. 

\paragraph{Proof of the second step}

To prove this step, we are going to record: (1) the notion of general
subdifferential of a function, followed by (2) the definition of prox-regularity
of a function and its connection with prox-regular set, and (3) a
helper lemma regarding convexity of the Moreau envelope under prox-regularity. 
\begin{defn}[Fenchel, Fr\'echet, and general subdifferential]
For any lower-semicontinuous function $h:\rl^{n}\to\rl\cup\{\infty\}$,
its Fenchel subdifferential $\partial h$ is defined as \cite[page 1]{correa1992characterization}: $u\in\partial h(x)\Leftrightarrow h(y)\geq h(x)+\left\langle u\mid y-x\right\rangle$ for all $y \in \rl^{n}$.
For the function $h$, its Fr\'echet subdifferential $\partial^{F}h$
(also known as regular subdifferential) at a point $x$ is defined
as \cite[Definition 2.5]{correa1992characterization}: $u\in\partial^{F}h(x)\Leftrightarrow\liminf_{y\to0}\,(h(x+y)-h(x)-\langle u\mid y\rangle)/\|y\|\geq0$.
Finally, the general subdifferential of $h$, denoted by $\partial^{G}h$,
is defined as \cite[Equation (2.8)]{rockafellar2020characterizing}: $u\in\partial^{G}h(x)\Leftrightarrow u_{n}\to u,x_{n}\to x,f(x_{n})\to f(x),$ for some $(x_{n},u_{n})\in\gra\partial^{F}h$. If $h$ is additionally convex, then $\partial h=\partial^{F}h=\partial^{G}h$ \cite[Property (2.3), Property 2.6]{correa1992characterization}.
\end{defn}

\begin{defn}[{Connection between prox-regularity of a function and a set \cite[Definition 1.1 ]{poliquin1996prox}}]
\label{def:prox-reg-def}A function $h:\mathbf{R}^{n}\to\mathbf{R}\cup\{\infty\}$
that is finite at $\tilde{x}$ is prox-regular at $\tilde{x}$ for
$\tilde{\nu}$, where $\tilde{\nu}\in\partial^{G}h(\tilde{x})$, if
$h$ is locally l.s.c. at $\tilde{x}$ and there exist a distance
$\sigma>0$ and a parameter $\rho>0$ such that whenever $\|x'-\tilde{x}\|<\sigma$
and $\|x-\tilde{x}\|<\sigma$ with $x'\neq x$, $\|h(x)-h(\tilde{x})\|<\sigma$,
$\|\nu-\tilde{\nu}\|<\sigma$ with $\nu\in\partial^{G}h(x)$, we have
$h(x')>h(x)+\left\langle \nu\mid x'-x\right\rangle -(\rho/2)\|x'-x\|^{2}$. Also, a set $\mathcal{S}$ is prox-regular at $\tilde{x}$ for $\tilde{\nu}$
if we have the indicator function $\iota_{\mathcal{S}}$ is prox-regular
at $\tilde{x}$ for $\tilde{\nu}\in\partial^{G}\iota_{\mathcal{S}}(\tilde{x})$
\cite[Proposition 2.11]{poliquin1996prox}. The set $\mathcal{S}$
is prox-regular at $\tilde{x}$ if it is prox-regular at $\tilde{x}$
for all $\tilde{\nu}\in\partial^{G}\iota_{\mathcal{S}}(\tilde{x})$
\cite[page 612]{Rockafellar2009}. 
\end{defn}
We have the following helper lemma from \cite{poliquin1996prox}.
\begin{lem}[{\cite[Theorem 5.2]{poliquin1996prox}}]
\label{thm:Consider-a-function}Consider a function $h$ which is
lower semicontinuous at $0$ with $h(0)=0$ and there exists $\rho>0$
such that $h(x)>-(\rho/2)\|x\|^{2}$ for any $x\neq0$. Let
$h$ be prox-regular at $\tilde{x}=0$ and $\tilde{\nu}=0$ with respect
to $\sigma$ and $\rho$ ($\sigma$ and $\rho$ as described in Definition \ref{def:prox-reg-def}), and let $\lambda\in(0,1/\rho).$ Then, on
some neighborhood of $0$, the function
\begin{equation}
^{\lambda}h+\rho/(2-2\lambda\rho)\|\cdot\|^{2} \label{eq:moreau-fun-new}
\end{equation}
is convex, where $^{\lambda}h$ is the Moreau envelope of $h$ with
parameter $\lambda$. 
\end{lem}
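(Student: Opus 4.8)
The plan is to analyze the proximal mapping $\prox_{\lambda h}(x)=\argmin_{y}\{h(y)+(1/2\lambda)\|y-x\|^{2}\}$ together with its Moreau envelope ${}^{\lambda}h$, and then to prove that $g\coloneqq{}^{\lambda}h+\frac{\rho}{2(1-\lambda\rho)}\|\cdot\|^{2}$ has a monotone gradient on a neighborhood of $0$; since a differentiable function with monotone gradient on a convex set is convex there, this will give the claim. First I would establish a \emph{localization} on which the proximal mapping is well behaved. The growth condition $h(x)>-\frac{\rho}{2}\|x\|^{2}$ for $x\neq0$ together with $\lambda\in(0,1/\rho)$ forces $y\mapsto h(y)+\frac{1}{2\lambda}\|y-x\|^{2}$ to be bounded below, because $1/\lambda>\rho$ makes the quadratic term dominate, so ${}^{\lambda}h(x)$ is finite. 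Invoking prox-regularity of $h$ at $\tilde{x}=0$ for $\tilde{\nu}=0$ with parameters $\sigma,\rho$, I would produce a radius $\delta>0$ so that for every $x\in B(0;\delta)$ the infimum defining ${}^{\lambda}h(x)$ is attained at a point $p\coloneqq\prox_{\lambda h}(x)$ lying in the attentive neighborhood $\{y\mid\|y\|<\sigma,\ |h(y)|<\sigma\}$, with the associated vector $\nu\coloneqq\frac{1}{\lambda}(x-p)$ satisfying $\nu\in\partial^{G}h(p)$ and $\|\nu\|<\sigma$. This localization is the only delicate ingredient and the main obstacle: one must control the magnitudes of $p$, of $\nu$, and of the value $h(p)$ \emph{simultaneously} so that the defining inequality of prox-regularity is applicable at $p$, which is exactly what the $f$-attentive localization machinery of Poliquin and Rockafellar supplies; everything after this point is algebra.

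Granting the localization, I would extract the hypomonotonicity of $\partial^{G}h$. Taking two points $p_{1},p_{2}$ in the localization with subgradients $\nu_{1}\in\partial^{G}h(p_{1})$ and $\nu_{2}\in\partial^{G}h(p_{2})$, I would apply the prox-regularity inequality once with $(x',x)=(p_{2},p_{1})$ and once with $(x',x)=(p_{1},p_{2})$ and add the two; the mixed value terms cancel, leaving
\[
\langle\nu_{1}-\nu_{2}\mid p_{1}-p_{2}\rangle\geq-\rho\|p_{1}-p_{2}\|^{2}.
\]
Writing $u=x_{1}-x_{2}$ and $v=p_{1}-p_{2}$ and substituting $\nu_{i}=\frac{1}{\lambda}(x_{i}-p_{i})$, this rearranges to $\langle u\mid v\rangle\geq(1-\lambda\rho)\|v\|^{2}$. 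Since $1-\lambda\rho>0$, Cauchy--Schwarz then gives the Lipschitz estimate $\|v\|\leq\frac{1}{1-\lambda\rho}\|u\|$, so $\prox_{\lambda h}$ is single-valued and $\frac{1}{1-\lambda\rho}$-Lipschitz on $B(0;\delta)$.

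Finally I would conclude convexity. Because $\prox_{\lambda h}$ is single-valued and Lipschitz on the localization, the standard envelope identity yields that ${}^{\lambda}h$ is differentiable there with $\nabla\,{}^{\lambda}h(x)=\frac{1}{\lambda}(x-\prox_{\lambda h}(x))$, and hence $\nabla g(x)=\frac{1}{\lambda}(x-\prox_{\lambda h}(x))+\frac{\rho}{1-\lambda\rho}x$. Computing the monotonicity defect,
\[
\langle\nabla g(x_{1})-\nabla g(x_{2})\mid u\rangle=\frac{1}{\lambda}\|u\|^{2}-\frac{1}{\lambda}\langle v\mid u\rangle+\frac{\rho}{1-\lambda\rho}\|u\|^{2},
\]
and using $\langle v\mid u\rangle\leq\|v\|\,\|u\|\leq\frac{1}{1-\lambda\rho}\|u\|^{2}$ from the Lipschitz bound, the right-hand side is seen to be nonnegative (the constant $\frac{\rho}{2(1-\lambda\rho)}$ is sharp precisely because equality can occur in this estimate). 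Thus $\nabla g$ is monotone on $B(0;\delta)$, and therefore $g={}^{\lambda}h+\frac{\rho}{2(1-\lambda\rho)}\|\cdot\|^{2}$ is convex on this neighborhood of $0$, as required.
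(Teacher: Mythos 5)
Your proposal is correct and follows essentially the same route as the paper's proof (both being reconstructions of Theorem 5.2 of \cite{poliquin1996prox}): prox-regularity yields hypomonotonicity of the $h$-attentive localization of $\partial^{G}h$, hence a single-valued, $\frac{1}{1-\lambda\rho}$-Lipschitz proximal map and a differentiable Moreau envelope whose gradient plus $\frac{\rho}{1-\lambda\rho}\id$ is monotone, giving convexity on a neighborhood of $0$. The only difference is expository: you inline the algebra (the symmetrized prox-regularity inequality, the Cauchy--Schwarz estimate, and the monotonicity-defect computation) that the paper delegates to citations of \cite{poliquin1996prox}, while both arguments defer the genuinely delicate step---attainment of the prox within the attentive localization, with simultaneous control of $p$, $\nu$, and $h(p)$---to that reference.
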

Now we start proving step 2 earnestly. To prove this result, we assume
$\bar{x}=0$. This does not cause any loss of generality because this
is equivalent to transferring the coordinate origin to the optimal
solution and prox-regularity of a set and strong convexity of a function
is invariant under such a coordinate transformation. 

First, note that the indicator function of our constraint closed set
$\mathcal{X}$ is lower semicontinuous due to \cite[Remark after Theorem 1.6, page 11]{Rockafellar2009},
and as $\bar{x},$ the local minimizer lies in $\mathcal{X},$ we
have $\iota_\mathcal{X}(\bar{x})=0$. The set $\mathcal{X}$ is prox-regular at
$\bar{x}$ for all $\nu\in\partial^{G}\iota_\mathcal{X}(x)$ per our setup, so
using Definition \ref{def:prox-reg-def}, we have $\iota_\mathcal{X}$ prox-regular
at $\bar{x}=0$ for $\bar{\nu}=0\in\partial^{G}\iota_\mathcal{X}(\bar{x})$ (because
$\bar{x}\in\mathcal{X}$, we will have $0$ as a subgradient of $\partial\iota_\mathcal{X}(\bar{x})$) with respect to some distance $\sigma>0$
and parameter $\rho>0$. 

Note that the indicator function satisfies $\iota_\mathcal{X}(x)=c\iota_\mathcal{X}(x)$ for
any $c>0$ due to its definition, so $u\in\partial^{G}\iota_\mathcal{X}(x)\Leftrightarrow cu\in c\partial^{G}\iota_\mathcal{X}(x)=\partial(c\iota_\mathcal{X}^{G}(x))=\partial\iota_\mathcal{X}^{G}(x)$
\cite[Equation 10(6)]{Rockafellar2009} In our setup, we have $\mathcal{X}$
prox-regular at $\bar{x}$. So, setting $h\coloneqq\iota_mathcal{X},\tilde{x}\coloneqq\bar{x}=0,$
$\tilde{\nu}\coloneqq\bar{\nu}=0$, and $\nu\coloneqq u/(\beta/2\rho)$
in Definition \ref{def:prox-reg-def}, we have $\iota_\mathcal{X}$ is also prox-regular
at $\bar{x}=0$ for $\bar{\nu}=0$ with respect to distance $\sigma\min\{1,\beta/2\rho\}$
and parameter $\beta/2$.

Next, because the range of the indicator function is $\{0,\infty\}$,
we have 
$\iota_\mathcal{X}(x)>-(\rho/2)\|x\|^{2}$
for any $x\neq0$. So, we have all the conditions of Theorem \ref{thm:Consider-a-function}
satisfied. Hence, applying Lemma \ref{thm:Consider-a-function},
we have $(1/2\mu)\left(d^{2}+\beta\mu/(2-\beta\mu)\|\cdot\|^{2}\right)$
convex and differentiable on 
\[B\left(\bar{x};\min\left\{ \sigma\min\{1,\beta/2\rho\},r_{\textrm{diff}}\right\} \right)\]
for any $\mu\in(0,2/\beta)$,
where $r_{\textrm{diff}}$ comes from Lemma \ref{Differentiability-of-morind}.
As $r_{\textrm{diff}}$ in this setup does not depend on $\mu$, the
ball %
does not depend on $\mu$ either. Finally, note that in our exterior-point
minimization function we have $\regind=(1/2\mu)\left(d^{2}+\beta\mu\|\cdot\|^{2}\right)$.

So if we take $\mu\leq\frac{1}{\beta}$, then we have $(\beta/2)\mu/\left(1-\mu(\beta/2)\right)\leq\beta\mu$,
and on the ball $B\left(\bar{x};\min\left\{ \sigma\min\{1,\beta/2\rho\},r_{\textrm{diff}}\right\} \right)$,
the function $\regind$ will be convex and differentiable. But $f$
is strongly-convex and smooth, so $f+\regind$ will be strongly convex
and differentiable on $B\left(\bar{x};\min\left\{ \sigma\min\{1,\beta/2\rho\},r_{\textrm{diff}}\right\} \right)$
for $\mu\in(0,1/\beta]$. This proves step 2. 

\paragraph{Proof of the third step}

As point $\bar{x}\in\mathcal{X}$ is a local minimum of problem (\ref{eq:original_problem-1}),
from Definition \ref{Def:Local-minima-of-P}, there is some $r>0$
such that for all $y\in\overline{B}(\bar{x};r),$ we have $f(\bar{x})+(\beta/2)\|\bar{x}\|^{2}<f(y)+(\beta/2)\|y\|^{2}+\ind(y).$

Then, due to the first two steps, for any $\mu\in(0,1/\beta]$, the
function $\regf+\regind$ will be strongly convex and differentiable
on $B\left(\bar{x};\min\left\{ \sigma\min\{1,\beta/2\rho\},r_{\textrm{diff}}\right\} \right)$.
For notational convenience, denote $r_{\textrm{max}}\coloneqq\min\left\{ \sigma\min\{1,\beta/2\rho\},r_{\textrm{diff}}\right\},$
which is a constant. As $\regf+\regind$ is a global underestimator
of and approximates the function $f+(\beta/2)\|\cdot\|^{2}+\ind$
with arbitrary precision as $\mu\to0$, the previous statement and
\cite[Theorem 1.25]{Rockafellar2009} imply that there exist some
$0<\mu_{\textrm{max}}\leq1/\beta$ such that for any $\mu\in(0,\mu_{\textrm{max}}],$
the function $\regf+\regind$ will achieve a local minimum $x_{\mu}$
over $B(\bar{x};r_{\textrm{max}})$ where $\nabla(\regf+\regind)$
vanishes, \emph{i.e.},
\begin{align}
\nabla(\regf+\regind)(x_{\mu}) & =\text{\ensuremath{\nabla}}f(x_{\mu})+\beta x_{\mu}+ (1/\mu)\left(x_{\mu}-\proj\left(x_{\mu}\right)\right)=0\label{eq:optimality condition}\\
\Rightarrow x_{\mu} & =(1/ (\beta \mu + 1)) \left(\proj(x_{\mu})-\mu\nabla f(x_{\mu})\right).\label{eq:x_mu_compact}
\end{align}
As the right hand side of the last equation is a singleton, this minimum
must be unique. Finally to show the smoothness $f+\regind$, for any
$x\in B(\bar{x};r_{\textrm{max}}),$ we have 

\begin{align}
\nabla\left(f+\regind\right)(x) \overset{a)}{=}\nabla f(x)+\left(\beta+ (1/\mu)\right)x- (1/\mu)\proj(x),\label{eq:differentiable_formula}
\end{align}
where $a)$ uses Lemma \ref{Differentiability-of-morind}. Thus, for
any $x_{1},x_{2}\in B(\bar{x};r_{\textrm{max}})$ we have
$\|\nabla(f+(\beta/2)\|\cdot\|^{2}+\morind)(x_{1})-\nabla(f+(\beta/2)\|\cdot\|^{2}+\morind)(x_{2})\|\leq(L+\beta+(1/\mu)+\widetilde{L})\|x_{1}-x_{2}\|$, where we have used the following: $\nabla f$ is $L$-Lipschitz
everywhere due to $f$ being an $L-$smooth function in $\eu$ (\cite[Theorem 18.15]{Bauschke2017}),
and $\proj$ is $\widetilde{L}$-Lipschitz continuous on $B(\bar{x};r_{\textrm{max}})$,
as shown in step 1. This completes the proof for (i).

(ii): Using \cite[Theorem 1.25]{Rockafellar2009}, as $\mu\to0,$
we have $x_{\mu}\to\bar{x},\textrm{ and }\left(\regf+\regind\right)(x_{\mu})\to f(\bar{x})+ (\beta/2)\|\bar{x}\|^{2}.$
Note that $x_{\mu}$ reaches $\bar{x}$ only in limit, as otherwise
Assumption \ref{assum:unique_strongly_convex} will be violated. 

\subsection{Proof to Proposition \ref{Operators-contractive}\label{subsec:Proof-to-Proposition-operators-contractive}}

\subsubsection{Proof to Proposition \ref{Operators-contractive}(i)}
{
We will need the notions of nonexpansive and firmly nonexpansive operators in this proof. An operator $\opA:\eu\to\eu$ is nonexpansive on some set $\mathcal{S}$
if it is Lipschitz continuous with Lipschitz constant $1$ on $\mathcal{S}$; the operator is contractive if the Lipschitz constant is strictly smaller than $1$.
On the other hand, $\opA$ is firmly nonexpansive on $\mathcal{S}$
if and only if its reflection operator $2\opA-\id$ is nonexpansive
on $\mathcal{S}$. A firmly nonexpansive operator is always nonexpansive
\cite[page 59]{Bauschke2017}.}

We next introduce the following definition.

\begin{defn}[{Resolvent and reflected resolvent \label{Resolvent-and-reflected-resolvent}\cite[pages 333, 336]{Bauschke2017}}]
 For a lower-semicontinuous, proper, and convex function $h,$ the
resolvent and reflected resolvent of its subdifferential operator
are defined by $\opJ_{\gamma\partial h}=\left(\id+\gamma\partial h\right)^{-1}$
and $\opR_{\gamma\partial h}=2\opJ_{\gamma\partial h}-\id,$ respectively.
\end{defn}
The proof of (i) is proven in two steps. First, we show that the reflection
operator of $\opT_{\mu},$ defined by
\begin{equation}
\opR_{\mu}=2\opT_{\mu}-\id,\label{eq:DRS-refelction-operator}
\end{equation}
is contractive on $B(\bar{x},r_{\textrm{max}})$, and using this we
show that $\opT_{\mu}$ in also contractive there in the second step.
To that goal, note that $\opR_{\mu}$ can be represented as:
\begin{equation}
\opR_{\mu}=(2\prox_{\gamma\regind}-\id)(2\prox_{\gamma\regf}-\id), \label{eq:alternative-representation-reflection}
\end{equation}
which can be proven by simply using (\ref{eq:NCDRS-operator}) and
(\ref{eq:DRS-refelction-operator}) on the left-hand side and by expanding
the factors on the right-hand side. Now, the operator $2\mathbf{prox}_{\gamma f}-\id$
associated with the $\alpha$-strongly convex and $L$-smooth function
$f$ is a contraction mapping for any $\gamma>0$ with the contraction
factor $\kappa=\max\left\{ (\gamma L-1)/(\gamma L+1),(1-\gamma\alpha)/(\gamma\alpha+1)\right\} \in(0,1)$,
which follows from \cite[Theorem 1]{Giselsson17}. Next, we show that $2\prox_{\gamma\regind}-\id$ is nonexpansive on
$B(\bar{x};r_{\textup{max}})$ for any $\mu\in(0,\mu_{\textrm{max}}]$.
For any $\mu\in(0,\mu_{\textrm{max}}]$, define the function $g$
as follows. We have $g(y)=\regind(y)$ if $y\in B(\bar{x};r_{\textup{max}}),$$g(y)=\liminf_{\tilde{y}\to y}\regind(\tilde{y})$
if $\|y-\bar{x}\|=r_{\textrm{max}}$, and $g(y)=\infty$ else.
The function $g$ is lower-semicontinuous, proper, and convex everywhere due to
\cite[Lemma 1.31 and Corollary 9.10 ]{Bauschke2017}. As a result
for $\mu\in(0,\mu_{\textrm{max}}]$, we have $\mathbf{prox}_{\gamma g}=\opJ_{\gamma\partial g}$
on $\mathbf{E}$ and $\mathbf{prox}_{\gamma g}$ is firmly nonexpansive
and single-valued everywhere, which follows from \cite[Proposition 12.27, Proposition 16.34, and Example 23.3]{Bauschke2017}.
But, for $y\in B(\bar{x};r_{\textup{max}}),$ we have $\regind(y)=g(y)$
and $\nabla\regind(y)=\partial g(y)$. Thus, on $B(\bar{x};r_{\textup{max}}),$
the operator $\prox_{\gamma\regind}=\opJ_{\gamma\nabla\regind}$,
and it is firmly nonexpansive and single-valued for $\mu\in(0,\mu_{\textrm{max}}]$.
Any firmly nonexpansive operator $\opA$ has a nonexpansive reflection
operator $2\opA-\id$ on its domain of firm nonexpansiveness \cite[Proposition 4.2]{Bauschke2017}.
Hence, on $B(\bar{x};r_{\textup{max}}),$ for $\mu\in(0,\mu_{\textrm{max}}]$
the operator $2\prox_{\gamma\regind}-\id$ is nonexpansive using (\ref{eq:alternative-representation-reflection}). 

Now we show that $\opR_{\mu}$ is contractive for every $x_{1},x_{2}\in B(\bar{x};r_{\textup{max}})$
and $\mu\in(0,\mu_{\textrm{max}}]$, we have $\|\opR_{\mu}(x_{1})-\opR_{\mu}(x_{2})\|\leq\|(2\prox_{\gamma\regf}-\id)(x_{1})-(2\prox_{\gamma\regf}-\id)(x_{2})\|\leq\text{\ensuremath{\kappa}}\|x_{1}-x_{2}\|$
where the last inequality uses $\kappa$-contractiveness of $2\prox_{\gamma\regf}-\id$
thus proving that $\opR_{\mu}$ acts as a contractive operator on
$B(\bar{x};r_{\textup{max}})$ for $\mu\in(0,\mu_{\textrm{max}}]$.
Similarly, for any $x_{1},x_{2}\in B(\bar{x};r_{\textup{max}}),$
using (\ref{eq:convenient-form-1}) and the triangle inequality we
have $\|\opT_{\mu}(x_{1})-\opT_{\mu}(x_{2})\|\leq(1+\kappa)/2\|x_{1}-x_{2}\|$
and as $\kappa'=(1+\kappa)/2\in[0,1);$ the operator $\opT_{\mu}$
is $\kappa'-$contractive on on $B(\bar{x};r_{\textup{max}}),$ for
$\mu\in(0,\mu_{\textrm{max}}]$.

\subsubsection{Proof to Proposition \ref{Operators-contractive}(ii)}

Recalling $\opT_{\mu}=(1/2)\opR_{\mu}+(1/2)\id$ from (\ref{eq:DRS-refelction-operator}),
using (\ref{eq:alternative-representation-reflection}), and then
expanding, and finally using Lemma \ref{Lem:computing_complicated_projection}
and triangle inequality, we have for any $\mu,\tilde{\mu}\in(0,\mu_{\textrm{max}}],\;x\in B(\bar{x};r_{\textup{max}}),$ and $y=2\prox_{\gamma\regf}(x)-x$:
\begin{align}\left\Vert \opT_{\mu}(x)-\opT_{\tilde{\mu}}(x)\right\Vert  & \leq\left\Vert \left(\mu/(\gamma+\mu(\beta\gamma+1))-\tilde{\mu}/(\gamma+\tilde{\mu}(\beta\gamma+1))\right)\right\Vert \left\Vert y\right\Vert \nonumber\\
 & +\left\Vert \left(\gamma/(\gamma+\mu(\beta\gamma+1))-\gamma/(\gamma+\tilde{\mu}(\beta\gamma+1))\right)\right\Vert \left\Vert \proj\left(y/(\beta\gamma+1)\right)\right\Vert.\label{eq:inequ_lipschit}
\end{align}

 Now, in (\ref{eq:inequ_lipschit}), the coefficient of $\|y\|$ satisfies $\|\mu/(\gamma+\mu(\beta\gamma+1))-\tilde{\mu}/(\gamma+\tilde{\mu}(\beta\gamma+1))\|\leq(1/\gamma)\|\mu-\tilde{\mu}\|$

and similarly the coefficient of $\|\proj\left(y/(\beta\gamma+1)\right)\|$
satisfies

\begin{align*}\|\gamma/(\gamma+\mu(\beta\gamma+1))-\gamma/(\gamma+\tilde{\mu}(\beta\gamma+1))\| & \leq(\beta+(1/\gamma))\|\mu-\tilde{\mu}\|.\end{align*}
Putting the last two inequalities in (\ref{eq:inequ_lipschit}), and
then replacing $y=2\prox_{\gamma\regf}(x)-x$, we have for any
$x\in\mathcal{B}$, and for any $\mu,\tilde{\mu}\in\rl_{++}$, 
\begin{align}
& \left\Vert \opT_{\mu}(x)-\opT_{\tilde{\mu}}(x)\right\Vert \leq(1/\gamma)\left\Vert \mu-\tilde{\mu}\right\Vert \|y\|+\left(\beta+(1/\gamma)\right)\|\mu-\tilde{\mu}\|\left\Vert \proj\left(y/(\beta\gamma+1)\right)\right\Vert \nonumber\\
= & \{(1/\gamma)\|2\prox_{\gamma\regf}(x)-x\|+(\beta+(1/\gamma))\|\proj((2\prox_{\gamma\regf}(x)-x)/(\beta\gamma+1))\|\}\|\mu-\tilde{\mu}\|.
\label{eq:equal_bound_T_mu}
\end{align}

Now, as $B(\bar{x};r_{\textup{max}})$ is a bounded set and $x\in\mathcal{B}$,
norm of the vector $y=2\prox_{\gamma\regf}(x)-x$ can be upper-bounded
over $B(\bar{x};r_{\textup{max}})$ because $2\prox_{\gamma f}-\id$
is continuous (in fact contractive) as shown in (i). Similarly, $\|\proj\left((2\prox_{\gamma\regf}(x)-x)/(\beta\gamma+1)\right)\|$
can be upper-bounded on $B(\bar{x};r_{\textup{max}})$. Combining the
last two-statements, it follows that there exists some $\ell>0$ such
that 
\[
\sup_{x\in B(\bar{x};r_{\textup{max}})}(1/\gamma)\|2\prox_{\gamma\regf}(x)-x\|+(\beta+1/\gamma)\left\Vert \proj\left((2\prox_{\gamma\regf}(x)-x)/(\beta\gamma+1)\right)\right\Vert \leq\ell,
\]
 and putting the last inequality in (\ref{eq:equal_bound_T_mu}),
we arrive at the claim.

\subsection{Proof to Proposition \ref{thm:fixed_point_relationship}\label{subsec:fixed_point_lemma}}

The structure of the proof follows that of \cite[Proposition 25.1(ii)]{Bauschke2017}.
Let $\mu\in(0,\mu_{\textrm{max}}].$ Recalling Definition \ref{Resolvent-and-reflected-resolvent},
and due to Proposition \ref{Attainment-of-local-min}(i), $x_{\mu}\in B(\bar{x};r_{\textrm{max}})$
satisfies 
\begin{flalign}
 & x_{\mu}=\argmin_{B(\bar{x};r_{\textup{max}})}f(x)+\regind(x)=\mathop{{\bf zer}}(\nabla\regf+\nabla\regind)\nonumber \\
\overset{a)}{\Leftrightarrow} & \left(\exists y\in\mathbf{E}\right)\;x_{\mu}=\opJ_{\gamma\nabla\regind}\opR_{\gamma\nabla\regf}(y)\textrm{ and }x_{\mu}=\opJ_{\gamma\nabla\regf}(y),\label{eq:shapiro-1}
\end{flalign}
where $a)$ uses the facts (shown in the proof to Proposition \ref{Operators-contractive})
that: (i) $\opJ_{\gamma\nabla\regf}$ is a single-valued operator
everywhere, whereas $\opJ_{\gamma\nabla\regind}$ is a single-valued
operator on the region of convexity $B(\bar{x};r_{\textrm{max}})$,
and (ii) $x_{\mu}=\opJ_{\gamma\nabla\regf}(y)$
can be expressed as $x_{\mu}=\opJ_{\gamma\nabla\regf}(y)\Leftrightarrow2x_{\mu}-y=\left(2\opJ_{\gamma\nabla\regf}-\id\right)y=\opR_{\gamma\nabla\regf}(y)$.
Also, using the last expression, we can write the first term of (\ref{eq:shapiro-1})
as $\opJ_{\gamma\nabla\regind}\opR_{\gamma\nabla\regf}(y)=x_{\mu}\Leftrightarrow y\in\mathop{{\bf fix}}(\opR_{\gamma\nabla\regind}\opR_{\gamma\nabla\regf})$.
Because for lower-semicontinuous,
proper, and convex function, the resolvent of the subdifferential
is equal to its proximal operator \cite[Proposition 12.27, Proposition 16.34, and Example 23.3]{Bauschke2017},
we have $\opJ_{\gamma\partial\regf}=\prox_{\gamma\regf}$ with both
being single-valued. Using the last fact along with (\ref{eq:shapiro-1}),
$y\in\mathop{{\bf fix}}(\opR_{\gamma\nabla\regind}\opR_{\gamma\nabla\regf})$, we have 
$x_{\mu}\in\prox_{\gamma\regf}\left(\mathop{{\bf fix}}\left(\opR_{\gamma\nabla\regind}\opR_{\gamma\partial\regf}\right)\right)$,
but $x_{\mu}$ is unique due to Proposition \ref{Attainment-of-local-min},
so the inclusion can be replaced with equality. Thus $x_{\mu}$, satisfies $x_{\mu}=\prox_{\gamma\regf}\left(\mathop{{\bf fix}}\left(\opR_{\gamma\nabla\regind}\opR_{\gamma\partial\regf}\right)\right)$
where the sets are singletons due to Proposition \ref{Attainment-of-local-min}
and single-valuedness of $\prox_{\gamma f}$. Also, because $\opT_{\mu}$
in (\ref{eq:NCDRS-operator}) and $\opR_{\mu}$ in (\ref{eq:DRS-refelction-operator})
have the same fixed point set (follows from (\ref{eq:DRS-refelction-operator})),
using (\ref{eq:alternative-representation-reflection}), we arrive at the claim.

\subsection{Proof to Lemma \ref{thm:convergence_outer_iterations}\label{subsec:Proof-to-outer-iterations}}

(i): This follows directly from the proof to Proposition \ref{Attainment-of-local-min}.

(ii): From Lemma \ref{thm:convergence_outer_iterations}(i), and recalling that $\eta'>1$,
for any $\mu\in(0,\mu_{\textup{max}}]$, we have the first equation. Recalling Definition \ref{Resolvent-and-reflected-resolvent}, and
using the fact that for lower-semicontinuous, proper, and convex function,
the resolvent of the subdifferential is equal to its proximal operator
\cite[Proposition 12.27, Proposition 16.34, and Example 23.3]{Bauschke2017},
we have $\opJ_{\gamma\partial\regf}=\prox_{\gamma\regf}$ with both
being single-valued. So, from Proposition \ref{thm:fixed_point_relationship}: $x_{\mu}=\prox_{\gamma\regf}(z_{\mu})=\left(\id+\gamma\partial f\right)^{-1}(z_{\mu})\Leftrightarrow z_{\mu}=x_{\mu}+\gamma\nabla\regf(x_{\mu})$.
Hence, for any $\mu\in(0,\mu_{\textup{max}}]$:
\begin{align*} & \|z_{\mu}-\bar{x}\|=\|x_{\mu}+\gamma\nabla\regf(x_{\mu})-\bar{x}\|\leq\|x_{\mu}-\bar{x}\|+\gamma\|\nabla f(x_{\mu})\|\\
\Leftrightarrow & r_{\textrm{max}}-\|z_{\mu}-\bar{x}\|\geq r_{\textrm{max}}-\|x_{\mu}-\bar{x}\|-\gamma\|\nabla f(x_{\mu})\|\overset{a)}{\geq}(\eta'-1)r_{\textrm{max}}/\eta'-\gamma\|\nabla f(x_{\mu})\|,
\end{align*}
where $a)$ uses the first equation of Lemma \ref{thm:convergence_outer_iterations}(ii). Because, for the
strongly convex and smooth function $\regf,$ its gradient is bounded
over a bounded set $B(\bar{x};r_{\textup{max}})$ \cite[Lemma 1, \S 1.4.2]{Polyak1987},
then for $\gamma$ satisfying the fourth equation of Lemma \ref{thm:convergence_outer_iterations}(ii) and the definition
of $\psi$ in the third equation of Lemma \ref{thm:convergence_outer_iterations}(ii), we have the second equation of Lemma \ref{thm:convergence_outer_iterations}(ii)
for any $\mu\in(0,\mu_{\textup{max}}].$ To prove the final equation of Lemma \ref{thm:convergence_outer_iterations}(ii),
note that 
\begin{align}
 & \lim_{\mu\to0}\left(r_{\textup{max}}-\|z_{\mu}-\bar{x}\|\right)-\psi\nonumber\\
 & \overset{a)}{=}\lim_{\mu\to0}\left(r_{\textup{max}}-\|x_{\mu}+\gamma\nabla\regf(x_{\mu})-\bar{x}\|\right)-(\eta'-1)r_{\textup{max}}/\eta'+\gamma\;\textup{max}_{x\in B(\bar{x};r_{\textup{max}})}\|\nabla f(x)\|\nonumber\\
 & \overset{b)}{=}\left(r_{\textup{max}}-\|\bar{x}+\gamma\nabla\regf(\bar{x})-\bar{x}\|\right)-(\eta'-1)r_{\textup{max}}/\eta'+\gamma\;\textup{max}_{x\in B(\bar{x};r_{\textup{max}})}\|\nabla f(x)\|\nonumber\\
 & =(1/\eta')r_{\textup{max}}+\gamma\left(\textup{max}_{x\in B(\bar{x};r_{\textup{max}})}\|\nabla f(x)\|-\|\nabla\regf(\bar{x})\|\right)>0,
\label{eq:ineq_5}
\end{align}
 where in $a)$ we have used $z_{\mu}=x_{\mu}+\gamma\nabla\regf(x_{\mu})$ and the third equation of Lemma \ref{thm:convergence_outer_iterations}(ii),
in $b)$ we have used smoothness of $f$ along with Proposition \ref{Attainment-of-local-min}(ii).
Inequality (\ref{eq:ineq_5}) along with the second equation of Lemma \ref{thm:convergence_outer_iterations}(ii)
implies the final equation of Lemma \ref{thm:convergence_outer_iterations}(ii). 

\subsection{Proof to Theorem \ref{Theorem_Main-convergence-result} \label{subsec:Proof-to-Theorem}}

We use the following result from \cite{dontchev2009implicit} in proving
Theorem \ref{Theorem_Main-convergence-result}.
\begin{thm}[{Convergence of local contraction mapping \cite[pp. 313-314]{dontchev2009implicit}}]
\label{Thm:local_contraction_mapping_principle} Let $\opA:\eu\to\eu$
be some operator. If there exist $\tilde{x}$, $\omega\in(0,1)$,
and $r>0$ such that (a) $\opA$ is $\omega$-contractive on $B(\tilde{x};r)$,
i.e., for all $x_{1},x_{2}$ in $B(\tilde{x};r)$,
and (b) $\|\opA(\tilde{x})-\tilde{x}\|\leq(1-\omega)r$. 
Then $\opA$ has a unique fixed point in $B(\tilde{x};r)$ and the
iteration scheme $x_{n+1}=\opA(x_{n})$ with the initialization $x_{0}:=\tilde{x}$
linearly converges to that unique fixed point.
\end{thm}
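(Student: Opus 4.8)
The plan is to prove a localized version of the Banach contraction mapping principle: the only departure from the classical global statement is that the contraction estimate in (a) holds merely on the ball $B(\tilde{x};r)$, so the crux is to guarantee that the Picard iterates $x_{n+1}=\opA(x_n)$, started at $x_0:=\tilde{x}$, never leave this ball. Condition (b) is engineered precisely to make this possible.

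First I would show by induction that every iterate lies in $B(\tilde{x};r)$ and that consecutive differences decay geometrically, namely $\|x_{n+1}-x_n\|\le\omega^n(1-\omega)r$. The base case $n=0$ is exactly hypothesis (b). For the inductive step, assuming $x_0,\dots,x_n\in B(\tilde{x};r)$ together with the stated difference bound for all indices below $n$, the triangle inequality and a geometric sum give
\[
\|x_n-\tilde{x}\|\le\sum_{k=0}^{n-1}\|x_{k+1}-x_k\|\le(1-\omega)r\sum_{k=0}^{n-1}\omega^k=(1-\omega^n)r<r,
\]
so $x_n\in B(\tilde{x};r)$. Then both $x_n$ and $x_{n-1}$ lie in the ball, and the contraction estimate (a) yields $\|x_{n+1}-x_n\|=\|\opA(x_n)-\opA(x_{n-1})\|\le\omega\|x_n-x_{n-1}\|\le\omega^n(1-\omega)r$, closing the induction.

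Next I would establish convergence and the rate. The geometric decay of consecutive differences gives, for $m>n$, the tail bound $\|x_m-x_n\|\le\sum_{k=n}^{m-1}\omega^k(1-\omega)r\le\omega^n r$, so $(x_n)$ is Cauchy; since $\eu$ is finite-dimensional and hence complete, it converges to some $x^\star$ with $\|x^\star-\tilde{x}\|\le r$, and letting $m\to\infty$ in the same bound yields the linear rate $\|x^\star-x_n\|\le\omega^n r$. To identify $x^\star$ as a fixed point I would use that $\opA$ is $\omega$-Lipschitz, hence continuous, on $B(\tilde{x};r)$: passing to the limit in $x_{n+1}=\opA(x_n)$ gives $x^\star=\opA(x^\star)$ whenever $x^\star$ is interior, which is the generic situation. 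Uniqueness in the ball is then immediate, since two fixed points $x^\star,y^\star$ there would satisfy $\|x^\star-y^\star\|=\|\opA(x^\star)-\opA(y^\star)\|\le\omega\|x^\star-y^\star\|$, forcing $x^\star=y^\star$ because $\omega<1$.

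The main obstacle is the invariance step above: the contraction estimate is available only inside $B(\tilde{x};r)$, so the whole argument collapses the instant any iterate escapes, and it is exactly the combination of the geometric-series computation with the calibrated slack in (b) that prevents this. This is the one place where both hypotheses are used jointly. A secondary subtlety is the open-versus-closed-ball distinction for the limit: in the degenerate case where the iterates accumulate on the bounding sphere (collinear equality throughout the triangle inequalities), one reads the conclusion on the closed ball $\overline{B}(\tilde{x};r)$ and invokes the unique $\omega$-Lipschitz extension of $\opA|_{B(\tilde{x};r)}$ to $\overline{B}(\tilde{x};r)$ to pass to the limit; in the application of interest the relevant fixed point is known to be interior, so this refinement is not needed there.
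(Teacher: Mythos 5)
Your proof is correct, but there is nothing inside the paper to compare it against: the paper imports this theorem verbatim from Dontchev and Rockafellar \cite{dontchev2009implicit} and uses it as a black box in the proof of Theorem \ref{Theorem_Main-convergence-result}, offering no proof of its own. Your argument is the standard Picard-iteration proof, essentially what the cited reference does: induction establishing simultaneously the ball-invariance and the geometric decay $\|x_{n+1}-x_{n}\|\le\omega^{n}(1-\omega)r$ (your inductive hypothesis is phrased slightly redundantly, since membership $x_{n}\in B(\tilde{x};r)$ is itself re-derived from the difference bounds, but the logic is sound), then Cauchy-ness with tail bound $\omega^{n}r$ and completeness of $\eu$, identification of the limit by Lipschitz continuity, and uniqueness by the contraction estimate. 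The boundary subtlety you flag is genuine, and it is a defect of the paper's restatement rather than of your proof: with $B(\tilde{x};r)$ open (as the paper defines it) and the non-strict inequality in (b), the fixed point can land exactly on the sphere --- for instance $\eu=\mathbf{R}$, $\opA(x)=\omega x+(1-\omega)r$, $\tilde{x}=0$ satisfies (a) everywhere and (b) with equality, yet its unique fixed point is $x=r\notin B(0;r)$. Dontchev--Rockafellar state the result on the closed ball with the Lipschitz estimate valid there, which removes the problem; note only that the $\omega$-Lipschitz extension you invoke need not coincide with the given $\opA$ on the sphere, so the degenerate case is genuinely covered by the closed-ball hypotheses rather than by the open-ball statement. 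In the paper's actual application the issue is moot: in the proof of Theorem \ref{Theorem_Main-convergence-result} the verification of (b) is strict, since $\|\opT_{\mu_{m}}(\widetilde{z})-\widetilde{z}\|\le\epsilon+\ell\overline{\Delta}<\epsilon+\ell\Delta=(1-\kappa')\psi$ because $\overline{\Delta}=\theta\Delta$ with $\theta\in(0,1)$, and strict slack in (b) forces the limit into the interior, exactly as you observe.
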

Furthermore, recall that \textsf{NExOS} (Algorithm \ref{alg:exterior-point-method})
can be compactly represented using (\ref{eq:convenient-form-1}) as
follows. For any $m\in\{1,2,\ldots,N\}$ (equivalently for each $\mu_{m}\in\{\mu_{1},\ldots,\mu_{N}\}$),
\begin{equation}
\begin{aligned}z_{\mu_{m}}^{n+1} & =\opT_{\mu_{m}}\left(z_{\mu_{m}}^{n}\right),\end{aligned}
\label{eq:ADMM_orig-1-1-1}
\end{equation}
where $z_{\mu_{m}}^{0}$ is initialized at $z_{\mu_{m-1}}$. From Proposition \ref{Operators-contractive}, for any $\mu\in\mathfrak{M},$
the operator $\opT_{\mu}$ is a $\kappa'$-contraction mapping over
the region of convexity $B(\bar{x};r_{\textrm{max}})$, where $\kappa'\in(0,1)$.
From Proposition \ref{Attainment-of-local-min}, there will be a unique
local minimum $x_{\mu}$ of problem \eqref{eq:smoothed-opt} over $B(\bar{x};r_{\textrm{max}})$. Suppose,
instead of the exact fixed point $z_{\mu_{m-1}}\in\fix\opT_{\mu_{m-1}},$
we have computed $\widetilde{z}$, which is an $\epsilon$-approximate
fixed point of $\opT_{\mu_{m-1}}$ in $B(\bar{x};r_{\textrm{max}})$,
i.e., $\ensuremath{\|\widetilde{z}-\opT_{\mu_{m-1}}(\widetilde{z})\|\leq\epsilon}$
and $\|\widetilde{z}-z_{\mu_{m-1}}\|\leq\epsilon$,
where $\epsilon\in [0,\overline{\epsilon})$. Then, we have: 
\begin{equation}
\|\opT_{\mu_{m-1}}(\widetilde{z})-z_{\mu_{m-1}}\|=\|\opT_{\mu_{m-1}}(\widetilde{z})-\opT_{\mu_{m-1}}(z_{\mu_{m-1}})\|\overset{a)}{\leq}\kappa'\underbrace{\|\widetilde{z}-z_{\mu_{m-1}}\|}_{\leq\epsilon}\leq\epsilon, \label{eq:sufficiently_small}
\end{equation}
where $a)$ uses $\kappa'$-contractive nature of $\opT_{\mu_{m-1}}$
over $B(\bar{x};r_{\textrm{max}})$. Hence, using triangle inequality,
\begin{align*}
\|\widetilde{z}-\bar{x}\|\overset{a)}{\leq}\|\widetilde{z}-\opT_{\mu_{m-1}}(\widetilde{z})\|+\|\opT_{\mu_{m-1}}(\widetilde{z})-z_{\mu_{m-1}}\|+\|z_{\mu_{m-1}}-\bar{x}\|\overset{b)}{\leq}2\epsilon+\|z_{\mu_{m-1}}-\bar{x}\|,
\end{align*}
where $a)$ uses triangle inequality and $b)$ uses (\ref{eq:sufficiently_small}).
As $\epsilon\in [0,\overline{\epsilon}),$ where $\overline{\epsilon}$
is defined in (\ref{eq:epsilon_upper_bound}), due to the second equation of Lemma \ref{thm:convergence_outer_iterations}(ii),
we have $r_{\textrm{max}}-\|\widetilde{z}-\bar{x}\|>\psi$.

Define $\Delta=\left((1-\kappa')\psi-\epsilon\right)/\ell,$ which
will be positive due to $\epsilon\in [0,\overline{\epsilon})$
and (\ref{eq:epsilon_upper_bound}). Next, select $\theta\in(0,1)$
such that $\overline{\Delta}=\theta\Delta<\mu_{1},$ hence there exists
a $\rho\in(0,1)$ such that $\overline{\Delta}=(1-\rho)\mu_{1}.$
Now reduce the penalty parameter using 
\begin{equation}
{\mu}_{m} =\mu_{m-1}-\rho^{m-2}\overline{\Delta}=\rho\mu_{m-1}=\rho^{m-1}\mu_{1}
\label{eq:how_mu_is_changed-1}
\end{equation}
for any $m \geq 2$. Next, we initialize the iteration scheme $z_{\mu_{m}}^{n+1}=\opT_{\mu_{m}}\left(z_{\mu_{m}}^{n}\right)$
at $z_{\mu_{m}}^{0}\coloneqq\widetilde{z}.$ Around this initial point,
let us consider the open ball $B(\widetilde{z},\psi)$. For any $x\in B(\widetilde{z};\psi)$,
we have $\|x-\bar{x}\|\leq\|x-\widetilde{z}\|+\|\widetilde{z}-\bar{x}\|<\psi+\|\widetilde{z}-\bar{x}\|<r_{\textrm{max}},$ where the last inequality follows from $r_{\textrm{max}}-\|\widetilde{z}-\bar{x}\|>\psi$. Thus
we have shown that $B(\widetilde{z};\psi)\subseteq B(\bar{x};r_{\textrm{max}})$.
Hence, from Proposition \ref{Operators-contractive}, on $B(\widetilde{z};\psi)$,
the Douglas-Rachford operator $\opT_{\mu_{m}}$ is contractive. Next,
we have $\|\opT_{\mu_{m}}(\widetilde{z})-\widetilde{z}\|\leq(1-\kappa')\psi$, because 
$\|\opT_{\mu_{m}}(\widetilde{z})-\widetilde{z}\|\overset{a)}{\leq}\|\opT_{\mu_{m}}(\widetilde{z})-\opT_{\mu_{m-1}}(\widetilde{z})\|+\|\opT_{\mu_{m-1}}(\widetilde{z})-\widetilde{z}\|\overset{b)}{\leq}\ell\|\mu_{m}-\mu_{m-1}\|+\epsilon\overset{c)}{\leq}\epsilon+\ell\Delta\overset{d)}{\leq}(1-\kappa')\psi,$
where $a)$ triangle inequality, $b)$ uses Proposition \ref{Operators-contractive}(ii)
and $\|\widetilde{z}-\opT_{\mu_{m-1}}(\widetilde{z})\|\leq\epsilon$, $c)$ uses (\ref{eq:how_mu_is_changed-1})
and ~$\|\mu_{m}-\mu_{m-1}\|\leq\overline{\Delta}\leq\Delta$ $d)$ uses the definition of $\Delta$.
Thus, both conditions of Theorem \ref{Thm:local_contraction_mapping_principle}
are satisfied, and $z_{\mu_{m}}^{n}$ in (\ref{eq:ADMM_orig-1-1-1})
will linearly converge to the unique fixed point $z_{\mu_{m}}$of
the operator $\opT_{\mu_{m}}$, and $x_{\mu_{m}}^{n},\text{\ensuremath{y_{\mu_{m}}^{n}}}$
will linearly converge to $x_{\mu_{m}}$. This completes the proof.

\subsection{Proof to Lemma \ref{Convergence-result-for-fixed-mu}\label{subsec:Proof-to-Theorem-fixed_mu}}

First, we show that, for the given initialization of $z_{\textrm{init}}$,
the iterates $z_{\mu_{1}}^{n}$ stay in $\overline{B}(z_{\mu_{1}};\|z_{\textrm{init}}-z_{\mu_{1}}\|)$
for any $n\in\nt$ via induction. The base case is true via given.
Let, $z_{\mu_{1}}^{n}\in\overline{B}(z_{\mu_{1}};\|z_{\textrm{init}}-z_{\mu_{1}}\|)$.
 Then, $\|z_{\mu_{1}}^{n+1}-z_{\mu_{1}}\|\overset{a)}{=}\|\opT_{\mu_{1}}(z_{\mu_{1}}^{n})-\opT_{\mu_{1}}(z_{\mu_{1}})\|\overset{b)}{\leq}\kappa'\|z_{\mu_{1}}^{n}-z_{\mu_{1}}\|\overset{c)}{\leq}\kappa'\|z_{\textrm{init}}-z_{\mu_{1}}\|$,
where $a)$ uses $z_{\mu_{1}}\in\fix\opT_{\mu},$ and $b)$ uses Proposition
\ref{Operators-contractive}, and $c)$ uses $\|z_{\mu_{1}}^{n}-z_{\mu_{1}}\|\leq\|z_{\textrm{init}}-z_{\mu_{1}}\|$. So,
the iterates $z_{\mu_{1}}^{n}$ stay in $\overline{B}(z_{\mu_{1}};\|z_{\textrm{init}}-z_{\mu_{1}}\|)$.
As, $\kappa'\in(0,1)$, this inequality also implies
that $z_{\mu}^{n}$ linearly converges to $z_{\mu}$ with the rate
of at least $\kappa'$. Then using similar reasoning presented in
the proof to Theorem \ref{Theorem_Main-convergence-result}, we have
$x_{\mu}^{n}$ and $y_{\mu}^{n}$ linearly converge to the unique
local minimum $x_{\mu}$ of problem (\ref{eq:smoothed-opt}). This completes
the proof.  

\subsection{Proof to Theorem \ref{Theorem_Main-convergence-result-weak}\label{subsec:Proof-to-Theorem-NExOS-weak}}

The proof is based on the results in \cite[Theorem 4]{li2016douglas} and \cite[Theorem 4.3]{themelis2020douglas}. The function $f$ is $L$-Lipschitz continuous and strongly smooth,
hence $f$ is a coercive function satisfying $\liminf_{\|x\|\to\infty}f(x)=\infty$
and is bounded below \cite[Corollary 11.17]{Bauschke2017}. Also,
$\regind(x)$ is jointly continuous hence lower-semicontinuous in
$x$ and $\mu$ and is bounded below by definition. Let the proximal
parameter $\gamma$ be smaller than or equal to $1/L$. Then due to
\cite[(14), (15) and Theorem 4]{li2016douglas}, $\{x_{\mu}^{n},y_{\mu}^{n},z_{\mu}^{n}\}$ (iterates of the inner algorithm of \textsf{NExOS }for
any penalty parameter $\mu$) will be bounded. This boundedness implies
the existence of a cluster point of the sequence, which allows us
to use \cite[Theorem 4 and Theorem 1]{li2016douglas} to show that for any $z_{\textrm{init}}$, the iterates $x_{\mu}^{n}$ and $y_{\mu}^{n}$ subsequentially converges
to a first-order stationary point  $x_{\mu}$ satisfying
$\nabla\left(f+\regind\right)(x_\mu)=0$. The rate  $\min_{n\leq k}\|\nabla\left(f+\regind\right)(x_{\rho\mu}^{n})\|\leq ((1-\gamma L)/2L) o(1/\sqrt{k})$ is a direct application of \cite[Theorem 4.3]{themelis2020douglas} as our setup satisfies all the conditions to apply it. %

\end{document}